\newtheorem{theorem}{Theorem}[section]
\newtheorem*{theorem*}{Theorem}
\newtheorem{lemma}[theorem]{Lemma}
\newtheorem{proposition}[theorem]{Proposition}
\newtheorem{corollary}[theorem]{Corollary}
\theoremstyle{definition}
\newtheorem{definition}[theorem]{Definition}
\newtheorem{remark}[theorem]{Remark}
\theoremstyle{plain}
\newcommand{\C}{\mathbb{C}}
\newcommand{\R}{\mathbb{R}}
\newcommand{\N}{\mathbb{N}}
\DeclareMathOperator{\Tr}{Tr}
\DeclareMathOperator{\diag}{diag}
\DeclareMathOperator{\Stab}{Stab}
\DeclareMathOperator{\Ad}{Ad}
\DeclareMathOperator{\ad}{ad}
\DeclareMathOperator{\hull}{hull}
\DeclareMathOperator{\spn}{span}
\DeclareMathOperator{\primal}{Prim}
\DeclareMathOperator{\dual}{Dual}
\DeclareMathOperator{\poly}{poly}
\begin{document}

\title{\bf On the Computability of Continuous Maximum Entropy Distributions: Adjoint Orbits of Lie Groups}
\author{Jonathan Leake \\ TU Berlin \and Nisheeth K. Vishnoi \\Yale University}

\maketitle

\newcommand{\eps}{\varepsilon}

\abstract{
Given a point $A$ in the convex hull of a given adjoint orbit $\mathcal{O}(F)$ of a compact Lie group $G$,   we give a polynomial time algorithm to compute the probability density supported on $\mathcal{O}(F)$ whose expectation is $A$ and that minimizes the Kullback-Leibler divergence to the $G$-invariant measure on $\mathcal{O}(F)$. 
This significantly extends the recent work of the authors \cite{leake2020computability} that presented such a result for  the manifold of rank $k$-projections which is a specific adjoint orbit of the unitary group $\mathrm{U}(n)$.
Our result relies on the ellipsoid method-based framework proposed in  \cite{leake2020computability}; however, to apply it to the general setting of compact Lie groups, we need   tools from Lie theory.
For instance, properties of the adjoint representation are used to find the defining equalities of the minimal affine space containing the convex hull of $\mathcal{O}(F)$, and to establish a bound on the optimal dual solution. 
Also, the Harish-Chandra integral formula  \cite{HarishChandra1957} is used to obtain an evaluation oracle for the dual objective function.
While the Harish-Chandra integral formula allows us to write certain integrals over the adjoint orbit of a Lie group as a sum of a small number of determinants, it  is only defined for elements of a chosen Cartan subalgebra of the Lie algebra $\mathfrak{g}$ of $G$.
We show how it can be applied to our setting with the help of  Kostant's convexity theorem \cite{Kostant1973}.
Further, the convex hull of an adjoint orbit is a type of {\em orbitope} \cite{Barvinok05convexgeometry,sanyal2011,biliotti2014}, and the orbitopes studied in this paper are known to be spectrahedral \cite{Kobert}.
Thus our main result can be viewed as extending the maximum entropy framework to a class of spectrahedra.
}

\newpage

\tableofcontents

\newpage

\section{Introduction}
A Lie group is a smooth manifold with a group structure where the group  multiplication and inverse are smooth maps.
Lie groups are widely used in many parts of modern mathematics and physics and include various matrix groups such as   linear groups, unitary groups, orthogonal groups, and symplectic groups that arise in various branches of science and engineering \cite{baker2003matrix,hall2003lie,gilmore2008lie,Edelman1999}.
Recently, Lie groups and algorithmic problems related to them have arisen in various works in theoretical computer science; see, e.g., \cite{christandl2014,MSSconvolutions,GargGOW16, GargGOW17, BurgisserFGOWW18,BurgisserFGOWW19,BLNW}.

Before we explain the problem studied in this paper, we recall some basic notions from Lie theory; see Section \ref{sec:preliminaries} for formal definitions. 
For a Lie group $G$, the tangent  space at its identity element is denoted by $\mathfrak{g}$ (this is a vector space); it inherits a multiplication-like operation (Lie bracket) from $G$ and is called the Lie algebra. 
The {\em adjoint action} ({\em adjoint representation} is also used) of $G$  is the differential of the conjugation action of the group on itself, evaluated at identity.
This can be thought of as a way of representing the elements of  $G$ as linear transformations of $\mathfrak{g}$. 
For instance, if $G$ is the unitary group  $\mathrm{U}(n)$, its Lie algebra $\mathfrak{u}(n)$  consists of $n \times n$ skew-Hermitian matrices and the adjoint action is: 
$$ \Ad_g:\mathfrak{u}(n) \to \mathfrak{u}(n) \ \ \ \ \ \Ad_g(X) \mapsto gXg^{-1}.$$

\noindent
Given an $F \in \mathfrak{g}$, the adjoint orbit of $F$ is defined as the image of $F$ under the adjoint action:
\[
    \mathcal{O}(F) := \bigcup_{g \in G} \{\Ad_g F\}.
\]
$ \mathcal{O}(F)$ is a nonconvex subset of the vector space $\mathfrak{g}$ and we let $\hull( \mathcal{O}(F))$ denote the convex hull of $ \mathcal{O}(F)$.

Associated to every compact Lie group $G$ is a canonical $G$-invariant probability measure called the \emph{Haar measure}, where $G$ acts by multiplication on the left or right.
Because of the invariance properties, we may also refer to this measure as the \emph{uniform measure}.
It follows from Lie theory that, for any given $F$, there is a canonical measure on $\mathcal{O}(F)$ which is the unique (up to scalar) $\Ad$-invariant measure on the orbit.
We denote this measure by $\mu_F$, and we often refer to it as the \emph{uniform measure} on $\mathcal{O}(F)$ due to its invariance properties.

\paragraph{The problem studied in this paper.} For a compact Lie group $G$, given an $F\in \mathfrak{g}$ and a point $A \in \hull( \mathcal{O}(F))$, we consider the problem of computing the probability density supported on $\mathcal{O}(F)$ whose expectation is $A$ and that minimizes the Kullback-Leibler divergence to the invariant measure $\mu_F$ on $\mathcal{O}(F)$. Since, by choice, the measure we seek is as close to
the distribution $\mu_F$ as possible, we refer to it as a maximum entropy distribution.

While the problem of finding the optimal distribution is convex (see Figure \ref{fig:primal_dual}), it is not immediately obvious how to succinctly represent the optimal density function.
For this, consider the dual: 
\begin{equation}\label{eq:1.dual}
 \inf_{Y \in \mathfrak{g}} \ \langle Y,A\rangle + \log \int_{X \in \mathcal{O}(F)} e^{-\langle Y, X \rangle } d\mu_F(X),
\end{equation} 
where $\langle \cdot, \cdot \rangle$  is an inner product on $\mathfrak{g}$ and $\mu_F$ is the given  measure.
If strong duality holds, it can be shown that the optimal distribution $\nu^\star$ to the entropy maximizing problem above can be described by the optimizer $Y^\star$ to the dual above:
$$
 \nu^\star (X) \propto e^{-\langle Y^\star,X \rangle}
$$
   for $X \in \mathcal{O}(F).$
As for computability of $\nu^\star$, $Y^\star$ lives in a small, convex, and finite-dimensional  domain and in principle could be found using the ellipsoid method.
However, bounding the running time of such an   optimization method to find $Y^\star$ reduces to 1) identifying the maximal affine space in which $\mathcal{O}(F)$ lives, 2)   bounding some norm of $Y^\star$ and, 3) coming up with efficient algorithms to compute  
\begin{equation}\label{eq:1.count}
\int_{X \in \mathcal{O}(F)} e^{-\langle Y, X \rangle } d\mu_F(X)
 \end{equation}
 for matrices $Y$ with at most that norm.

\paragraph{A special case.} A special case of the above problem for $\mathrm{U}(n)$ was studied recently \cite{leake2020computability}.
The unitary group $\mathrm{U}(n)$ acts on the real vector space of $n \times n$ Hermitian matrices (which, up to a multiplication by $i$ is its Lie algebra $\mathfrak{u}(n)$) by conjugation.
Thus,  this vector space is partitioned into orbits, with $X$ and $Y$ being in the same orbit if and only if they have the same eigenvalues.
Consider now the matrix $P_k := \diag(1,\ldots,1,0,\ldots,0)$ where $k$ denotes the number of $1$s that appear in the matrix.
Then the orbit $\mathcal{O}(P_k)$ is precisely the set of rank-$k$ projections.
The main result of  \cite{leake2020computability} is an ellipsoid method-based polynomial time algorithm to solve the dual optimization problem in Equation \eqref{eq:1.dual} for a given $A \in \hull \mathcal{O}(P_k)$; see Theorem  \ref{thm:general_algo}.
The algorithm,
roughly speaking, ran in time polynomial in $n$, $\log 1/\eps$, bit complexity of the input, and $\eta^{-1}$ where $\eta$ is a number such that a ball of radius $\eta$ centered at $A$ is contained in $ \hull \mathcal{O}(P_k).$

\renewcommand{\epsilon}{\varepsilon}

\paragraph{Our contribution.} The main result of this paper is an extension of the main result of \cite{leake2020computability} (Theorem \ref{thm:general_algo})  to the setting of adjoint orbits of compact Lie groups; see Theorem \ref{thm:lie_algo}.
Our algorithm uses the ellipsoid method and runs in time polynomial in $\dim(\mathfrak{g})$, $\eta^{-1}$, $\log(\epsilon^{-1})$, and the number of bits needed to represent $A$ and $F$.
Compared to \cite{leake2020computability}, the contributions of this paper are in synthesizing various Lie theoretic tools which are required to apply the ellipsoid method-framework.
In particular, we use the following to do this:
\begin{enumerate}
    \item \textbf{Harish-Chandra integral formula (Theorem \ref{thm:HC}).}
    In \cite{leake2020computability}, the Harish-Chandra-Itzykson-Zuber (HCIZ) integral formula \cite{HarishChandra1957,IZ1980,DuistermaatH1982,Vergne1996,Tao} was needed to compute integrals on orbits of the unitary group.
    The more general Harish-Chandra integral formula,   \cite{HarishChandra1957,mcswiggen2018harish}, discovered while working towards developing a theory of
Fourier analysis on semisimple Lie algebras,  gives a way  to convert an orbital integral such as that of Equation \eqref{eq:1.count} to a sum over the corresponding finite Weyl group.
However,  it is not clear how to compute this latter sum as it may be exponential.
While the HCIZ formula provides a way to represent the summation over the Weyl group as a determinant of a small matrix,  for other Lie groups such formulae are more difficult to obtain and can be found in a recent work of \cite{mcswiggen2018harish}.
We present them in Section \ref{sec:applications}.

    \item \textbf{Kostant convexity theorem (Theorem \ref{thm:kostant}).}
    One issue is that  the Harish-Chandra formula only takes elements from the Cartan subalgebra of $\mathfrak{g}$ as input, while the formula in Equation \eqref{eq:1.count} allows $Y$ to be any element of $\mathfrak{g}$.
    In the case of $P_k$, the Schur-Horn theorem was used to show that the optimal solution to dual optimization problem is a diagonal matrix. 
    Here we appeal to its far-reaching generalization called the Kostant convexity theorem to prove that the optimal solution to Equation \eqref{eq:1.dual} lies in our choice Cartan subalgebra.
    This allows us to efficiently compute the necessary integrals on an adjoint orbit (Equation \eqref{eq:1.count}), leading to a  counting oracle for the ellipsoid method.

    \item \textbf{Adjoint representation (Section \ref{sec:linear_equalities}).}
    To apply the ellipsoid method, one must be able to describe the minimal affine space in which $\mathcal{O}(F)$ lives (see Theorem \ref{thm:general_algo}).
    In the case of $P_k$ studied in \cite{leake2020computability}, this affine space is trivially described as the set of all trace-$k$ Hermitian matrices.
    In the case of adjoint orbits of compact Lie groups however, this description could be more complicated.
    That said, we are able to generally describe this minimal affine space by viewing the span of the adjoint orbit as a representation of the corresponding compact Lie group.
    Reducibility properties of this representation then yield defining linear equalities for the minimal affine space.

    \item \textbf{The Killing form for compact groups.}
    The dual optimization problem (Equation \eqref{eq:1.dual}) inherently relies upon a choice of inner product $\langle \cdot, \cdot \rangle$ for the space in which the maximum entropy measures lie.
    In \cite{leake2020computability}, the Frobenius inner product was chosen seemingly out of convenience.
    But in order to use the Harish-Chandra formula (Theorem \ref{thm:HC}), we need this inner product to be invariant with respect to the adjoint action on the Lie algebra.
    When the Lie group at hand is compact, all such invariant inner products are essentially derived from the Killing form.
    Beyond the Harish-Chandra formula, this invariance property is also crucial for our bounding box arguments and the use of the Kostant convexity theorem.
    
    \item \textbf{Bounding box for adjoint orbits (Section \ref{sec:bounding_box}).}
    In \cite{leake2020computability}, the notion of {\em balancedness} of a measure was introduced to give a nice criterion for existence of a bounding box for the solution to the dual optimization problem.
    Balancedness conceptually means that the measure is roughly evenly spread out.
    In \cite{leake2020computability}, unitary invariance allowed one to view the measures at hand as uniform, which implied balancedness.
    In this paper we can extend this argument by considering more general $\Ad$-invariant measures, which can be extracted from the Haar measure on the associated Lie group.

\end{enumerate}

\noindent
Finally, in Section \ref{sec:applications}, we show how our result can be applied to some  compact Lie groups such as unitary groups ($\mathrm{U}(n)$ and $\mathrm{SU}(n)$), orthogonal groups ($\mathrm{O}(n)$ and $\mathrm{SO}(n)$), and  symplectic groups ($\mathrm{USp}(n)$, the underlying symmetry group for Hamiltonian dynamics). 
For those groups we determine a Cartan subalgebra and an appropriate orthonormal basis with respect to the Killing form.
This allows us to use the determinantal formulas of \cite{mcswiggen2018harish} along with  the Harish-Chandra integral formula to be able to efficiently compute the exponential integral of Equation \ref{eq:1.count}.

While the main result shows how to compute an approximation $Y$ of $Y^\star$ efficiently,   it is an open problem to come up with an efficient  algorithm to sample from the distribution $\nu^\star(X) \propto e^{-\langle Y,X\rangle}$ given $F$ and $Y.$

\paragraph{Connection to orbitopes.} Convex bodies of the form $\hull(\mathcal{O}(F))$ are examples of  {\em orbitopes},  objects introduced in \cite{Barvinok05convexgeometry,sanyal2011}. 
In particular, the orbitopes that arise as adjoint orbits of compact Lie groups fit into the context of {\em coadjoint orbitopes} which have been studied in \cite{biliotti2014}.
Well-studied examples of adjoint orbitopes include:
\begin{itemize}
\item {\em Hermitian Schur-Horn orbitope.}  These arises by letting $G=\mathrm{U}(n)$ and letting $F$ be a diagonal matrix of given eigenvalues $(\lambda_1,\ldots,\lambda_n)$.
\item {\em Skew-symmetric Schur-Horn orbitope.} This arises by letting $G$ be $\mathrm{O}(n)$ or $\mathrm{SO}(n)$ and letting $F$ be the real block diagonal matrix of $2 \times 2$ matrices given by
\[
  \begin{bmatrix}
    0 & \lambda_1 \\
    -\lambda_1 & 0
  \end{bmatrix},
  \begin{bmatrix}
    0 & \lambda_2 \\
    -\lambda_2 & 0
  \end{bmatrix},
  \ldots,
  \begin{bmatrix}
    0 & \lambda_k \\
    -\lambda_k & 0
  \end{bmatrix},
\]
where $k = \lfloor \frac{n}{2} \rfloor$, and with an extra row and column of 0s if $n$ is odd.
\item {\em Density matrices in quantum mechanics.}  These arises by letting $G=\mathrm{U}(n)$ and letting $F$ be the diagonal matrix with the $(1,1)$ entry equal to $1$ and rest $0$.
\end{itemize}
\noindent
Interestingly, in a sequence of results \cite{Parrilo,BiliottiPolar,Kobert,Kobertphdthesis,kobert2020spectrahedral}, it has been established that adjoint orbitopes of compact Lie groups are spectrahedral, making it possible to optimize convex functions over them.
The results of this paper then imply the maximum entropy framework can be extended to a particular class of spectrahedra.

In summary, this paper shows how various results from Lie theory come together to allow for the computability of maximum entropy distributions on compact adjoint orbits. In doing so, our work  contributes to the algorithmic theory of coadjoint orbitopes \cite{Barvinok05convexgeometry,sanyal2011,Parrilo,Kobert} and, more generally, to recent work on algorithms for  optimization problems on Lie groups \cite{GargGOW16, GargGOW17, BurgisserFGOWW18,BurgisserFGOWW19,BLNW}.

\section{Preliminaries} \label{sec:preliminaries}

We let $\C,\R,\R_{\geq 0},\N$ denote the sets of complex numbers, real numbers, non-negative real numbers, and positive integers respectively.
Throughout we use standard vector and matrix notation.
We let $\R^{n \times m}$ and $\C^{n \times m}$ denote the vector spaces of $n \times m$ matrices with real and complex coefficients, respectively.
For $A \in \C^{n \times n}$, we let $\Tr(A)$ denote the trace of $A$.
We will often use $e_j$ to denote the standard basis vector and $E_{j,k}$ to denote the matrix with a one in the $j$th row and $k$th column and zeros elsewhere.

\subsection{Lie theory preliminaries}

We now discuss some basic notions related to Lie groups.
Lie theory in general can be introduced in a number of ways, and here we only discuss notions pertinent to the problems considered in this paper.
Further, we do not go into very much detail, instead pointing the interested reader to any standard Lie theory reference (such as \cite{knapp2013} or \cite{hall2003lie}).

\paragraph{Lie groups and Lie algebras.}

A \emph{(real) Lie group} $G$ is a real $d$-dimensional smooth manifold which is also a group with smooth group operations.
Given a Lie subgroup $H$ of $G$, the \emph{quotient} $G/H$ is a smooth manifold (not necessarily a group) consisting of the left cosets of $H$ in $G$.
As a note, we will almost exclusively work with compact connected Lie groups.

The real $d$-dimensional tangent space of $G$ at the identity is called the \emph{(real) Lie algebra} $\mathfrak{g}$ associated to $G$.
That is, a Lie algebra is a real vector space of the same dimension as the corresponding Lie group.
A Lie algebra is also equipped with a certain bilinear operation called the Lie bracket, and we will discuss this via the $\ad$ action below.

It should be noted that our Lie groups and Lie algebras will often lie in spaces of matrices with complex entries.
Even still, we will consider them to be \emph{real} Lie groups and Lie algebras.
Often (as in the case of the unitary group) this assumption is required, as the Lie groups of interest do not necessarily admit a complex structure.
In any case, this will not affect our computations.

We now give a few examples of Lie groups and Lie algebras which will be of particular importance for us.
First, the groups $GL_n(\C)$ and $GL_n(\R)$ are the Lie groups of complex and real invertible $n \times n$ matrices respectively, where the group action is given by matrix multiplication.
The corresponding Lie algebras are given by $\mathfrak{gl}_n(\C) = \C^{n \times n}$ and $\mathfrak{gl}_n(\R) = \R^{n \times n}$, the vector spaces of all $n \times n$ complex and real matrices respectively.
More generally, we can also consider the Lie group $GL(V)$ and corresponding Lie algebra $\mathfrak{gl}(V)$ for a given vector space $V$.

Any closed subgroup of $GL_n$ is also a Lie group called a \emph{matrix (sub)group}.
(All explicit examples we consider in this paper are matrix groups.)
For example the groups $SL_n(\C) \subset GL_n(\C)$ and $SL_n(\R) \subset GL_n(\R)$ are the matrix subgroups of $n \times n$ matrices with determinant equal to 1.
The Lie algebras corresponding to $SL_n(\C)$ and $SL_n(\R)$, denoted $\mathfrak{sl}_n(\C) \subset \mathfrak{gl}_n(\C)$ and $\mathfrak{sl}_n(\R) \subset \mathfrak{gl}_n(\R)$, are the vector subspaces of traceless matrices.
Finally the groups $\mathrm{U}(n) \subset GL_n(\C)$ and $\mathrm{SU}(n) \subset SL_n(\C)$ are the matrix subgroups of $n \times n$ unitary matrices.
The Lie algebras corresponding to $\mathrm{U}(n)$ and $\mathrm{SU}(n)$, denoted $\mathfrak{u}(n) \subset \mathfrak{gl}_n(\C)$ and $\mathfrak{su}(n) \subset \mathfrak{sl}_n(\C)$, are the vector subspaces of skew-Hermitian matrices.

\paragraph{The adjoint action.}

The Lie group $G$ acts on $\mathfrak{g}$ in a canonical way called the \emph{adjoint action}.
This action, which we will denote by $\Ad_g X$ for $g \in G$ and $X \in \mathfrak{g}$, is defined in general as the differential at the identity of the action of $G$ on itself by conjugation.
The adjoint representation is in particular a Lie group homomorphism
\[
    \Ad: G \to GL(\mathfrak{g}).
\]
Since $\Ad$ is a map of smooth manifolds, one can further consider the differential of $\Ad$ at the identity.
Since $\mathfrak{g}$ can be thought of as the tangent space of $G$ at the identity, from this one obtains a Lie algebra homomorphism
\[
    \ad: \mathfrak{g} \to \mathfrak{gl}(\mathfrak{g}),
\]
given by $\ad_Y(X)$ for $X,Y \in \mathfrak{g}$.
The $\ad$ action is precisely the Lie bracket of $\mathfrak{g}$, and so some authors use the notation $[X, Y] := \ad_X(Y)$.
We will not use this notation here.

In the case that $G$ is a matrix group, we have that $\mathfrak{g}$ is a vector space of matrices and the actions can be given concretely as follows.

\begin{definition}
    Let $G$ be a matrix Lie group; i.e. $G \subseteq GL_n(\C)$ with $G$ closed and $\mathfrak{g} \subseteq \mathfrak{gl}_n(\C)$. Then the adjoint action of $G$ on its Lie algebra $\mathfrak{g}$ is given as
    \[
        \Ad_g X = g X g^{-1}
    \]
    for $g \in G$ and $X \in \mathfrak{g}$. Further, the adjoint action of $\mathfrak{g}$ on itself is given as
    \[
        \ad_X(Y) = XY - YX
    \]
    where $X,Y \in \mathfrak{g}$.
\end{definition}

\noindent
Finally, the \emph{orbit} of some given $F \in \mathfrak{g}$ with respect to the adjoint action is defined as usual as the set
\[
    \mathcal{O}(F) := \bigcup_{g \in G} \{\Ad_g F\},
\]
and the \emph{stabilizer} of $F$ is defined as usual as the subgroup
\[
    \Stab(F) := \{g \in G ~:~ \Ad_g F = F\}.
\]

\begin{remark}
    In this paper we consider adjoint orbits of compact Lie groups, but in the literature \emph{coadjoint orbits} are often considered.
    Coadjoint orbits are orbits of the dual action on $\mathfrak{g}^*$.
    In the case we consider, we have a nondegenerate $\Ad$-invariant bilinear form on $\mathfrak{g}$, and this implies adjoint and coadjoint orbits are equivalent.
    This is also discussed in Section \ref{sec:convexity}.
\end{remark}

\paragraph{The exponential map.}

A key fact which relates $\Ad$ to $\ad$ is that the \emph{exponential map} commutes with these actions.
We describe this in the following result, noting that the exponential map is a generalization of the usual matrix exponential.

\begin{proposition}[\cite{knapp2013}, Proposition 1.93 and Corollary 4.48] \label{prop:Lie_exp}
    If $G$ is a Lie group with Lie algebra $\mathfrak{g}$, then there is a map $\exp: \mathfrak{g} \to G$ such that
    \[
        \Ad_{\exp(X)} = \exp(\ad_X),
    \]
  where  $X \in \mathfrak{g}$.
    If $G$ is compact and connected then $\exp$ is surjective, and if $G$ is a matrix group then $\exp(X)$ is the usual matrix exponential.
\end{proposition}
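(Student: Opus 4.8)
The plan is to build the exponential map from one-parameter subgroups and then deduce the commutation relation from naturality of $\exp$, handling surjectivity separately by Riemannian geometry. First I would construct $\exp$: for $X \in \mathfrak{g}$ let $\tilde{X}$ be the left-invariant vector field on $G$ with $\tilde{X}_e = X$; standard ODE existence and uniqueness gives a unique integral curve $\gamma_X$ with $\gamma_X(0) = e$, and left-invariance together with uniqueness forces $\gamma_X(s+t) = \gamma_X(s)\gamma_X(t)$, so $\gamma_X$ is a homomorphism $\R \to G$ and in particular extends to all of $\R$. Set $\exp(X) := \gamma_X(1)$. Smooth dependence of ODE solutions on initial data makes $\exp$ smooth, and a computation of the differential at $0$ gives $d\exp_0 = \mathrm{id}_{\mathfrak g}$, so $\exp$ restricts to a diffeomorphism near $0$. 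The uniqueness of one-parameter subgroups with prescribed initial velocity is the fact I would reuse repeatedly.

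Next I would prove naturality: for a Lie group homomorphism $\phi : G \to H$ with differential $d\phi : \mathfrak{g} \to \mathfrak{h}$ at the identity, one has $\phi(\exp_G(X)) = \exp_H(d\phi(X))$, because $t \mapsto \phi(\gamma_X(t))$ is a one-parameter subgroup of $H$ with initial velocity $d\phi(X)$, hence equals $t \mapsto \exp_H(t\, d\phi(X))$; evaluating at $t = 1$ gives the claim. Applying this to $\phi = \Ad : G \to GL(\mathfrak{g})$, whose differential at the identity is by definition $\ad : \mathfrak{g} \to \mathfrak{gl}(\mathfrak{g})$, yields $\Ad_{\exp(X)} = \exp_{GL(\mathfrak{g})}(\ad_X)$; since the one-parameter subgroups of $GL(V)$ for a finite-dimensional $V$ are $t \mapsto e^{tA}$, the map $\exp_{GL(\mathfrak g)}$ is the usual operator exponential $\sum_{k\ge 0}\frac{1}{k!}(\cdot)^k$, and the stated identity $\Ad_{\exp(X)} = \exp(\ad_X)$ follows. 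The same naturality argument applied to the inclusion $\iota : G \hookrightarrow GL_n(\C)$ of a matrix group shows $\exp_G$ is the restriction of the matrix exponential, giving the last assertion.

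For surjectivity when $G$ is compact and connected, I would pass to Riemannian geometry: averaging an arbitrary inner product on $\mathfrak{g}$, or equivalently an arbitrary Riemannian metric on $G$, against the (two-sided) Haar measure produces a bi-invariant Riemannian metric on $G$. For a bi-invariant metric the geodesics through $e$ are exactly the one-parameter subgroups (this is where the Levi-Civita connection satisfies $\nabla_{\tilde X}\tilde Y = \tfrac12\,\ad_X(Y)$ on left-invariant fields), so the Lie-theoretic $\exp$ coincides with the Riemannian exponential $\exp^{\mathrm{Riem}}_e$. A compact Riemannian manifold is complete, so by Hopf--Rinow every point of the connected manifold $G$ is joined to $e$ by a minimizing geodesic; hence $\exp^{\mathrm{Riem}}_e = \exp$ is onto.

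I expect the surjectivity clause to be the main obstacle, since it is the one genuinely non-formal ingredient: one must leave the algebraic setting, and the crux is the identification of one-parameter subgroups with the geodesics through the identity, which fails for a merely left-invariant metric and uses bi-invariance essentially. The commutation relation and the matrix-group statement, by contrast, are soft consequences of functoriality of $\exp$ once the map has been constructed.
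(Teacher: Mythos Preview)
Your argument is correct and standard. Note, however, that the paper does not supply its own proof of this proposition: it is stated as a citation to \cite{knapp2013} (Proposition 1.93 and Corollary 4.48) and used as a black box, so there is no in-paper proof to compare against.

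If one compares instead to the cited source, your treatment of the commutation relation $\Ad_{\exp X} = \exp(\ad_X)$ via naturality of $\exp$ under Lie group homomorphisms is exactly the standard argument. For surjectivity, your Riemannian route (bi-invariant metric, geodesics through $e$ are one-parameter subgroups, Hopf--Rinow) is a genuine alternative to the more algebraic proof one finds in Knapp, which goes through the theory of maximal tori: every element of a compact connected Lie group lies in a maximal torus, and $\exp$ is surjective onto tori because they are quotients $\R^k/\Z^k$. Your approach is shorter and self-contained if one is willing to import Hopf--Rinow; the torus approach avoids Riemannian geometry entirely but requires the conjugacy theorem for maximal tori, which is itself a substantial result. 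Either is acceptable here.
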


\noindent
The exponential map also gives another connection between the actions $\Ad$ and $\ad$, which follows from the previous result.

\begin{corollary} \label{cor:exp_deriv}
    Given a Lie group $G$ with Lie algebra $\mathfrak{g}$, we have
    \[
        \ad_X(Y) = \left.\frac{d}{dt} \Ad_{\exp(tX)}(Y)\right|_{t=0}
    \]
    for any $X,Y \in \mathfrak{g}$.
\end{corollary}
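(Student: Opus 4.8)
The plan is to obtain the corollary as an immediate consequence of Proposition \ref{prop:Lie_exp}. First I would replace $X$ by $tX$ in the identity $\Ad_{\exp(X)} = \exp(\ad_X)$, which holds for every element of $\mathfrak{g}$; this gives $\Ad_{\exp(tX)} = \exp(\ad_{tX})$ for all $t \in \R$. Since $\ad \colon \mathfrak{g} \to \mathfrak{gl}(\mathfrak{g})$ is a Lie algebra homomorphism, it is in particular linear, so $\ad_{tX} = t\,\ad_X$, and therefore $\Ad_{\exp(tX)} = \exp(t\,\ad_X)$ as an identity of operators in $GL(\mathfrak{g}) \subseteq \mathfrak{gl}(\mathfrak{g})$.

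Next I would expand the right-hand side using the power series for the exponential of the linear operator $\ad_X$ acting on the finite-dimensional space $\mathfrak{g}$:
\[
    \Ad_{\exp(tX)}(Y) = \exp(t\,\ad_X)(Y) = \sum_{k=0}^{\infty} \frac{t^k}{k!}\, \ad_X^{\,k}(Y).
\]
Because the $k$-th summand has operator norm at most $|t|^k \|\ad_X\|^k / k!$, this series converges absolutely and uniformly for $t$ in any bounded interval, so it may be differentiated term by term in $t$. Differentiating and setting $t = 0$ annihilates every term except the one with $k = 1$, leaving precisely $\ad_X(Y)$, which is the claimed formula.

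The only things to check carefully are that the $\exp$ appearing on the Lie-algebra side of Proposition \ref{prop:Lie_exp} is genuinely the exponential of the linear operator $\ad_X$ in the matrix algebra $\mathfrak{gl}(\mathfrak{g})$ — so that the usual power series and term-by-term differentiation are legitimate — and the linearity $\ad_{tX} = t\,\ad_X$. Neither is a real obstacle, so there is essentially no hard part: the corollary simply records that, having identified $\Ad_{\exp(tX)}$ with the one-parameter subgroup $t \mapsto \exp(t\,\ad_X)$ of $GL(\mathfrak{g})$, its velocity at the identity is the infinitesimal generator $\ad_X$.
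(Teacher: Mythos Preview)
Your proposal is correct and matches the paper's approach: the paper does not give an explicit proof of this corollary, stating only that it ``follows from the previous result'' (Proposition~\ref{prop:Lie_exp}), and your argument is precisely the natural way to extract it from that proposition.
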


\paragraph{Simple, semisimple, and reductive Lie algebras.}

Given a Lie algebra $\mathfrak{g}$, an element $X$ is in the \emph{center} of $\mathfrak{g}$ if for any $Y \in \mathfrak{g}$ we have $\ad_X(Y) = 0$.
(Note that $0$ is always in the center of $\mathfrak{g}$.)
A Lie algebra $\mathfrak{h} \subseteq \mathfrak{g}$ is a \emph{subalgebra} if it is closed under the action of $\ad$.
A Lie algebra $\mathfrak{g}$ is \emph{Abelian} if $\ad_X(Y) = 0$ for all $X,Y \in \mathfrak{g}$.
A Lie algebra $\mathfrak{g}$ is \emph{simple} if it has trivial center and no non-trivial subalgebras.
A Lie algebra $\mathfrak{g}$ is \emph{semisimple} if it can be decomposed as a direct sum of simple Lie algebras.
Finally, a Lie algebra $\mathfrak{g}$ is \emph{reductive} if it can be decomposed as a direct sum of its center and a number of simple Lie algebras; that is, if we can write
\[
    \mathfrak{g} = \mathfrak{z} \oplus \mathfrak{g}_1 \oplus \cdots \oplus \mathfrak{g}_n
\]
where $\mathfrak{z}$ is Abelian and $\mathfrak{g}_i$ is simple for all $i \in [n]$.
We refer to the direct sum of the simple components of a reductive Lie algebra as its \emph{semisimple part}.

The one result we state here will be used throughout this paper, often without mentioning it.

\begin{proposition}[\cite{knapp2013}, Corollary 4.25]
    If $G$ is a compact Lie group, then its Lie algebra $\mathfrak{g}$ is reductive.
\end{proposition}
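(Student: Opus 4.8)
The plan is to exploit compactness exactly once, to produce an $\Ad$-invariant inner product on $\mathfrak{g}$ by averaging; everything after that is linear algebra. Concretely, I would (i) average an arbitrary inner product over the Haar measure to get an $\Ad$-invariant one, (ii) differentiate $\Ad$-invariance to conclude that every operator $\ad_X$ is skew-symmetric for this inner product, (iii) deduce complete reducibility of the $\ad$-action (orthogonal complements of ideals are ideals), and (iv) run a dimension induction that peels off the center and then splits the remainder into orthogonal simple ideals.

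For step (i)--(ii): fix any inner product $\langle\cdot,\cdot\rangle_0$ on the finite-dimensional real vector space $\mathfrak{g}$ and set
\[
  \langle X, Y\rangle \ :=\ \int_G \langle \Ad_g X, \Ad_g Y\rangle_0 \, dg ,
\]
where $dg$ is the (normalized, bi-invariant) Haar measure, which exists and is finite because $G$ is compact. Continuity of $g \mapsto \Ad_g$ and compactness of $G$ make the integrand bounded, so this is well-defined; bilinearity and symmetry are immediate, and positive-definiteness holds because for $X \neq 0$ the function $g \mapsto \langle \Ad_g X, \Ad_g X\rangle_0$ is continuous and strictly positive ($\Ad_g$ being invertible). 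Invariance of Haar measure gives $\langle \Ad_h X, \Ad_h Y\rangle = \langle X, Y\rangle$ for all $h \in G$. Differentiating the identity $\langle \Ad_{\exp(tX)} Y, \Ad_{\exp(tX)} Z\rangle = \langle Y, Z\rangle$ at $t=0$ and using Corollary \ref{cor:exp_deriv} yields
\[
  \langle \ad_X Y, Z\rangle + \langle Y, \ad_X Z\rangle \ =\ 0
  \qquad \text{for all } X,Y,Z \in \mathfrak{g},
\]
i.e. each $\ad_X$ is skew-symmetric with respect to $\langle\cdot,\cdot\rangle$.

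For step (iii)--(iv): call a subspace $\mathfrak{a} \subseteq \mathfrak{g}$ an ideal if $\ad_X \mathfrak{a} \subseteq \mathfrak{a}$ for every $X$. If $\mathfrak{a}$ is an ideal, so is $\mathfrak{a}^\perp$, since for $X \in \mathfrak{g}$, $Y \in \mathfrak{a}^\perp$, $Z \in \mathfrak{a}$ we get $\langle \ad_X Y, Z\rangle = -\langle Y, \ad_X Z\rangle = 0$. Now induct on $\dim\mathfrak{g}$. The center $\mathfrak{z}$ is an ideal, so $\mathfrak{g} = \mathfrak{z} \oplus \mathfrak{z}^\perp$ with both summands ideals; because this is a direct sum of ideals, $\ad$ of an element of one summand annihilates the other, from which one checks $\mathfrak{z}^\perp$ has trivial center. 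If $\mathfrak{z}^\perp$ has no nontrivial proper ideal it is simple and we stop; otherwise it splits as an orthogonal sum of two proper ideals and we recurse on each. Since any ideal of a summand is again an ideal of $\mathfrak{g}$, and since a direct-sum-of-ideals decomposition forces the summands to commute under $\ad$, each terminal piece $\mathfrak{g}_i$ has center inside $\mathfrak{z}$, hence trivial, and no nontrivial proper ideal (else it would split further) — that is, $\mathfrak{g}_i$ is simple (a $1$-dimensional piece cannot occur, as it would be central and hence lie in $\mathfrak{z}^\perp \cap \mathfrak{z} = 0$). Collecting the pieces gives $\mathfrak{g} = \mathfrak{z} \oplus \mathfrak{g}_1 \oplus \cdots \oplus \mathfrak{g}_n$, as required.

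The conceptual heart — and the only place compactness is genuinely used — is the averaging in step (i); the skew-symmetry of $\ad_X$ then does all the work. The one point requiring care is the bookkeeping in the induction: one must verify that the ideals obtained by orthogonal complementation inside a summand are ideals of all of $\mathfrak{g}$, and that the non-splitting pieces really have trivial center; both follow from the single observation that a direct-sum decomposition into ideals makes the summands mutually $\ad$-commuting.
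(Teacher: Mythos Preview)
The paper does not supply its own proof of this proposition: it is quoted verbatim as Corollary 4.25 of \cite{knapp2013} and used as a black box throughout. So there is no in-paper argument to compare against.

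Your proof is correct and is in fact the standard argument (and essentially the one Knapp gives): use compactness once to average up an $\Ad$-invariant inner product, differentiate to get skew-symmetry of $\ad_X$, and then use orthogonal complementation to peel off ideals. The bookkeeping you flag---that an ideal of an ideal summand is an ideal of the whole, and that the terminal pieces have trivial center because any central element would lie in $\mathfrak{z} \cap \mathfrak{z}^\perp$---is handled correctly. One cosmetic remark: the paper's stated definition of ``simple'' (``no non-trivial subalgebras'') is almost certainly a slip for ``no non-trivial ideals''; your argument targets the latter, which is the correct notion and the one the paper actually uses downstream.
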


\paragraph{Cartan subalgebras.}

Let $G$ be a compact connected Lie group, and let $\mathfrak{g}$ be its Lie algebra.
For our purposes, a \emph{Cartan subalgebra} $\mathfrak{h}$ of a Lie algebra $\mathfrak{g}$ is a generalization of the set of diagonal matrices in $\mathfrak{gl}_n(\C)$ or $\mathfrak{u}(n)$.
Cartan subalgebras are typically defined for complex Lie algebras, but in this paper we will identify them with \emph{maximal Abelian subalgebras} of $\mathfrak{g}$.
This identification is warranted, as some authors even use the term Cartan subalgebra to refer to maximal Abelian subalgebras in the compact connected case; see \cite{knapp2013} Section 4.5 (specifically the discussion on pages 200-201).
We discuss maximal Abelian (Cartan) subalgebras further in Section \ref{sec:convexity}.

\paragraph{The Killing form.}

There is a real bilinear form on $\mathfrak{g}$ called the \emph{Killing form} which is invariant with respect to the adjoint action.
Denoting this form by $B(\cdot, \cdot)$, this means
\[
    B(\Ad_g X, \Ad_g Y) = B(X, Y) \qquad \text{and} \qquad B(\ad_Z(X), Y) + B(X, \ad_Z(Y)) = 0
\]
for all $g \in G$ and $X,Y,Z \in \mathfrak{g}$.
When the Lie group $G$ is compact, we can say more.

\begin{proposition} \label{prop:Killing_orthogonal}
    Let $B$ be the Killing form on a given Lie algebra $\mathfrak{g}$ associated to a compact Lie group.
    If $\mathfrak{g}$ is simple, then $B$ is negative definite.
    If $\mathfrak{g}$ is reductive, then $B$ is a direct sum of the respective Killing forms on its simple components and the trivial form ($B = 0$) on its center.
    In particular, $B$ is negative semidefinite.
\end{proposition}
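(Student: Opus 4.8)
The plan is to prove Proposition \ref{prop:Killing_orthogonal} by reducing the reductive case to the simple case, and then to establish the simple case via the structure of compact Lie groups. First I would handle the reductive decomposition: if $\mathfrak{g} = \mathfrak{z} \oplus \mathfrak{g}_1 \oplus \cdots \oplus \mathfrak{g}_n$ with $\mathfrak{z}$ Abelian and each $\mathfrak{g}_i$ simple, I would observe that this is a decomposition into $\ad$-ideals, and the key structural fact is that the Killing form $B_{\mathfrak{g}}$ of $\mathfrak{g}$ restricted to an ideal $\mathfrak{a}$ coincides with the Killing form $B_{\mathfrak{a}}$ of $\mathfrak{a}$ as a Lie algebra in its own right, and that distinct summands are $B$-orthogonal. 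The orthogonality follows because for $X \in \mathfrak{g}_i$ and $Y \in \mathfrak{g}_j$ with $i \neq j$, the operator $\ad_X \ad_Y$ maps $\mathfrak{g}_k$ into $\mathfrak{g}_i \cap \mathfrak{g}_j = 0$ for every $k$ (using that $\ad_X$ and $\ad_Y$ preserve each summand and that $[\mathfrak{g}_i,\mathfrak{g}_j]=0$), hence has zero trace; similarly any pairing with $\mathfrak{z}$ vanishes since $\ad_Z = 0$ for $Z \in \mathfrak{z}$, which also gives $B|_{\mathfrak{z}} = 0$. So $B = 0 \oplus B_{\mathfrak{g}_1} \oplus \cdots \oplus B_{\mathfrak{g}_n}$, and negative semidefiniteness of $B$ follows immediately once we know each $B_{\mathfrak{g}_i}$ is negative definite.

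Next I would prove the simple, compact case: $B$ is negative definite. The standard argument uses the existence of an $\Ad$-invariant inner product. Since $G$ is compact, one can average any inner product on $\mathfrak{g}$ over $G$ using the Haar measure to produce an inner product $\langle \cdot,\cdot\rangle$ satisfying $\langle \Ad_g X, \Ad_g Y\rangle = \langle X, Y\rangle$ for all $g$; differentiating (via Corollary \ref{cor:exp_deriv}) yields $\langle \ad_Z X, Y\rangle + \langle X, \ad_Z Y\rangle = 0$, i.e. each $\ad_Z$ is skew-symmetric with respect to this inner product. Consequently $\ad_Z$ is diagonalizable over $\C$ with purely imaginary eigenvalues, so $\ad_Z^2$ is symmetric negative semidefinite, and $B(Z,Z) = \Tr(\ad_Z^2) = -\sum_j |\lambda_j|^2 \le 0$, with equality iff $\ad_Z = 0$, i.e. iff $Z$ is central. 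Since $\mathfrak{g}$ is simple it has trivial center, so $B(Z,Z) < 0$ for all $Z \neq 0$, giving negative definiteness. (Strictly, to conclude $B$ is nondegenerate on a semisimple algebra one could also invoke Cartan's criterion, but the compact-averaging argument is self-contained and gives the sharper definiteness statement directly.)

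Finally I would assemble the pieces: the reductive statement about the direct-sum decomposition of $B$ was established in the first paragraph, each simple-component form is negative definite by the second paragraph, the form on the center is identically zero, and a direct sum of negative definite forms with a zero form is negative semidefinite — which is the last assertion. I expect the main obstacle to be the bookkeeping in the first paragraph: carefully justifying that the Killing form of $\mathfrak{g}$ restricts to the intrinsic Killing form on each ideal and that the cross terms vanish, since this requires being a little careful about the distinction between $\ad$ as an operator on $\mathfrak{g}$ versus on the summand. Everything else is the textbook compactness-averaging argument, which the excerpt has effectively set up by recording the invariance of the Killing form, reductivity of $\mathfrak{g}$, and the $\exp$/$\ad$ relations.
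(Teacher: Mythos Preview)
Your argument is correct and is the standard textbook proof. Note, however, that the paper does not actually supply a proof of this proposition: it is stated in the preliminaries as a known structural fact about compact Lie algebras (with the reader implicitly referred to standard references such as \cite{knapp2013}), and is then used as a black box throughout. So there is no ``paper's own proof'' to compare against; you have simply filled in the omitted justification, and done so correctly via the usual two steps --- orthogonality of ideals under the Killing form together with $B|_{\mathfrak{z}}=0$, and the Haar-averaging/skew-symmetry argument for negative definiteness on each simple factor.
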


\paragraph{Invariant measures.}

Associated to every compact Lie group $G$ is a canonical $G$-invariant probability measure called the \emph{Haar measure}, where $G$ acts by multiplication on the left or right.
Because of the invariance properties, we may also refer to this measure as the \emph{uniform measure}.
We will often be integrating against this measure, and we will use the notation $\int_G f(g) dg$ to denote this.

Given an $F \in \mathfrak{g}$, we also consider measures on the orbit $\mathcal{O}(F)$ of the adjoint action of $G$.
For any given $F$, there is a canonical measure on $\mathcal{O}(F)$ which is the unique (up to scalar) $\Ad$-invariant measure on the orbit.
This is not a trivial fact, but is given in the following.

\begin{proposition}[see \cite{audin2012}, II.3.c and II.1.b]
    Let $G$ be a compact Lie group with Lie algebra $\mathfrak{g}$ and $\Ad$-invariant inner product $\langle \cdot, \cdot \rangle$. For any $F \in \mathfrak{g}$, there exists an $\Ad$-invariant measure $\mu_F$ supported on $\mathcal{O}(F)$.
\end{proposition}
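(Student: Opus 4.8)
The plan is to build $\mu_F$ explicitly as a pushforward of the Haar measure along the orbit map and then verify invariance directly. Let $\pi\colon G\to\mathfrak{g}$ be the map $\pi(g):=\Ad_g F$. Since $\Ad\colon G\to GL(\mathfrak{g})$ is a smooth homomorphism, $\pi$ is smooth (in particular continuous), and by definition its image is exactly $\mathcal{O}(F)$. Let $dg$ denote the Haar probability measure on $G$, which exists because $G$ is compact and is both left- and right-invariant (we will only use left-invariance). I would then set $\mu_F:=\pi_\ast(dg)$, i.e. $\mu_F(S):=dg\big(\pi^{-1}(S)\big)$ for Borel $S\subseteq\mathfrak{g}$; this is automatically a Borel probability measure on $\mathfrak{g}$.

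The first thing to check is that $\mu_F$ is genuinely supported on $\mathcal{O}(F)$. Since $G$ is compact and $\pi$ is continuous, $\mathcal{O}(F)=\pi(G)$ is a compact, hence closed, subset of $\mathfrak{g}$; and $\mu_F(\mathcal{O}(F))=dg\big(\pi^{-1}(\mathcal{O}(F))\big)=dg(G)=1$, so the support of $\mu_F$ is contained in $\mathcal{O}(F)$, as required. Note that the $\Ad$-invariant inner product plays no role in this construction; it is only relevant later, when integrating against $\mu_F$.

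Next comes the $\Ad$-invariance. Fix $g_0\in G$; the map $\Ad_{g_0}$ is a linear automorphism of $\mathfrak{g}$ carrying $\mathcal{O}(F)$ onto itself, so the statement $(\Ad_{g_0})_\ast\mu_F=\mu_F$ makes sense. The key identity is $\Ad_{g_0}\circ\pi=\pi\circ\lambda_{g_0}$, where $\lambda_{g_0}\colon G\to G$ is left translation $g\mapsto g_0 g$; indeed $\Ad_{g_0}(\Ad_g F)=\Ad_{g_0 g}F$ because $\Ad$ is a homomorphism. Applying pushforwards and using left-invariance of $dg$ (that is, $(\lambda_{g_0})_\ast\,dg=dg$) gives
\[
    (\Ad_{g_0})_\ast\mu_F=(\Ad_{g_0}\circ\pi)_\ast\,dg=(\pi\circ\lambda_{g_0})_\ast\,dg=\pi_\ast\big((\lambda_{g_0})_\ast\,dg\big)=\pi_\ast\,dg=\mu_F.
\]
Since $g_0$ was arbitrary, $\mu_F$ is $\Ad$-invariant, and the construction is complete.

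I do not expect a serious obstacle here: the only points requiring care are well-definedness of the pushforward (continuity of $\pi$) and the fact that $\mathcal{O}(F)$ is closed so that ``supported on $\mathcal{O}(F)$'' is meaningful, both of which follow from compactness of $G$. The genuinely harder statement — uniqueness of $\mu_F$ up to scaling, alluded to in the surrounding text but not asserted in this proposition — would instead require identifying $\mathcal{O}(F)$ with the homogeneous space $G/\Stab(F)$ and invoking uniqueness of invariant measures on homogeneous spaces of compact groups; that is not needed for the existence claim we are proving.
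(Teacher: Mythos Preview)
Your proof is correct. The paper does not actually supply its own proof of this proposition; it cites \cite{audin2012} and treats the existence of $\mu_F$ as a black box. So there is no ``paper's proof'' to compare against in the strict sense.

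That said, the paper's implicit viewpoint on $\mu_F$ (visible later in Section~\ref{sec:HC}) goes through the homogeneous-space identification $\mathcal{O}(F)\cong G/\Stab(F)$ and Theorem~\ref{thm:helgason}, which furnishes a unique $G$-invariant measure on $G/H$ for closed $H\subseteq G$. Your route is more elementary: you bypass the quotient entirely, push the Haar measure forward along the orbit map $g\mapsto\Ad_g F$, and read off $\Ad$-invariance from left-invariance of $dg$ and the homomorphism property of $\Ad$. This is cleaner for bare existence and requires no appeal to homogeneous-space theory; the trade-off is that the quotient picture is what one would want anyway for the uniqueness statement you correctly flag as separate, and it is what the paper eventually uses to rewrite orbit integrals as group integrals. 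Your closing remark about uniqueness is accurate and well judged.
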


\noindent
The measure $\mu_F$ is also unique (up to positive scalar), and we often refer to it as the \emph{uniform measure} on $\mathcal{O}(F)$ due to its invariance properties.
In Section \ref{sec:HC}, we discuss the connection between uniform measures on adjoint orbits and the Haar measure on $G$.

\section{Our Framework and Results}

\subsection{The Maximum Entropy Framework}

In this section we discuss the maximum entropy convex program for adjoint orbits.
Fix a compact connected real Lie group $G$ with corresponding Lie algebra $\mathfrak{g}$ equipped with an $\Ad$-invariant inner product $\langle \cdot, \cdot \rangle$. 
Further fix any $F \in \mathfrak{g}$, and let $\mathcal{O}(F) \subset \mathfrak{g}$ denote the corresponding adjoint orbit.
Let $\mu_F$ be the $\Ad$-invariant measure on $\mathcal{O}(F)$, and let $\mathcal{L}(X)=b$ denote the minimal affine space containing $\hull(\mathcal{O}(F))$.
Given $A \in \hull(\mathcal{O}(F))$, our goal is to find a density function $\nu$ with expectation $A$ that minimizes the KL-divergence with respect to $\mu_F$.

\begin{figure*}
    \centering
    \[
    \begin{array}[t]{c|c}
        \quad\quad
        \bf Primal
        \quad\quad\quad
        &
        ~
        \bf Dual
        \\

        \quad
        \begin{aligned}[t]
        &\sup_\nu \, -\int_{\mathcal{O}(F)} \nu(X) \log\left(\nu(X)\right) d\mu_F(X) \\
            &\textrm{subject to:} \\
                &\qquad \nu: \mathcal{O}(F) \to \R_{\geq 0}, \text{ $\mu_F$-measurable}, \\
                &\qquad \int_{\mathcal{O}(F)} X \nu(X) d\mu_F(X) = A, \\
                &\qquad \int_{\mathcal{O}(F)} \nu(X) d\mu_F(X) = 1.
        \end{aligned}
        \quad

        &

        ~
        \begin{aligned}[t]
        &\inf_{Y \in \mathfrak{g}}\, f_A(Y):=\langle Y, A \rangle + \log \int_{\mathcal{O}(F)} e^{-\langle Y, X \rangle} d\mu_F(X) \\
                &\textrm{subject to:} \\
                &\qquad \mathcal{L}(Y)=0.
        \end{aligned}
    \end{array}
    \]

    \caption{Primal and dual maximum entropy convex programs for $A$ in the interior of $\hull(\mathcal{O}(F))$.}
    \label{fig:primal_dual}
\end{figure*}

We use the shorthand $\primal_F(A)$ to refer to the primal convex optimization problem, and we use $\dual_F(A)$ to refer to the dual convex optimization problem.
Drawing from the intuition that $\mu_F$ is uniform over $\mathcal{O}(F)$, and hence in some sense maximizes entropy, we say the KL-divergence minimizing measure is entropy maximizing. 
The fact that the entropy integral (without the minus sign) is convex as a function of the density $\nu$ follows from the fact that this integral is precisely the KL divergence between the probability distribution corresponding to $\nu$ and the distribution $\mu_F$.
Convexity of the KL divergence for probability distributions is then a well-known fact.

\subsection{Duality and Strong Duality}

Efficiently solving the primal convex program directly is a priori impossible as the support of $\nu$ is infinite.
To find a succinct representation for the optimal $\nu^\star$, we turn to the dual program (see \cite{leake2020computability}, Appendix A for a derivation) which
gives us a nice representation of the maximum entropy density function $\nu^\star$.
By strong duality (see Theorem \ref{thm:strongduality} below), the maximum entropy density function $\nu^\star$ takes on a nice form:
\[
    \nu^\star(X) \propto e^{-\langle Y^\star, X \rangle}.
\]
Issues arising from non-uniqueness can be handled by restricting to the minimal affine subspace in which $\hull(\mathcal{O}(F))$ lives.
However, as $A$ tends to the boundary of $\hull(\mathcal{O}(F))$, $Y^\star$ can be seen to tend to infinity as the support of the measure $\nu^\star$ tends to lower dimensions.

That said, we now state the strong duality result.
This follows from Theorem A.4 of \cite{leake2020computability}.

\begin{theorem}[\textbf{Strong duality}]\label{thm:strongduality}
    Let $G$ be a compact connected real Lie group, and let $\mathfrak{g}$ be the corresponding Lie algebra equipped with $\Ad$-invariant inner product $\langle \cdot, \cdot \rangle$.
    Fix any $F \in \mathfrak{g}$, and let $\mu_F$ be the $\Ad$-invariant measure on $\mathcal{O}(F)$.
    For any $A$ in the relative interior of $\hull(\mathcal{O}(F))$,
    the optimal values of the primal and dual objective functions coincide, and the corresponding maximum entropy distribution has density function of the following form for some $Y^\star$:
    \[
        \nu^\star(X) \propto e^{-\langle Y^\star, X \rangle}.
    \]
\end{theorem}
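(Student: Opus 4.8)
The plan is to reduce the statement to the corresponding theorem (Theorem A.4) of \cite{leake2020computability}, which establishes strong duality and the exponential-family form of the optimal density for the maximum entropy problem of matching the expectation of a probability measure supported on a compact set $S \subset V$ (here $V$ a finite-dimensional real inner product space) to a target point $A$ in the relative interior of $\hull(S)$. The only thing that needs checking is that the present setup is an instance of that general framework. First I would set $V = \mathfrak{g}$ with the given $\Ad$-invariant inner product $\langle\cdot,\cdot\rangle$, set $S = \mathcal{O}(F)$, and take the reference measure to be $\mu_F$, the $\Ad$-invariant measure on $\mathcal{O}(F)$. Since $G$ is compact and connected, $\mathcal{O}(F) = \{\Ad_g F : g \in G\}$ is the continuous image of the compact set $G$, hence compact; and $\mu_F$ is a finite (after normalization, probability) Borel measure with support exactly $\mathcal{O}(F)$. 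Thus all the hypotheses of the general duality theorem are met, and it yields: the primal value equals the dual value, and whenever $A$ is in the relative interior of $\hull(\mathcal{O}(F))$ the dual infimum is attained at some $Y^\star$ lying in the relevant affine subspace, with optimal primal density $\nu^\star(X) \propto e^{-\langle Y^\star, X\rangle}$.

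The steps, in order, would be: (1) verify $\mathcal{O}(F)$ is compact (image of compact $G$ under the continuous map $g \mapsto \Ad_g F$, using that $\Ad$ is a continuous — indeed smooth — representation); (2) verify $\mu_F$ is a well-defined finite reference measure with full support on $\mathcal{O}(F)$, citing the proposition from \cite{audin2012} quoted above for existence and uniqueness up to scalar, and normalizing it to a probability measure; (3) identify $\mathcal{L}(X) = b$ as the minimal affine hull of $\hull(\mathcal{O}(F))$ and $\mathcal{L}(Y) = 0$ as its associated linear subspace, so that the dual is genuinely an unconstrained convex program on that subspace (this is exactly the ``restricting to the minimal affine subspace'' remark in the text, needed to kill the non-uniqueness coming from directions $Y$ orthogonal to $\spn(\mathcal{O}(F)-A)$ on which the objective is affine); (4) invoke Theorem A.4 of \cite{leake2020computability} to conclude strong duality and the form of $\nu^\star$; (5) note that convexity of $f_A$ is automatic (it is a log-partition function plus a linear term), and that finiteness of the dual objective for every $Y$ follows from compactness of $\mathcal{O}(F)$, which makes $e^{-\langle Y, X\rangle}$ bounded on the orbit.

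I do not expect a genuine obstacle here, since the theorem is explicitly asserted to follow from \cite{leake2020computability}; the only subtlety worth spelling out is step (3) — why attainment of $Y^\star$ holds precisely in the relative interior. The point is that as $A$ approaches the relative boundary of $\hull(\mathcal{O}(F))$, the optimal tilted measure concentrates on a proper face, forcing $\|Y^\star\| \to \infty$ (as the text itself notes), so restricting to the relative interior is exactly what is needed to guarantee a finite optimizer. One should also remark that the $\Ad$-invariance of $\langle\cdot,\cdot\rangle$ plays no role in \emph{this} particular theorem — it is used later for the Harish--Chandra formula and the Kostant convexity argument — so the strong duality statement is really just the general finite-dimensional maximum entropy duality specialized to the compact set $\mathcal{O}(F)$.
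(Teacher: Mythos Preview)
Your proposal is correct and matches the paper's approach exactly: the paper simply asserts that the result follows from Theorem A.4 of \cite{leake2020computability}, and your argument spells out precisely the verification of hypotheses (compactness of $\mathcal{O}(F)$, $\mu_F$ a finite fully-supported reference measure, restriction to the minimal affine subspace) needed to invoke that theorem. Your additional observation that $\Ad$-invariance of the inner product is irrelevant for this particular step is also correct and worth noting.
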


\noindent
With strong duality in hand, we may now focus on solving the dual optimization program in order to compute maximum entropy distributions.

\subsection{Main Result}

In this section we state our main result, an algorithm for computing the maximum entropy density function.
Let $G$ be a compact connected real Lie group with corresponding Lie algebra $\mathfrak{g}$ equipped with an $\Ad$-invariant inner product $\langle \cdot, \cdot \rangle$.
Since $G$ is compact and hence $\mathfrak{g}$ is reductive, we consider the decomposition of $\mathfrak{g}$ into its center and simple components given by $\mathfrak{g} = \mathfrak{z} \oplus \mathfrak{g}_1 \oplus \cdots \oplus \mathfrak{g}_n$.
We then choose bases
\[
    \mathfrak{z} = \spn\{Z_1,\ldots,Z_{d_c}\} \qquad \text{and} \qquad \mathfrak{g}_i = \spn\{H^i_1, \ldots, H^i_{m_i}, X^i_1, \ldots, X^i_{d_i-m_i}\},
\]
where $H^i_1, \ldots, H^i_{m_i}$ span a Cartan subalgebra $\mathfrak{h}_i \subset \mathfrak{g}_i$ for all $i \in [n]$, and such that on $\mathfrak{g}_i$ the bases are orthonormal with respect to $\langle \cdot, \cdot \rangle$ for all $i \in [n]$.
We further denote $\mathfrak{h} := \mathfrak{h}_1 \oplus \cdots \oplus \mathfrak{h}_n$.

In our applications discussed in Section \ref{sec:applications}, the bases and inner product will be very explicit.
However, note that Proposition \ref{prop:Killing_orthogonal} and Corollary \ref{cor:extend_Killing} imply choosing such an orthonormal basis is always possible by using the Gram-Schmidt procedure with the inner product formed by adding any inner product on $\mathfrak{z}$ to $-B$.

With this setup in hand, we can now state our main result.
Recall that the $\eta$-interior of a set $\mathcal{K}$ is the set of all points in the relative interior of which are at least $\eta$ away from the boundary of $\mathcal{K}$.

\begin{theorem}[\textbf{General algorithm}] \label{thm:lie_algo}
    Fix a compact connected real Lie group $G$ and corresponding reductive Lie algebra $\mathfrak{g} = \mathfrak{z} \oplus \mathfrak{g}_1 \oplus \cdots \oplus \mathfrak{g}_n$.
    Let an $\Ad$-invariant inner product $\langle \cdot, \cdot \rangle$ and a basis of $\mathfrak{g}$ be given as in the above setup.

    There exists an algorithm that, given an $F \in \mathfrak{z} \oplus \mathfrak{h}$, an $A \in \mathfrak{z} \oplus \mathfrak{h}$ in the $\eta$-interior of $\hull(\mathcal{O}(F))$, an $\epsilon > 0$, and a strong counting/integration oracle for the exponential integral $\mathcal{E}_F(Y)$ restricted to $\mathfrak{h}$, returns $Y^\circ \in \mathfrak{h}$ such that
    \[
        f_A(Y^\circ) \leq f_A(Y^\star) + \epsilon,
    \]
    where $f_A$ is the objective function for the dual program $\dual_F(A)$, and $Y^\star$ is the optimum of the dual program.
    The running time of the algorithm is polynomial in $\dim(\mathfrak{g})$, $\eta^{-1}$, $\log(\epsilon^{-1})$, and the number of bits needed to represent $A$ and $F$.
\end{theorem}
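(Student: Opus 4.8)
The plan is to reduce the statement to the ellipsoid-method framework of \cite{leake2020computability} (Theorem \ref{thm:general_algo}), checking that each ingredient needed by that framework is available in the present Lie-theoretic generality. Concretely, the framework requires: (i) a description of the minimal affine space $\mathcal{L}(X)=b$ containing $\hull(\mathcal{O}(F))$, presented by explicit linear equalities in the chosen orthonormal coordinates; (ii) an a priori bound, polynomial in the stated parameters, on $\|Y^\star\|$ for the dual optimum, so that the ellipsoid method can start from a bounding box of the right size; (iii) an evaluation oracle for the dual objective $f_A(Y)=\langle Y,A\rangle+\log\mathcal{E}_F(Y)$ together with its (sub)gradient, valid on that box; and (iv) a guarantee that the search can be confined to $\mathfrak{z}\oplus\mathfrak{h}$, since the oracle for $\mathcal{E}_F$ is only assumed on $\mathfrak{h}$ (via Harish-Chandra). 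I would establish these four points in order and then quote Theorem \ref{thm:general_algo} verbatim to conclude.

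For (i), I would use the adjoint-representation argument flagged in the introduction (Section \ref{sec:linear_equalities}): the span of $\mathcal{O}(F)$ inside $\mathfrak{g}$ is a $G$-subrepresentation of the adjoint representation, and since $\mathfrak{g}=\mathfrak{z}\oplus\bigoplus_i\mathfrak{g}_i$ decomposes into irreducibles (the center splitting into one-dimensional trivial pieces, each $\mathfrak{g}_i$ being irreducible under $\Ad$ because it is simple), the span of the orbit is a sub-sum of these pieces — it contains $\mathfrak{g}_i$ exactly when the $\mathfrak{g}_i$-component of $F$ is nonzero, and contains each line in $\mathfrak{z}$ on which $F$ has a nonzero coordinate. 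The affine hull is then this subspace translated by the $\mathfrak{z}$-component of $F$ (which is fixed pointwise by $\Ad$), giving the equalities: $\langle Z_j,Y\rangle = \langle Z_j,F\rangle$ for the central directions and $\langle\,\cdot\,,Y\rangle=0$ on the simple summands where $F$ vanishes. For (iv), I would invoke Kostant's convexity theorem (Theorem \ref{thm:kostant}): it implies that $\hull(\mathcal{O}(F))$ is $\Ad$-invariant and that its intersection with $\mathfrak{z}\oplus\mathfrak{h}$ is the convex hull of the Weyl-group orbit; combined with $\Ad$-invariance of $\langle\cdot,\cdot\rangle$ and of $A,F$ (both assumed in $\mathfrak{z}\oplus\mathfrak{h}$), an averaging argument shows the dual objective restricted to $\mathfrak{z}\oplus\mathfrak{h}$ has the same infimum as on all of $\mathfrak{g}$ and the optimizer $Y^\star$ can be taken in $\mathfrak{z}\oplus\mathfrak{h}$. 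For (iii), the oracle for $\mathcal{E}_F$ on $\mathfrak{h}$ is given by hypothesis; its logarithm plus the linear term is computable, and the gradient is $A$ minus the $\mathcal{E}_F$-weighted barycenter of the orbit, which the counting/integration oracle also provides.

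The main obstacle I expect is (ii), the bounding box: one must show $\|Y^\star\| = O(\poly(\cdots))$ as a function of $\dim\mathfrak{g}$, $\eta^{-1}$, and the bit-lengths of $A$ and $F$. The route is the ``balancedness'' argument of \cite{leake2020computability} adapted via the fifth ingredient listed in the introduction: because $\mu_F$ arises as a pushforward of the Haar measure on $G$ and is $\Ad$-invariant, it is ``balanced'' (roughly, its mass is not concentrated in any halfspace through the barycenter), and for such measures the distance from $A$ to the boundary of $\hull(\mathcal{O}(F))$ controls $\|Y^\star\|$ from above. Making this quantitative requires: bounding the diameter of $\mathcal{O}(F)$ in the Killing norm in terms of $\|F\|$ (since $\Ad$ is an isometry, every point of the orbit has the same norm as $F$, so the diameter is at most $2\|F\|$); lower-bounding, in terms of $\eta$, the ``width'' of $\hull(\mathcal{O}(F))$ that $A$ sees in every direction; and combining these as in \cite{leake2020computability} to get the polynomial bound. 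Once the box is in place, the ellipsoid method with the evaluation oracle from (iii), run inside the affine subspace from (i) and confined to $\mathfrak{z}\oplus\mathfrak{h}$ by (iv), produces $Y^\circ$ with $f_A(Y^\circ)\le f_A(Y^\star)+\epsilon$ in the claimed running time, which is exactly the conclusion of Theorem \ref{thm:general_algo} transported to this setting.
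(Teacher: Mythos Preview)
Your proposal is correct and follows essentially the same route as the paper: reduce to Theorem~\ref{thm:general_algo} by supplying (a) the affine equalities via irreducibility of the adjoint representation on each simple summand and fixity of the center, (b) the bounding box via balancedness of the $\Ad$-invariant measure $\mu_F$ together with the isometric bound $\|X\|=\|F\|$ on the orbit, (c) the oracle by hypothesis, and (d) restriction to the Cartan subalgebra via Kostant convexity applied to the linear part $\langle Y,A\rangle$ while the integral part $\mathcal{E}_F$ is $\Ad$-invariant.

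One small point you leave implicit: the theorem asks for $Y^\circ\in\mathfrak{h}$, not merely $\mathfrak{z}\oplus\mathfrak{h}$, and the oracle is only assumed on $\mathfrak{h}$. The paper handles this in two equivalent ways: either observe (Remark~\ref{rem:center_restrict}) that the dual constraint $\mathcal{L}(Y)=0$ already forces $\pi_c(Y)=0$, so the ellipsoid search automatically lives in $\mathfrak{h}$; or argue directly (via center orthogonality, Corollary~\ref{cor:center_orthogonal}) that $f_A(Y)=f_A(Y')$ where $Y'$ is the projection of $Y$ onto the semisimple part, since $\pi_c(A-X)=0$ for every $X\in\mathcal{O}(F)$. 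Your affine-equality description in (i) does include $\langle Z_j,\cdot\rangle=\langle Z_j,F\rangle$ for all central directions, so the first route is available to you, but it is worth saying explicitly.
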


\begin{remark}
    As discussed, compactness of the Lie group $G$ in the above theorem is crucial to our arguments because it implies one can use the Killing form to obtain an $\Ad$-invariant inner product on $\mathfrak{g}$.
    Connectedness is more subtle: a number of results we use require connectedness of the group, but not all.
    And, it seems plausible that one could extend these results to non-connected groups for our purposes.
    In particular, note that \cite{mcswiggen2018harish} contains effective formulas even for the disconnected case.
\end{remark}

\noindent
This result is related to Theorem 4.4 of \cite{leake2020computability} (see Theorem \ref{thm:general_algo} below), but its applications, given in Section \ref{sec:applications}, demonstrate the novelty of this paper with respect to \cite{leake2020computability}.
Specifically, the case of rank-$k$ projections studied in \cite{leake2020computability} is a special case of the above result for $G = \mathrm{U}(n)$ and $F = \diag(1,\ldots,1,0,\ldots,0)$.
In Section \ref{sec:applications}, we generalize this to different values of $F$ and to different compact connected Lie groups beyond $\mathrm{U}(n)$.

Before moving on to a full proof of this result, we first give a sketch.
The overarching idea is to use the ellipsoid method (Theorem \ref{thm:ellipsoid}) to approximate an optimal solution to the dual convex program $\dual_F(A)$.
To do this, we need to do a few things:

\begin{enumerate}
    \item Determine a small-enough ball containing $\mathcal{O}(F)$.
    \item Determine a bounding box for an optimal solution to $\dual_F(A)$.
    \item Determine a maximal set of affine equalities for $\hull(\mathcal{O}(F))$ and the corresponding bit complexity.
    \item Show that we can restrict our search space to $\mathfrak{h} \subset \mathfrak{g}$. (Note that the orbit $\mathcal{O}(F)$ is \emph{not} contained in $\mathfrak{h}$ or $\mathfrak{z} \oplus \mathfrak{h}$, even though $F$ is.)
\end{enumerate}

\noindent
The easiest to prove is $(1)$, which quickly follows from $\Ad$-invariance of $\mathcal{O}(F)$.
$\Ad$-invariance of $\mu_F$ also implies $\mu_F$ is \emph{balanced}, a notion which generalizes important features of the uniform measure.
This property of $\mu_F$, which was introduced and utilized in \cite{leake2020computability}, allows us to get bounds on the optimal solution in any direction, yielding the desired bounding box $(2)$.
See Section \ref{sec:bounding_box} for more details.

More interesting Lie theory is then needed to handle $(3)$ and $(4)$.
First, $(3)$ follows from irreducibility properties of the adjoint representation.
In particular, the convex hull of an adjoint orbit in a simple Lie algebra is either trivial or is full-dimensional.
One can then use complete reducibility for more general reductive Lie algebras, which says that the adjoint representation always decomposes into a direct sum.
These two facts in combination imply $\hull(\mathcal{O}(F))$ either spans $\mathfrak{g}_i$ or intersects $\mathfrak{g}_i$ trivially, for all $i$.
The fact that the center $\mathfrak{z}$ commutes with the adjoint action then implies the central component of $F$ remains fixed under the action.
(We were not able to find an explicit reference to the results we needed, so we prove what we need in Section \ref{sec:linear_equalities}.)

For $(4)$, we use the classical Kostant convexity theorem (Theorem \ref{thm:kostant}) which says that every (co)adjoint orbit intersects $\mathfrak{h}$ at a finite set of points, and the projection of the orbit onto $\mathfrak{h}$ is the convex hull of these (extreme) points.
Our optimization objective function then has two parts: a linear functional with respect to $A \in \mathfrak{h}$, and a $G$-invariant integral.
The fact that the adjoint orbit intersects $\mathfrak{h}$ means that we can restrict out $G$-invariant integral to taking inputs in $\mathfrak{h}$.
Optimizing a linear functional over a convex polytope is then equivalent to optimizing over the extreme points, so we can restrict the linear functional to taking inputs in $\mathfrak{h}$ as well.
We spell this out formally in Section \ref{sec:convexity}.

\begin{remark}
    We give an algorithm to solve a convex optimization problem over adjoint orbits of compact Lie groups.
    The obvious next question is: can this be extended to other classes of orbits and orbitopes?
    The most likely possibility is that of polar orbitopes, discussed in \cite{Kobert}.
    This class consists of adjoint orbits of a compact subgroup of a larger group.
    In this case, the compact group $G$ is acting on a vector space which is larger than its Lie algebra $\mathfrak{g}$.
    The corresponding orbitopes are always spectrahedra, and specific examples which fall into this class are the symmetric Schur-Horn orbitopes and the Fan orbitope (see \cite{Kobert} and \cite{sanyal2011}).
\end{remark}

\section{Proof of the Main Result} \label{sec:main_proof}

In this section, we complete the proof of Theorem \ref{thm:lie_algo}.
As stated above, the idea is to use the ellipsoid method (see Section \ref{sec:ellipsoid}) to approximate an optimal solution to the dual convex program.
To do that we need to finish the four points listed above, and the bulk of the work towards these four points is pushed to subsections.
That said, we complete the proof now, making reference to lemmas from the subsections as we need them.

Let $F := Z \oplus F_1 \oplus \cdots \oplus F_n$, and let $\pi_c: \mathfrak{g} \to \mathfrak{z}$ and $\pi_i: \mathfrak{g} \to \mathfrak{g_i}$ for all $i \in [n]$ denote the standard projections.
We want to apply Theorem \ref{thm:general_algo}, and so there are a few things to do which we listed in the previous section.

\textbf{(1) Determine a small-enough ball containing the orbit.}
Since $F$ is represented in terms of a basis orthonormal with respect to the $\Ad$-invariant inner product $\langle \cdot, \cdot \rangle$, we have that $\|F\| = \sqrt{\langle F,F \rangle} = \sqrt{\langle \Ad_g(F),\Ad_g(F) \rangle} = \|\Ad_g(F)\|$ for all $g \in G$. That is, $\mathcal{O}(F)$ is contained in a ball of radius $\|F\|$.

\textbf{(2) Determine a bounding box for the optimal solution.}
By Corollary \ref{cor:Lie_bounding_box}, the optimal solution $Y^\star$ to the dual program is such that
\[
    \|Y^\star\| \leq \frac{2d}{\eta} \log\left(\frac{8\sqrt{d}\|F\|}{\eta}\right) = \poly(d, \eta^{-1}, \log\|F\|)
\]
whenever $A$ is in the $\eta$-interior of $\mu_F$.

\textbf{(3) Determine a maximal set of affine equalities for the convex hull of the orbit.}
We appeal to Proposition \ref{prop:orbit_max_equalities} which says that
\[
    \left\{\pi_i(X) = 0 ~:~ F_i = 0\right\} \cup \left\{\pi_c(X) = Z\right\}
\]
is a maximal set of affine equalities satisfied by $\mathcal{O}(F)$.
This can be represented in matrix form as $\mathcal{L}(X) = b$, where $\mathcal{L}$ can be represented by a 0-1 matrix and the number of bits needed to represent $b$ is bounded by the number of bits needed to represent $F$.

Note that because we restrict our search space to a Cartan subalgebra in (4), we actually need to restrict the affine equalities given above to the Cartan subalgebra itself.
The simple form of the equalities given above implies this is a straightforward restriction which does not change the bit complexity.

\textbf{(4) Show that we can restrict our search space to a Cartan subalgebra.}
Note that $\mathfrak{z} \oplus \mathfrak{h}$ is a Cartan subalgebra of $\mathfrak{g}$.
This follows from the fact that $\mathfrak{z}$ is the center of $\mathfrak{g}$, and that $\mathfrak{h}_i$ is a Cartan subalgebra of $\mathfrak{g}_i$ for all $i \in [n]$.
By Proposition \ref{prop:torus}, this implies the Lie group $T \subset G$ associated to $\mathfrak{z} \oplus \mathfrak{h}$ is a maximal torus.

We first show that we can restrict our search space to $\mathfrak{z} \oplus \mathfrak{h}$ via a generalization of the Schur-Horn theorem in the unitary case.
By Theorem \ref{thm:kostant}, the adjoint orbit of any $Y \in \mathfrak{g}$ intersects $\mathfrak{z} \oplus \mathfrak{h}$.
Since
\[
    \mathcal{E}_{\mu_F}(Y) = \log \int_{\mathcal{O}(F)} e^{-\langle Y, X \rangle} d\mu_F(X)
\]
is invariant under the adjoint action of $G$ applied to $Y$, showing that $\langle Y, A \rangle$ is minimized on $\mathfrak{z} \oplus \mathfrak{h}$ implies $f_A(Y)$ is minimized on $\mathfrak{z} \oplus \mathfrak{h}$.
That $\langle Y, A \rangle$ is minimized on $\mathfrak{z} \oplus \mathfrak{h}$ then follows from Proposition \ref{prop:Kostant_optimization}, since our inner product $\langle \cdot, \cdot \rangle$ is a nondegenerate $\Ad$-invariant bilinear form on $\mathfrak{g}$.

Finally, we show that we can further restrict our search space to $\mathfrak{h}$.
To see this, note that
\[
    f_A(Y) = \log \int_{\mathcal{O}(F)} e^{\langle Y, A-X \rangle} d\mu_F(X),
\]
where $\pi_c(A-X) = 0$ since $\pi_c(X) = \pi_c(A) = Z$.
Corollary \ref{cor:center_orthogonal} then implies
\[
    \langle Y, A-X \rangle = \langle Y', A-X \rangle,
\]
where $Y' = 0 \oplus \pi_1(Y) \oplus \cdots \oplus \pi_n(Y) \in \mathfrak{h}$.
Therefore, $f_A(Y) = f_A(Y')$ for $Y' \in \mathfrak{h}$.

\begin{remark} \label{rem:center_restrict}
    Consider $V_\mathcal{L}$, the linear subspace of $\mathfrak{h}$ corresponding to the maximal set of affine equalities for $\hull(\mathcal{O}(F))$ given by
    \[
        \left\{\pi_i(X) = 0 ~:~ F_i = 0\right\} \cup \left\{\pi_c(X) = 0\right\}.
    \]
    The condition on the central component means that we only consider elements in the semisimple part of $\mathfrak{h}$.
    This shows that restricting our search space away from $\mathfrak{z}$ is actually already built in to Theorem \ref{thm:general_algo}.
\end{remark}

\subsection{Ellipsoid Framework} \label{sec:ellipsoid}

In this section we discuss the ellipsoid method, which is the underlying algorithm that the algorithm of Theorem \ref{thm:lie_algo} is based on.
We also recall the main ellipsoid method-based algorithm of \cite{leake2020computability}, which will serve as a base result for our main theorem.

First we recall the ellipsoid algorithm.
The following formulation was taken from \cite{SinghV14}, originally derived from Theorem 8.2.1 of \cite{BentalN12}.
As in Remark \ref{rem:center_restrict}, we let $V_\mathcal{L}$ denote the linear subspace of $\mathfrak{h}$ corresponding to the maximal set of affine equalities for $\hull(\mathcal{O}(F))$.

\begin{theorem}[\textbf{Ellipsoid algorithm}]\label{thm:ellipsoid}
    Given any $\beta > 0$ and $R > 0$, there is an algorithm which, given a strong first-order oracle for $f_A$, returns a $Y^\circ \in V_\mathcal{L}$ such that:
    \[
        f_A(Y^\circ) \leq \inf_{Y \in V_\mathcal{L}, \|Y\|_\infty \leq R} f_A(Y) + \beta\left(\sup_{Y \in V_\mathcal{L}, \|Y\|_\infty \leq R} f_A(Y) - \inf_{Y \in V_\mathcal{L}, \|Y\|_\infty \leq R} f_A(Y)\right).
    \]
    The number of calls to the strong first-order oracle for $f_A$ is bounded by a polynomial in $d$, $\log R$, and $\log (1/\beta)$. Here, $d$ is the dimension of the ambient Hilbert space in which $\Omega$ lies.
\end{theorem}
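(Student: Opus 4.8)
This is the classical ellipsoid method for convex minimization with a first-order oracle, and I would follow the standard proof (as in \cite{BentalN12}, Theorem~8.2.1, or \cite{SinghV14}); the only point requiring care is that the error guarantee is \emph{relative}, so it must cost no dependence on the range of $f_A$. Throughout, work inside the subspace $V_\mathcal{L}$, of dimension $d' \le d$, and let $K := \{Y \in V_\mathcal{L} : \|Y\|_\infty \le R\}$ be the feasible set. Since $V_\mathcal{L}$ is a coordinate subspace (the affine equalities from step \textbf{(3)} are coordinate conditions in the chosen orthonormal basis), $K$ is the $\ell_\infty$-ball of radius $R$ inside $V_\mathcal{L}$: it contains the Euclidean ball of radius $R$ about the origin and lies inside the Euclidean ball $E_0$ of radius $R\sqrt{d'}$, so $\mathrm{vol}(K) > 0$ and $\mathrm{vol}(E_0) \le C^{d'}\mathrm{vol}(K)$ for an absolute constant $C$.

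Then run the ellipsoid iteration from $E_0$. Given the current ellipsoid $E_k$ with center $c_k$: if $c_k \notin K$, use an explicit hyperplane separating $c_k$ from the box $K$; if $c_k \in K$, call the strong first-order oracle at $c_k$ to obtain $f_A(c_k)$ and a subgradient $g_k$, and use the half-space $H_k := \{Y : \langle g_k, Y - c_k\rangle \le 0\}$. In either case let $E_{k+1}$ be the minimum-volume ellipsoid in $V_\mathcal{L}$ containing the intersection of $E_k$ with the chosen half-space through $c_k$, which yields the usual decrease $\mathrm{vol}(E_{k+1}) \le e^{-1/(2(d'+1))}\mathrm{vol}(E_k)$. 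Two invariants are preserved: a feasibility cut removes only points outside $K$; and, by convexity of $f_A$, whenever $f_A(Y) \le f_A(c_k)$ we have $\langle g_k, Y - c_k\rangle \le f_A(Y) - f_A(c_k) \le 0$, so an objective cut at a feasible center keeps every $Y \in K$ with $f_A(Y) \le f_A(c_k)$. Hence, after any number of iterations, if $m$ denotes the least value $f_A(c_j)$ over the feasible centers $c_j$ encountered so far, the sublevel set $\{Y \in K : f_A(Y) \le m\}$ is contained in the current ellipsoid.

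The core step is the shrinking argument. Let $Y^* \in K$ minimize $f_A$ over $K$, and put $\Delta := \sup_K f_A - \inf_K f_A$. For any $Y \in K$ the point $Z := (1-\beta)Y^* + \beta Y$ lies in $K$, and convexity gives $f_A(Z) \le (1-\beta)f_A(Y^*) + \beta f_A(Y) \le \inf_K f_A + \beta\Delta$. Thus $K'' := (1-\beta)Y^* + \beta K \subseteq \{Y \in K : f_A(Y) \le \inf_K f_A + \beta\Delta\}$ and $\mathrm{vol}(K'') = \beta^{d'}\mathrm{vol}(K) > 0$. I would now choose the number of iterations $N = O\!\big(d'^2(\log C + \log(1/\beta))\big) = \mathrm{poly}(d,\log(1/\beta))$ so that $\mathrm{vol}(E_N) < \beta^{d'}\mathrm{vol}(K)$. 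If the least feasible-center value $m$ satisfied $m > \inf_K f_A + \beta\Delta$, then every point of $K''$ would have $f_A$-value below $m$ and hence lie in $E_N$ by the invariant, giving $\mathrm{vol}(E_N) \ge \mathrm{vol}(K'') = \beta^{d'}\mathrm{vol}(K)$, a contradiction. Therefore $m \le \inf_K f_A + \beta\Delta = \inf_{Y \in V_\mathcal{L},\, \|Y\|_\infty \le R} f_A(Y) + \beta\Delta$, and returning the feasible center $Y^\circ \in V_\mathcal{L}$ attaining $m$ proves the bound; the number of oracle calls is $N = \mathrm{poly}(d,\log(1/\beta))$, with $\log R$ entering only through the bit-length of the arithmetic.

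The one genuinely delicate point I would still have to handle is that the minimum-volume ellipsoid update involves square roots and cannot be performed exactly. The standard remedy is to compute each $E_{k+1}$ approximately, slightly inflating it to preserve containment while keeping a (marginally weaker) geometric volume decrease; carrying this rounding analysis through, together with the sandwiching of $K$ between Euclidean balls of radii $R$ and $R\sqrt{d'}$, is what produces a bound that is polynomial in $d$, $\log R$, and $\log(1/\beta)$. I expect this numerical bookkeeping, and not any of the convexity steps above, to be the main obstacle to a fully rigorous treatment.
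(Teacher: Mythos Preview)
The paper does not prove this theorem at all: it is stated as a black-box result, with the text explicitly saying ``The following formulation was taken from \cite{SinghV14}, originally derived from Theorem~8.2.1 of \cite{BentalN12}.'' So there is nothing to compare your argument against on the paper's side.

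That said, your write-up is the standard ellipsoid-method proof and is essentially correct: the invariant that the sublevel set $\{Y\in K:f_A(Y)\le m\}$ stays inside the current ellipsoid, together with the shrinking argument via $K''=(1-\beta)Y^*+\beta K$, is exactly how one obtains the relative-error guarantee without dependence on the range of $f_A$. Your observation that $V_\mathcal{L}$ is a coordinate subspace in the paper's chosen basis (so that $K$ is a genuine $\ell_\infty$-ball there) is also correct given Proposition~\ref{prop:orbit_max_equalities}. The only caveat is the one you already flag: the finite-precision ellipsoid update and the resulting $\log R$ dependence are bookkeeping you defer to the references, which is precisely what the paper does as well by citing \cite{BentalN12} and \cite{SinghV14}.
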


\noindent
We will not use the above result directly, instead using the main algorithm from \cite{leake2020computability} which is based upon the ellipsoid method.
We now state this result, which gives conditions for the existence of an algorithm for approximating the optimum to the dual objective.
This result relies on the definition of a \emph{balanced measure}, which we discuss in the next section.

\begin{theorem}[\textbf{Main algorithm from \cite{leake2020computability}}] \label{thm:general_algo}
    Let $\mu$ be a measure on a domain $\Omega \subseteq \R^d$ for which
    \begin{enumerate}
        \item $\Omega$ is contained in a ball of radius $r$,
        \item $\mu$ is balanced (see Definition \ref{def:balanced}), and
        \item $\mathcal{L}(X) = b$ is a maximal set of linearly independent equalities for $\Omega$.
    \end{enumerate}
    There exists an algorithm that, given an $A$ in the $\eta$-interior of $\mathcal{K} = \hull(\Omega)$, an $\epsilon > 0$, and a strong counting/integration oracle for the exponential integral $\mathcal{E}_\mu(Y)$, returns $Y^\circ \in V_\mathcal{L}$ such that
    \[
        f_A(Y^\circ) \leq f_A(Y^\star) + \epsilon,
    \]
    where $f_A$ is the objective function for the dual program $\dual_\mu(A)$, and $Y^\star \in V_\mathcal{L}$ is the optimum of the dual program.
    The running time of the algorithm is polynomial in $d$, $\eta^{-1}$, $\log(\epsilon^{-1})$, $\log(r)$, and the number of bits needed to represent $A$, $\mathcal{L}$, and $b$.
\end{theorem}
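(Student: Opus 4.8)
The plan is to invoke the ellipsoid algorithm of Theorem \ref{thm:ellipsoid} applied to the convex function $f_A$ restricted to the affine search space $V_\mathcal{L}$, with a box radius $R$ chosen large enough to contain an optimal dual solution $Y^\star$. Since $A$ lies in the $\eta$-interior of $\mathcal{K} = \hull(\Omega)$, in particular in the relative interior, the dual optimum $Y^\star \in V_\mathcal{L}$ is attained; it is characterized by the first-order condition $\mathbb{E}_{\nu_{Y^\star}}[X] = A$, where $\nu_Y(X) \propto e^{-\langle Y, X\rangle}$. To run the ellipsoid method we must supply three ingredients: (i) a polynomially bounded $R$ with $\|Y^\star\|_\infty \le R$; (ii) a strong first-order oracle for $f_A$ built from the given counting/integration oracle for $\mathcal{E}_\mu$; and (iii) a conversion of the multiplicative error guarantee of Theorem \ref{thm:ellipsoid} into the additive guarantee $f_A(Y^\circ) \le f_A(Y^\star) + \epsilon$.

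For (i), the key hypothesis is balancedness of $\mu$ (Definition \ref{def:balanced}). Heuristically, if $\|Y^\star\|$ were very large then the tilted measure $\nu_{Y^\star}$ would concentrate on the face of $\mathcal{K}$ extreme in the direction $-Y^\star$, forcing its mean $A$ to within distance $o(\eta)$ of $\partial\mathcal{K}$ and contradicting that $A$ is in the $\eta$-interior. Balancedness is the quantitative statement that $\mu$ places a nontrivial lower-bounded mass in every halfspace through (a neighborhood of) its centroid, which controls the rate at which exponential tilting can concentrate the measure; carrying this out yields $\|Y^\star\| \le \poly(d, \eta^{-1}, \log r)$, using $\Omega \subseteq B(0,r)$. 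One also notes that it suffices to search within $V_\mathcal{L}$: since every $X \in \Omega$ and $A$ itself satisfy $\mathcal{L}(\cdot) = b$, the difference $A - X$ lies in the homogeneous subspace $V_\mathcal{L}$, so any component $Y_\perp$ of $Y$ orthogonal to $V_\mathcal{L}$ contributes $\langle Y_\perp, A - X\rangle = 0$ to $f_A(Y) = \log\int_\Omega e^{\langle Y, A - X\rangle}\,d\mu(X)$, i.e. $f_A$ is constant along $V_\mathcal{L}^\perp$.

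For (ii), write $f_A(Y) = \langle Y, A\rangle + \log\int_\Omega e^{-\langle Y, X\rangle}\,d\mu(X)$; both its value and its gradient $A - \frac{\int_\Omega X\, e^{-\langle Y, X\rangle}\,d\mu}{\int_\Omega e^{-\langle Y, X\rangle}\,d\mu}$ are computable to any desired inverse-polynomial accuracy from a strong counting/integration oracle for $\mathcal{E}_\mu$, and the ellipsoid method is robust to first-order oracles with such bounded perturbations. For (iii), $f_A$ is convex and on the box $\{Y \in V_\mathcal{L} : \|Y\|_\infty \le R\}$ its oscillation is bounded: $|\langle Y, A\rangle| \le \sqrt{d}\,R\,r$ since $\|A\| \le r$, and the log-term varies by at most $2\sqrt{d}\,R\,r$ because the exponent lies in $[-\|Y\|r, \|Y\|r]$ (the overall normalization of $\mu$ drops out of the oscillation). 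Thus $\sup - \inf$ of $f_A$ over the box is $\Delta = O(\sqrt{d}\,R\,r) = \poly(d, \eta^{-1}, r)$; choosing $\beta := \epsilon/\Delta$ in Theorem \ref{thm:ellipsoid} makes the relative-error term at most $\epsilon$, and since $\|Y^\star\|_\infty \le R$ the box infimum equals $f_A(Y^\star)$, giving $f_A(Y^\circ) \le f_A(Y^\star) + \epsilon$. The oracle-call count is polynomial in $d$, $\log R$, $\log(1/\beta)$, hence in $d$, $\eta^{-1}$, $\log(\epsilon^{-1})$, $\log r$, and the bit complexities of $A, \mathcal{L}, b$; tracking bit sizes through the ellipsoid iterations completes the running-time bound.

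The main obstacle is step (i): turning the qualitative principle "a balanced measure cannot be exponentially tilted so as to concentrate too quickly" into an explicit polynomial bound on $\|Y^\star\|$ in terms of $\eta$, $d$, and $r$ — this is where the precise definition of balancedness must be used carefully. By comparison, the first-order oracle construction, the restriction to $V_\mathcal{L}$, and the multiplicative-to-additive conversion are routine convexity and error-propagation arguments.
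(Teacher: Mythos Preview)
The paper does not prove this theorem: it is quoted verbatim as the ``Main algorithm from \cite{leake2020computability}'' and used as a black box (the paper's own contribution is to verify its hypotheses for adjoint orbits). So there is no proof in the paper to compare against. That said, your sketch is a faithful reconstruction of the argument one expects in \cite{leake2020computability}, and it lines up with the auxiliary statements the present paper imports: the ellipsoid guarantee (Theorem~\ref{thm:ellipsoid}) and the bounding-box theorem in Section~\ref{sec:bounding_box}, which together are exactly your steps (iii) and (i).

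One inaccuracy worth fixing: your paraphrase of balancedness as ``$\mu$ places a nontrivial lower-bounded mass in every halfspace through (a neighborhood of) its centroid'' is not what Definition~\ref{def:balanced} says. Balancedness is the statement that the $\delta$-ball about \emph{every} point of $\Omega$ carries $\mu$-mass at least $\exp(-\poly(\delta^{-1},d))$. The bounding-box argument then runs as follows: pick $X_0\in\Omega$ maximizing $\langle -Y^\star, X_0\rangle$; balancedness at scale $\delta=\eta/2$ puts mass $\ge e^{-f(2\eta^{-1},d)}$ in $B_{\eta/2}(X_0)$, and on that ball the exponential weight $e^{-\langle Y^\star,X\rangle}$ dominates the weight elsewhere by a factor $e^{\eta\|Y^\star\|/2}$ (using that $A$ is $\eta$-deep, so $\langle -Y^\star, X_0-A\rangle \ge \eta\|Y^\star\|$). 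If $\|Y^\star\|$ were too large this would force the tilted mean out of the $\eta$-interior, contradicting $\mathbb{E}_{\nu_{Y^\star}}[X]=A$. This is the mechanism behind the stated bound $\|Y^\star\|\le 2\eta^{-1} f(2\eta^{-1},d)$; your halfspace heuristic points in the right direction but would not by itself yield the quantitative inequality. The remaining pieces of your outline (restriction to $V_\mathcal{L}$, oracle construction, oscillation bound and choice of $\beta$) are correct as written.
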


\subsection{Bounding Box} \label{sec:bounding_box}

To apply the ellipsoid method, we need some effective bound on the size of the optimal input for the dual program.
This was also needed in \cite{leake2020computability}, and here we are able to extend previous bounds to adjoint orbits of Lie groups.
The key notion used in \cite{leake2020computability} is that of a \emph{balanced measure}, which served as a generalization of the notion of uniform measure.
Combining arguments used in that paper with the $\Ad$-invariance properties of the measure $\mu_F$ then leads to the required bounds.

We first recall the definition of a balanced measure from \cite{leake2020computability}.

\begin{definition}[\textbf{Balanced measure}] \label{def:balanced}
    A measure $\mu$ is said to be \emph{$\delta$-balanced} if for any $X \in \Omega$ we have that at least $\exp(-\poly(\delta^{-1}, d))$ of the mass of $\mu$ is contained in the $\delta$-ball about $X$.
    A measure $\mu$ is said to be \emph{balanced} if $\mu$ is $\delta$-balanced for any $\delta > 0$.
\end{definition}

\noindent
We now prove that the invariant measure $\mu_F$ on the adjoint orbit of $F$ is balanced, which in turn implies a bounding box result for $\mu_F$.

\begin{proposition}[\textbf{Balancedness of $\Ad$-invariant measures}]
    Given a compact Lie group $G$ of dimension $d$ with corresponding $d$-dimensional Lie algebra $\mathfrak{g}$, let $\mu_F$ be the $G$-invariant probability measure on $\mathcal{O}(F)$, the adjoint orbit of $F$. Then $\mu_F$ is $\delta$-balanced with bound $f(\delta^{-1}, d) = d\log(4\sqrt{d}\|F\|\delta^{-1})$ for all $\delta > 0$.
\end{proposition}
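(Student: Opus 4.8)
The plan is to reduce the balancedness of $\mu_F$ on $\mathcal{O}(F)$ to a statement about the pushforward of the Haar measure on $G$ under the orbit map $g \mapsto \Ad_g F$. The key structural fact is that $\mu_F$ is, up to normalization, exactly this pushforward: since the Haar measure on $G$ is left-invariant and $\mu_F$ is the unique $\Ad$-invariant probability measure on $\mathcal{O}(F)$, the map $G \to \mathcal{O}(F)$, $g \mapsto \Ad_g F$, sends Haar measure to $\mu_F$. Therefore, to show that a $\delta$-ball $B_\delta(\Ad_{g_0} F)$ around an arbitrary point of the orbit carries mass at least $\exp(-\poly(\delta^{-1}, d))$, it suffices to lower-bound the Haar measure of the set $\{g \in G : \|\Ad_g F - \Ad_{g_0} F\| \leq \delta\}$. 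By left-invariance of Haar measure we may replace $g$ by $g_0 g$ and assume $g_0 = e$, so we need only bound the Haar mass of $U_\delta := \{g \in G : \|\Ad_g F - F\| \leq \delta\}$.

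First I would observe that $U_\delta$ contains a neighborhood of the identity whose Haar-measure can be controlled quantitatively. Using the exponential map (Proposition~\ref{prop:Lie_exp}) and Corollary~\ref{cor:exp_deriv}, for $X \in \mathfrak{g}$ with $\|X\|$ small we have $\Ad_{\exp(X)} F = F + \ad_X(F) + O(\|X\|^2)$, and more usefully the exact bound $\|\Ad_{\exp(X)} F - F\| \leq (e^{\|\ad_X\|} - 1)\|F\| \leq (e^{C\|X\|}-1)\|F\|$ via the series $\Ad_{\exp(X)} = \exp(\ad_X) = \sum_{k\ge0} \ad_X^k / k!$, where $C$ bounds the operator norm of $\ad$ (which, on a space with an $\Ad$-invariant inner product, is itself controlled: $\ad_X$ is skew-adjoint, and $\|\ad_X\| \leq \|X\|$ after suitable normalization, or in general $\leq c_{\mathfrak g}\|X\|$). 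Hence $\exp(X) \in U_\delta$ whenever $\|X\| \leq \frac{1}{C}\log\!\big(1 + \tfrac{\delta}{\|F\|}\big)$, i.e. $U_\delta$ contains the exponential image of a ball of radius $\rho := \tfrac{1}{C}\log(1 + \delta/\|F\|) \geq \tfrac{\delta}{2C\|F\|}$ for $\delta$ not too large (and for large $\delta$ the orbit is entirely inside $B_\delta$ and the bound is trivial).

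The remaining step is to lower-bound the Haar measure of $\exp(B_\rho(0))$ in terms of $\rho$ and $d$. Here I would use that $\exp$ is a local diffeomorphism near $0$ with derivative the identity at $0$, so that the pushforward of Lebesgue measure on a small ball under $\exp$ is comparable to Haar measure; quantitatively, since $G$ is compact of dimension $d$, Haar measure of $\exp(B_\rho(0))$ is at least $(\rho/\mathrm{diam})^d$ times a constant, giving a bound of the shape $\exp(-d\log(c/\rho)) = \exp(-d\log(2cC\|F\|/\delta))$. Tracking the constants carefully — the covering/volume-comparison constant, the constant $C$ relating $\|\ad_X\|$ to $\|X\|$ under the chosen orthonormal (Killing-form-derived) basis — should yield the stated bound $f(\delta^{-1},d) = d\log(4\sqrt d \|F\|\delta^{-1})$. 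The main obstacle I anticipate is precisely this constant-chasing: getting the clean numeric bound requires using the $\Ad$-invariance of the inner product to control $\|\ad\|$ and a careful (but standard) volume comparison between a Euclidean ball in $\mathfrak g$ and its exponential image in $G$, rather than any conceptual difficulty. An alternative that sidesteps the exponential-map estimates: cover $\mathcal O(F)$ by $N \leq (c\sqrt d\|F\|/\delta)^d$ balls of radius $\delta$, and use $\Ad$-invariance of $\mu_F$ to argue all translates of a given ball along the orbit have equal mass, so the heaviest ball has mass $\geq 1/N$ — then one still needs to transport this to an arbitrary center, which again uses transitivity of the $G$-action on $\mathcal O(F)$ together with invariance of $\mu_F$. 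Either route gives the claim.
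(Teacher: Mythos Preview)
Your ``alternative'' at the end is exactly what the paper does, and it is the route that actually yields the stated constant: since $\|\Ad_g F\| = \|F\|$, the orbit sits in the radius-$\|F\|$ ball in $\R^d$, which can be covered by at most $(2\sqrt{d}\,\|F\|/\delta)^d$ balls of radius $\delta$; pigeonhole gives one such ball $B_\delta$ with $\mu_F(B_\delta) \geq (2\sqrt{d}\,\|F\|/\delta)^{-d}$; picking any $X \in \mathcal{O}(F)\cap B_\delta$ and doubling the radius gives $\mu_F(B_{2\delta}(X)) \geq (2\sqrt{d}\,\|F\|/\delta)^{-d}$; and then $\Ad$-invariance of $\mu_F$ transports this lower bound to the $2\delta$-ball around \emph{every} orbit point. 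Replacing $2\delta$ by $\delta$ gives the bound $d\log(4\sqrt{d}\,\|F\|\delta^{-1})$ exactly. Note the radius-doubling step: the covering balls are not centered on the orbit, so you cannot directly say ``all translates have equal mass''; you first recenter and enlarge.

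Your primary route through the exponential map is conceptually fine but is the wrong tool for the stated constant. The two places you flag as ``constant-chasing'' are in fact the whole difficulty: bounding $\|\ad_X\|$ by $\|X\|$ (or $c_{\mathfrak g}\|X\|$) is not automatic from $\Ad$-invariance alone and the constant is group-dependent in general, and lower-bounding the Haar measure of $\exp(B_\rho(0))$ by $(\rho/\mathrm{diam})^d$ requires control on the Jacobian of $\exp$ and on the diameter of $G$, neither of which is uniform across compact groups of a given dimension. You would end up with a bound of the correct shape $d\log(c\|F\|/\delta)$ but with an unspecified $c$ depending on $G$, not the clean $4\sqrt{d}$. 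The covering argument sidesteps all of this because it never touches the group: it works entirely in the ambient $\R^d$ and uses only that $\mu_F$ is an $\Ad$-invariant probability measure supported in a ball.
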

\begin{proof}
    For all $X \in \mathcal{O}(F)$, there exists some $g \in G$ such that $X = \Ad_g(F)$.
    Therefore for all $X \in \mathcal{O}(F)$, we have
    \[
        \|X\|^2 = \langle \Ad_g(F), \Ad_g(F) \rangle = \langle F, F \rangle = \|F\|^2.
    \]
    So, $\mathcal{O}(F)$ is contained in the boundary of the ball of radius $\|F\|$ centered at 0 in $\mathfrak{g}$.
    The number of balls of size $\delta$ required to cover the Euclidean ball of radius $\|F\|$ in $\R^d$ is known to be at most $(2\|F\|\sqrt{d}/\delta)^d$.
    With this, there exists some $\delta$-ball (call in $B_\delta$) in this cover for which
    \[
        \mu_F(B_\delta) \geq (2\|F\|\sqrt{d}/\delta)^{-d}.
    \]
    Pick some $X \in \mathcal{O}(F) \cap B_\delta$, and let $B_{2\delta}(X)$ be the ball of the radius $2\delta$ which is centered at $X$. So in fact we have
    \[
        \mu_F(B_{2\delta}(X)) \geq (2\|F\|\sqrt{d}/\delta)^{-d}.
    \]
    By $G$-invariance of $\mu_F$, this bound applies to any $X \in \mathcal{O}(F)$.
    That is, $\mu_F$ is $(2\delta)$-balanced with bound
    \[
        f((2\delta)^{-1}, d) = d \cdot \log(2\sqrt{d}\|F\|\delta^{-1})
    \]
    for all $\delta > 0$. The result follows.
\end{proof}

\noindent
We next recall the general bounding box result from \cite{leake2020computability} and use it to prove a bounding box for $\mu_F$.
Again, recall that we let $V_\mathcal{L}$ denote the linear subspace corresponding to the maximal set of affine equalities for $\hull(\mathcal{O}(F))$.

\begin{theorem}[\textbf{Bounding box from \cite{leake2020computability}}]
    Suppose $\mu$ is $\frac{\eta}{2}$-balanced with bound $f$. If $A$ is in the $\eta$-interior of $\mu$ and $Y^\star \in V_\mathcal{L}$ is the optimal solution to the corresponding dual program, then $\|Y^\star\| \leq 2\eta^{-1} \cdot f(2\eta^{-1}, d) = \poly(\eta^{-1}, d)$.
\end{theorem}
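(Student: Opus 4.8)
The plan is to show that the dual objective $f_A$ grows at least linearly in $\|Y\|$ along every feasible direction and to play this off against the trivial bound obtained at $Y=0$. Since $\mu$ is a probability measure, $f_A(0) = \langle 0, A\rangle + \log\int_\Omega d\mu = 0$, and $0 \in V_\mathcal{L}$ is feasible, so optimality gives $f_A(Y^\star) \le 0$. It therefore suffices to establish a lower bound of the form $f_A(Y) \ge \tfrac{\eta}{2}\|Y\| - f(2\eta^{-1}, d)$ for all $Y \in V_\mathcal{L}$.

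First I would reduce to a single direction: write $Y = tu$ with $t = \|Y\| \ge 0$ and $u \in V_\mathcal{L}$ a unit vector (the case $\dim V_\mathcal{L} = 0$ is trivial, since then $\hull(\Omega)$ is a point and $Y^\star = 0$). Because $A$ lies in the $\eta$-interior of $\mathcal{K} := \hull(\Omega)$ and $u$ lies in the linear subspace $V_\mathcal{L}$ underlying the affine flat of $\Omega$, the point $A - \eta u$ still belongs to $\mathcal{K}$. By Carathéodory we may write $A - \eta u = \sum_i \lambda_i X_i$ as a convex combination of points $X_i \in \Omega$; averaging the linear functional $\langle u, \cdot\rangle$ over this combination gives $\min_i \langle u, X_i\rangle \le \langle u, A - \eta u\rangle = \langle u, A\rangle - \eta$. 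Fixing a minimizing index and calling the corresponding point $X_0 \in \Omega$, we obtain $\langle u, A - X_0\rangle \ge \eta$.

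Next I would feed a small ball around $X_0$ into the exponential integral. For $X \in B_{\eta/2}(X_0)$, Cauchy--Schwarz gives $\langle u, X\rangle \le \langle u, X_0\rangle + \tfrac{\eta}{2}$, so
\[
    \int_\Omega e^{-t\langle u, X\rangle}\, d\mu(X) \ \geq\ \int_{B_{\eta/2}(X_0)} e^{-t\langle u, X\rangle}\, d\mu(X) \ \geq\ e^{-t\left(\langle u, X_0\rangle + \eta/2\right)}\, \mu\!\left(B_{\eta/2}(X_0)\right).
\]
Since $\mu$ is $\tfrac{\eta}{2}$-balanced with bound $f$ and $X_0 \in \Omega$, Definition \ref{def:balanced} gives $\mu(B_{\eta/2}(X_0)) \ge e^{-f(2\eta^{-1}, d)}$. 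Taking logarithms and adding $t\langle u, A\rangle = \langle Y, A\rangle$ yields
\[
    f_A(Y) \ \geq\ t\langle u, A - X_0\rangle - \tfrac{\eta}{2}\,t - f(2\eta^{-1}, d) \ \geq\ \tfrac{\eta}{2}\,\|Y\| - f(2\eta^{-1}, d),
\]
where the last step uses $\langle u, A - X_0\rangle \ge \eta$. Setting $Y = Y^\star$ and combining with $f_A(Y^\star) \le 0$ gives $\tfrac{\eta}{2}\|Y^\star\| \le f(2\eta^{-1}, d)$, i.e. $\|Y^\star\| \le 2\eta^{-1} f(2\eta^{-1}, d)$, which is $\poly(\eta^{-1}, d)$ by the polynomial bound built into the definition of a balanced measure.

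The step I expect to need the most care is the bookkeeping with the affine constraints: one has to verify that $u \in V_\mathcal{L}$ forces $A - \eta u$ into the same affine flat as $\Omega$ (so that the $\eta$-interior hypothesis genuinely applies), and --- crucially --- that the point $X_0$ extracted from the convex combination is an honest element of $\Omega$, not merely of $\hull(\Omega)$, since $\tfrac{\eta}{2}$-balancedness is only asserted at points of $\Omega$. Everything else is the routine convexity/exponential-integral estimate recorded above.
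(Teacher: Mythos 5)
Your proposal is correct, and it is essentially the standard argument: the paper itself does not reprove this statement (it imports it from \cite{leake2020computability}), and the proof there proceeds exactly as you do, via $f_A(Y^\star)\le f_A(0)$ together with the lower bound $f_A(Y)\ge \tfrac{\eta}{2}\|Y\|-f(2\eta^{-1},d)$ obtained by shifting $A$ by $\eta$ along $-Y/\|Y\|$ inside the affine hull, extracting a point $X_0\in\Omega$ with $\langle Y, A-X_0\rangle\ge \eta\|Y\|$, and using $\tfrac{\eta}{2}$-balancedness to lower bound the mass of the ball around $X_0$. Your closing caveats (that $u\in V_\mathcal{L}$ keeps $A-\eta u$ in the affine flat, and that $X_0$ must lie in $\Omega$ itself) are exactly the right points to check, and they hold here since $V_\mathcal{L}$ is the linear space parallel to the affine hull of $\Omega$ and convex-hull points are finite convex combinations of points of $\Omega$; the only implicit normalization is that $\mu$ is a probability measure (or that balancedness is read as a fraction of total mass, in which case the $\log\mu(\Omega)$ terms cancel).
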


\begin{corollary}[\textbf{Bounding box for adjoint orbits}] \label{cor:Lie_bounding_box}
    Fix a compact Lie group $G$ of dimension $d$ with corresponding $d$-dimensional Lie algebra $\mathfrak{g}$, and fix $A,F \in \mathfrak{g}$ such that $A$ is in the $\eta$-interior of $\mu_F$. If $Y^\star \in V_\mathcal{L}$ is the optimal solution to the dual program $\dual_F(A)$, then $\|Y^\star\| \leq \frac{2d}{\eta} \log\left(\frac{8\sqrt{d}\|F\|}{\eta}\right)$.
\end{corollary}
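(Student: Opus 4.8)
The plan is to combine the two results that immediately precede the corollary. The preceding proposition (Balancedness of $\Ad$-invariant measures) shows that $\mu_F$ is $\delta$-balanced with bound $f(\delta^{-1}, d) = d\log(4\sqrt{d}\|F\|\delta^{-1})$ for every $\delta > 0$. The preceding theorem (Bounding box from \cite{leake2020computability}) then says that if $\mu$ is $\tfrac{\eta}{2}$-balanced with bound $f$ and $A$ lies in the $\eta$-interior of $\mu$, then any optimal dual solution $Y^\star \in V_\mathcal{L}$ satisfies $\|Y^\star\| \leq 2\eta^{-1} \cdot f(2\eta^{-1}, d)$. So the only work is to specialize $\mu = \mu_F$ and plug in.

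First I would apply the balancedness proposition with $\delta = \tfrac{\eta}{2}$, which tells us $\mu_F$ is $\tfrac{\eta}{2}$-balanced with bound
\[
    f(2\eta^{-1}, d) = d\log\!\left(4\sqrt{d}\,\|F\|\cdot 2\eta^{-1}\right) = d\log\!\left(\frac{8\sqrt{d}\,\|F\|}{\eta}\right).
\]
Then, since $A$ is in the $\eta$-interior of $\mu_F$ by hypothesis, the bounding box theorem applies verbatim and yields
\[
    \|Y^\star\| \leq \frac{2}{\eta}\cdot f(2\eta^{-1}, d) = \frac{2d}{\eta}\log\!\left(\frac{8\sqrt{d}\,\|F\|}{\eta}\right),
\]
which is exactly the claimed bound.

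There is essentially no obstacle here: the corollary is a direct composition of the two named results, and the only thing to check is that the argument of the bound function is substituted consistently (i.e. that $f$ evaluated at $2\eta^{-1}$ matches the $\delta = \eta/2$ choice of balancedness). One minor point worth stating explicitly is that the bounding box theorem is phrased for optimal solutions in $V_\mathcal{L}$, and we invoke it with $V_\mathcal{L}$ the linear subspace corresponding to the maximal set of affine equalities for $\hull(\mathcal{O}(F))$; this is consistent with how $Y^\star$ is defined in the statement of the corollary, so no extra argument is needed.
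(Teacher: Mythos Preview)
Your proposal is correct and matches the paper's approach exactly: the corollary is stated immediately after the balancedness proposition and the bounding box theorem from \cite{leake2020computability} with no separate proof, precisely because it is the direct substitution you carry out. Your check that $\delta = \eta/2$ gives $f(2\eta^{-1},d) = d\log(8\sqrt{d}\|F\|/\eta)$ and hence $\|Y^\star\| \le \frac{2d}{\eta}\log\!\big(\frac{8\sqrt{d}\|F\|}{\eta}\big)$ is exactly the intended derivation.
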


\subsection{Affine Equalities} \label{sec:linear_equalities}

In this section, we discuss $\ad$-invariant subspaces of $\mathfrak{g}$ and their connections to affine equalities for adjoint orbits.
A subspace $V \subseteq \mathfrak{g}$ is said to be $\ad$-invariant if $\ad_X(Y) \in V$ for all $X \in \mathfrak{g}$ and $Y \in V$.
The results stated here are mainly things that are well-known to people who work within Lie theory, and more or less follow from reducibility properties of the adjoint representation.
We could not find explicit references to these results, and so we have proven them here.

The main idea behind the first result is that the $\Ad$ and $\ad$ actions cut out the same subspaces.
Using this, we can then apply affine equalities for the orbit of the action of $\ad$ to the adjoint orbit.
This is helpful because the action of $\ad$ is easier to handle in this context.

\begin{lemma} \label{lem:Ad_vs_ad}
    Let $G$ be a compact connected Lie group with corresponding Lie algebra $\mathfrak{g}$.
    Fix $F \in \mathfrak{g}$, and define $V := \spn(\mathcal{O}(F))$ where $\mathcal{O}(F)$ is the adjoint orbit of $F$.
    Then $V$ is the minimal $\ad$-invariant subspace of $\mathfrak{g}$ containing $F$.
\end{lemma}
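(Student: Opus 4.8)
The plan is to show the two containments: (a) $V = \spn(\mathcal{O}(F))$ is $\ad$-invariant and contains $F$; and (b) $V$ is contained in every $\ad$-invariant subspace containing $F$. Together these say $V$ is the minimal $\ad$-invariant subspace through $F$. The bridge between the two is Proposition~\ref{prop:Lie_exp}, which says $\Ad_{\exp(X)} = \exp(\ad_X)$, together with surjectivity of $\exp$ for compact connected $G$.

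\textbf{Step 1: $V$ is $\ad$-invariant.} First note $F = \Ad_e F \in \mathcal{O}(F) \subseteq V$. Now fix any $X \in \mathfrak{g}$ and any $Y \in \mathcal{O}(F)$; it suffices to show $\ad_X(Y) \in V$ (since the $\ad_X$ are linear and $\mathcal{O}(F)$ spans $V$). By Corollary~\ref{cor:exp_deriv},
\[
    \ad_X(Y) = \left.\frac{d}{dt} \Ad_{\exp(tX)}(Y)\right|_{t=0}.
\]
For each $t$, the curve $\Ad_{\exp(tX)}(Y)$ lies in $\mathcal{O}(F)$ (it is an adjoint image of $Y \in \mathcal{O}(F)$, and $\mathcal{O}(F)$ is a single orbit), hence in the finite-dimensional subspace $V$. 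A derivative of a $V$-valued curve lies in $V$ (closedness of subspaces), so $\ad_X(Y) \in V$. This proves $V$ is $\ad$-invariant.

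\textbf{Step 2: minimality.} Let $W \subseteq \mathfrak{g}$ be any $\ad$-invariant subspace with $F \in W$. I claim $\Ad_g F \in W$ for all $g \in G$, which gives $\mathcal{O}(F) \subseteq W$ and hence $V = \spn(\mathcal{O}(F)) \subseteq W$. Since $G$ is compact and connected, $\exp$ is surjective (Proposition~\ref{prop:Lie_exp}), so write $g = \exp(X)$ for some $X \in \mathfrak{g}$. Then by Proposition~\ref{prop:Lie_exp},
\[
    \Ad_g F = \Ad_{\exp(X)} F = \exp(\ad_X)(F) = \sum_{k=0}^{\infty} \frac{1}{k!} \ad_X^k(F).
\]
Because $W$ is $\ad$-invariant and $F \in W$, each term $\ad_X^k(F) \in W$ by induction on $k$; since $W$ is finite-dimensional hence closed, the convergent series lies in $W$. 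Thus $\Ad_g F \in W$, completing Step 2 and the proof.

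\textbf{Main obstacle.} The only genuine subtlety is justifying the manipulations with $\exp(\ad_X)$ as a convergent power series and the interchange of limit/derivative with membership in $V$ or $W$ — i.e., the fact that finite-dimensional subspaces are closed and that $\exp(\ad_X)$ acting on $\mathfrak{gl}(\mathfrak{g})$ is literally given by its Taylor series (true since $\ad_X$ is a linear endomorphism of the finite-dimensional space $\mathfrak{g}$). Everything else is a direct application of Corollary~\ref{cor:exp_deriv}, Proposition~\ref{prop:Lie_exp}, and surjectivity of $\exp$; connectedness of $G$ enters precisely to guarantee that surjectivity, and hence that the $\Ad$-orbit is captured by the $\ad$-invariant span.
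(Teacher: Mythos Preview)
Your proof is correct and follows essentially the same approach as the paper: both directions use exactly the tools you chose, namely Corollary~\ref{cor:exp_deriv} to show $V$ is $\ad$-invariant (derivative of an orbit curve stays in $V$), and Proposition~\ref{prop:Lie_exp} plus surjectivity of $\exp$ to show any $\ad$-invariant $W \ni F$ contains every $\Ad_g F = \exp(\ad_X)(F)$ via the power series. The only cosmetic difference is that the paper phrases it as proving $V = W$ for $W$ the minimal $\ad$-invariant subspace, whereas you directly verify the two defining properties of minimality.
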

\begin{proof}
    Let $W$ be the minimal $\ad$-invariant subspace of $\mathfrak{g}$ containing $F$. We first show that $W \subseteq V$. For any $X \in \mathfrak{g}$ and $g \in G$, Corollary \ref{cor:exp_deriv} implies
    \[
        \ad_X(\Ad_g(F)) = \left.\frac{d}{dt} \Ad_{\exp(tX)}(\Ad_g(F))\right|_{t=0} = \left.\frac{d}{dt} \Ad_{\exp(tX)g}(F)\right|_{t=0} \in V,
    \]
    where the fact that this is always in $V$ follows from the fact that $\Ad_{\exp(tX)g}(F) \in V$ for all $t$. In general for any $Y \in V$, we have $Y = \sum_i c_i \Ad_{g_i}(F)$, which implies
    \[
        \ad_X(Y) = \sum_i c_i \ad_X(\Ad_{g_i}(F)) \in V.
    \]
    That is, $\ad_X(Y) \in V$ for all $X \in \mathfrak{g}$ and $Y \in V$, and so $W \subseteq V$.

    We now show that $V \subseteq W$. Surjectivity of $\exp$ (Proposition \ref{prop:Lie_exp}) implies that for any $g \in G$ there is an $X \in \mathfrak{g}$ such that $g = \exp(X)$. From this we have
    \[
        \Ad_g(F) = \Ad_{\exp(X)}(F) = \exp(\ad_X)(F) = \sum_{n=0}^\infty \frac{\ad_X^n(F)}{n!} \in W,
    \]
    where the fact this is always in $W$ follows from the fact that $W$ is an $\ad$-invariant subspace of $\mathfrak{g}$ containing $F$. Since $V$ is the span of all such elements $\Ad_g(F)$, we further have that $V \subseteq W$.
\end{proof}

\noindent
Recall that a simple Lie algebra $\mathfrak{g}$ is one for which there are only two $\ad$-invariant subspaces, $V = 0$ and $V = \mathfrak{g}$.
This can be generalized to reductive Lie algebras in the following result.

\begin{proposition}
    Let $\mathfrak{g}$ be a real reductive Lie algebra with decomposition into center and simple components given by $\mathfrak{g} = \mathfrak{z} \oplus \mathfrak{g}_1 \oplus \cdots \oplus \mathfrak{g}_n$. Every $\ad$-invariant subspace $V \subseteq \mathfrak{g}$ is of the form $V = W \oplus V_1 \oplus \cdots \oplus V_n$, where $W$ is any subspace of $\mathfrak{z}$ and $V_i = \{0\}$ or $V_i = \mathfrak{g}_i$ for all $i \in [n]$.
\end{proposition}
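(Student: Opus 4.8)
The plan is to reduce the statement to the classical fact that a simple Lie algebra has no nontrivial $\ad$-invariant subspaces, combined with a complete-reducibility argument for the reductive case. First I would show that an $\ad$-invariant subspace $V$ respects the direct sum decomposition $\mathfrak{g} = \mathfrak{z} \oplus \mathfrak{g}_1 \oplus \cdots \oplus \mathfrak{g}_n$, in the sense that $V = (V \cap \mathfrak{z}) \oplus (V \cap \mathfrak{g}_1) \oplus \cdots \oplus (V \cap \mathfrak{g}_n)$. The key observation is that the decomposition into center and simple ideals is itself $\ad$-invariant: each $\mathfrak{g}_i$ is an ideal (so $\ad_X(\mathfrak{g}_i) \subseteq \mathfrak{g}_i$ for all $X \in \mathfrak{g}$), and $\ad_X$ kills $\mathfrak{z}$ by definition of the center. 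Moreover $[\mathfrak{g}_i, \mathfrak{g}_j] = 0$ for $i \neq j$ since both are ideals with zero intersection, and $[\mathfrak{g}_i,\mathfrak{g}_i] = \mathfrak{g}_i$ since $\mathfrak{g}_i$ is simple (hence nonabelian). Now take any $Y \in V$ and write $Y = Y_{\mathfrak{z}} + Y_1 + \cdots + Y_n$ according to the decomposition. Applying $\ad_X$ for $X \in \mathfrak{g}_i$ repeatedly isolates the $\mathfrak{g}_i$-component: since $\ad_X$ annihilates $\mathfrak{z}$ and all $\mathfrak{g}_j$ with $j \neq i$, we get $\ad_X(Y) = \ad_X(Y_i) \in V$, and because $[\mathfrak{g}_i, \mathfrak{g}_i] = \mathfrak{g}_i$, iterating shows that the smallest $\ad$-invariant subspace generated by $Y_i$ inside $\mathfrak{g}_i$ lies in $V$; since $\mathfrak{g}_i$ is simple this subspace is either $\{0\}$ or all of $\mathfrak{g}_i$, and in either case $Y_i \in V$. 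Hence $Y - Y_i \in V$ as well, and by induction over $i$ each $Y_i \in V$ and $Y_{\mathfrak{z}} = Y - \sum_i Y_i \in V$.

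Once we know $V = W \oplus V_1 \oplus \cdots \oplus V_n$ with $W = V \cap \mathfrak{z}$ and $V_i = V \cap \mathfrak{g}_i$, the remaining points are immediate. The subspace $W \subseteq \mathfrak{z}$ is automatically $\ad$-invariant (everything in $\mathfrak{z}$ is, since $\ad$ acts trivially there), so $W$ can be any subspace of $\mathfrak{z}$ with no further constraint. Each $V_i$ is an $\ad_{\mathfrak{g}}$-invariant subspace contained in $\mathfrak{g}_i$; restricting the action, $V_i$ is in particular $\ad_{\mathfrak{g}_i}$-invariant, and simplicity of $\mathfrak{g}_i$ forces $V_i = \{0\}$ or $V_i = \mathfrak{g}_i$. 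Conversely, any subspace of the stated form $W \oplus V_1 \oplus \cdots \oplus V_n$ is clearly $\ad$-invariant, using again that $[\mathfrak{g}, \mathfrak{g}_i] \subseteq \mathfrak{g}_i$ and $[\mathfrak{g}, \mathfrak{z}] = 0$, which gives the full characterization.

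The main obstacle is the first step: rigorously extracting the component $Y_i$ from $Y$ using only $\ad$-invariance. The clean way is to use that the projection $\pi_i : \mathfrak{g} \to \mathfrak{g}_i$ commutes with the $\ad$-action — this is precisely the statement that the decomposition is a decomposition of $\mathfrak{g}$ as an $\ad$-module, which follows because the $\mathfrak{g}_i$ and $\mathfrak{z}$ are ideals. Given module-morphism status of $\pi_i$, invariance of $V$ immediately gives $\pi_i(V) \subseteq V$ (the image lands in $\mathfrak{g}_i \subseteq \mathfrak{g}$, and one checks it lands in $V$ since $\pi_i$ restricted to $V$ is an idempotent $\ad$-equivariant map whose image is an invariant subspace of the simple $\mathfrak{g}_i$); alternatively one argues directly with the bracket as sketched above, which avoids invoking Weyl's complete reducibility theorem wholesale. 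I would present the direct bracket argument since it keeps the proof self-contained and matches the elementary tone of this section.
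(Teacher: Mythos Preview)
Your argument is correct and rests on the same core mechanism as the paper's proof: for $X \in \mathfrak{g}_i$ one has $\ad_X(Y) = \ad_X(Y_i) \in V \cap \mathfrak{g}_i$, and simplicity of $\mathfrak{g}_i$ then forces $\mathfrak{g}_i \subseteq V$ whenever this intersection is nonzero. The organization differs, however. The paper does not decompose a general $Y \in V$ component by component; instead it selects a single element $F \in V$ whose support set $S = \{i : F_i \neq 0\}$ is maximal (using that the base field is infinite), shows $\mathfrak{g}_i \subseteq V$ for each $i \in S$ via the same bracket trick, and then invokes maximality of $S$ to conclude. Your element-by-element approach is slightly longer but more transparent, and it also explicitly verifies the converse direction and recovers the central part $W = V \cap \mathfrak{z}$, both of which the paper leaves implicit. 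One wording quibble: the phrase ``the smallest $\ad$-invariant subspace generated by $Y_i$'' literally contains $Y_i$, which you have not yet placed in $V$; what you actually show lies in $V$ is $\sum_{k \geq 1} \ad_{\mathfrak{g}_i}^k(Y_i)$, which is itself $\ad$-invariant and, when $Y_i \neq 0$, nonzero (since $\mathfrak{g}_i$ has trivial center) and hence equal to $\mathfrak{g}_i$ --- so $Y_i \in V$ after all.
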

\begin{proof}
    Let $V$ be $\ad$-invariant, and let $F = Z \oplus F_1 \oplus \cdots \oplus F_n$ be an element of $V$ for which $S := \{i \in [n] ~:~ F_i \neq 0\}$ is the unique maximum with respect to the inclusion order over all such $F$. (Note that this maximum is unique because the base field is infinite.) Now fix any $i \in S$, and let $\iota_i: \mathfrak{g}_i \to \mathfrak{g}$ denote the natural inclusion. Since $\mathfrak{g}_i$ is simple and $F_i \neq 0$, we can pick $X_i \in \mathfrak{g}_i$ such that $\ad_{X_i}(F_i) \neq 0$ and $0 \neq \ad_{\iota_i(X_i)}(F) = \iota_i(\ad_{X_i}(F_i)) \in V$. By simplicity of $\mathfrak{g}_i$, this implies $\iota_i(\mathfrak{g}_i) \subseteq V$. The result then follows from the maximality of $S$.
\end{proof}

\noindent
We now apply this to spans of adjoint orbits.

\begin{corollary} \label{cor:ad_subspace}
    Let $G$ be a compact real Lie group with reductive Lie algebra $\mathfrak{g} = \mathfrak{z} \oplus \mathfrak{g}_1 \oplus \cdots \oplus \mathfrak{g}_n$, and fix any $F = Z \oplus F_1 \oplus \cdots \oplus F_n \in \mathfrak{g}$. Then the minimal $\ad$-invariant subspace of $\mathfrak{g}$ containing $F$ is $\spn\{Z\} \oplus V_1 \oplus \cdots \oplus V_n$ where for all $i \in [n]$, $V_i = \mathfrak{g_i}$ if $F_i \neq 0$ and $V_i = \{0\}$ otherwise.
\end{corollary}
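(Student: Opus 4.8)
The plan is to combine the two results immediately preceding this corollary, namely Lemma \ref{lem:Ad_vs_ad} (which identifies $\spn(\mathcal{O}(F))$ with the minimal $\ad$-invariant subspace containing $F$) and the structural proposition classifying all $\ad$-invariant subspaces of a reductive Lie algebra. Strictly speaking, the corollary as stated is only about the minimal $\ad$-invariant subspace, so Lemma \ref{lem:Ad_vs_ad} is not even needed for the literal statement; it is needed only if one also wants to conclude the same description for $\spn(\mathcal{O}(F))$, which is the intended use. So I would structure the argument in two short steps.

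First I would apply the classification proposition to $W$, the minimal $\ad$-invariant subspace of $\mathfrak{g}$ containing $F$. By that proposition, $W = W_0 \oplus V_1 \oplus \cdots \oplus V_n$ for some subspace $W_0 \subseteq \mathfrak{z}$ and with each $V_i$ equal to $\{0\}$ or $\mathfrak{g}_i$. Now I would pin down each piece using minimality together with the requirement $F \in W$. Writing $F = Z \oplus F_1 \oplus \cdots \oplus F_n$, membership $F \in W$ forces $Z \in W_0$, hence $\spn\{Z\} \subseteq W_0$; but $\spn\{Z\}$ is itself $\ad$-invariant (the center is killed by $\ad$, and indeed every subspace of $\mathfrak{z}$ is $\ad$-invariant), and $\spn\{Z\} \oplus V_1 \oplus \cdots \oplus V_n$ is $\ad$-invariant and contains $F$ provided each $V_i$ is chosen appropriately, so minimality gives $W_0 = \spn\{Z\}$. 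Similarly, for each $i$ with $F_i \neq 0$, membership $F \in W$ forces the $\mathfrak{g}_i$-component $F_i$ of $F$ to lie in $V_i$, so $V_i \neq \{0\}$ and hence $V_i = \mathfrak{g}_i$; conversely, for each $i$ with $F_i = 0$, the subspace obtained by replacing $V_i$ with $\{0\}$ is still $\ad$-invariant (a direct sum of $\ad$-invariant summands) and still contains $F$, so minimality forces $V_i = \{0\}$. This yields exactly $W = \spn\{Z\} \oplus V_1 \oplus \cdots \oplus V_n$ with $V_i = \mathfrak{g}_i$ iff $F_i \neq 0$.

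Finally, I would invoke Lemma \ref{lem:Ad_vs_ad}: since $G$ is compact and (assumed) connected, $V := \spn(\mathcal{O}(F))$ equals the minimal $\ad$-invariant subspace containing $F$, which we have just computed to be $\spn\{Z\} \oplus \bigoplus_i V_i$. I do not expect any real obstacle here — the corollary is a direct assembly of the preceding lemma and proposition — but the one point requiring a line of care is the verification that each candidate subspace $\spn\{Z\}\oplus\bigoplus_{i\in S}\mathfrak{g}_i$ is genuinely $\ad$-invariant and contains $F$, so that minimality can be applied in both directions (to force the summands down when $F_i = 0$ and up when $F_i \neq 0$); this uses Proposition \ref{prop:Killing_orthogonal}'s decomposition, the fact that $\ad$ respects the direct-sum decomposition of $\mathfrak{g}$, and simplicity of each $\mathfrak{g}_i$. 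One should also note, as the classification proposition's proof does, that the underlying field being infinite is what lets us pick a single $F \in V$ realizing the maximal support set — but for the present corollary $F$ is given, so this subtlety does not arise.
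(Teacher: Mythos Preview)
Your proposal is correct and follows essentially the same approach as the paper: apply the classification proposition for $\ad$-invariant subspaces of a reductive Lie algebra, then use minimality and the containment $F\in W$ to pin down each summand. The paper's own proof is a one-liner noting that $\spn\{Z\}$ is $\ad$-invariant (since $Z$ is central) and then deferring to the previous proposition; your version simply spells out the minimality argument in each summand more carefully, and you are right that Lemma~\ref{lem:Ad_vs_ad} is not needed for the corollary as literally stated.
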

\begin{proof}
    Since $\ad_X(Z) = 0$ for every $X \in \mathfrak{z}$ ($\mathfrak{z}$ is Abelian), the result follows from the previous proposition.
\end{proof}

\noindent
We next use these facts regarding $\ad$-invariant subspaces to determine a maximal set of affine equalities for $\hull(\mathcal{O}(F))$.
First, we need a lemma.

\begin{lemma} \label{lem:center_action}
    Let $G$ be a compact connected real Lie group with reductive Lie algebra $\mathfrak{g} = \mathfrak{z} \oplus \mathfrak{g}_0$ where $\mathfrak{z}$ is the center of $\mathfrak{g}$. For any $F = Z \oplus X \in \mathfrak{z} \oplus \mathfrak{g}_0$ and any $g \in G$, we have that $\Ad_g(F) = Z \oplus X'$ for some $X' \in \mathfrak{g}_0$.
\end{lemma}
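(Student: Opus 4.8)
The plan is to decompose $F$ according to the direct sum $\mathfrak{g} = \mathfrak{z} \oplus \mathfrak{g}_0$ and show that the adjoint action leaves the central component fixed and preserves the semisimple part $\mathfrak{g}_0$. Write $F = Z \oplus X$ with $Z \in \mathfrak{z}$ and $X \in \mathfrak{g}_0$, and fix $g \in G$. Since $G$ is compact and connected, Proposition \ref{prop:Lie_exp} gives surjectivity of $\exp$, so we may write $g = \exp(W)$ for some $W \in \mathfrak{g}$; then $\Ad_g = \Ad_{\exp(W)} = \exp(\ad_W)$. Thus it suffices to understand how $\exp(\ad_W)$ acts on $Z$ and on $X$ separately.

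First I would handle the central component. Because $Z \in \mathfrak{z}$, we have $\ad_W(Z) = -\ad_Z(W) = 0$ for every $W \in \mathfrak{g}$ (using antisymmetry of the bracket, equivalently $B(\ad_Z(\cdot),\cdot)$-type identities, or just the definition of the center applied via $\ad_Z = 0$). Hence $\exp(\ad_W)(Z) = Z + \ad_W(Z) + \tfrac{1}{2}\ad_W^2(Z) + \cdots = Z$. Next, for the semisimple part: $\mathfrak{g}_0 = \mathfrak{g}_1 \oplus \cdots \oplus \mathfrak{g}_n$ is an $\ad$-invariant subspace of $\mathfrak{g}$ (each simple summand $\mathfrak{g}_i$ is $\ad$-invariant by definition, and $\mathfrak{z}$ commutes with everything so brackets never leave $\mathfrak{g}_0$ — indeed $\mathfrak{g}_0 = [\mathfrak{g},\mathfrak{g}]$). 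Therefore $\ad_W(X) \in \mathfrak{g}_0$ for all $W$, and inductively $\ad_W^k(X) \in \mathfrak{g}_0$, so the convergent series $\exp(\ad_W)(X) = \sum_{k \geq 0} \ad_W^k(X)/k!$ lies in the closed subspace $\mathfrak{g}_0$. Set $X' := \exp(\ad_W)(X) \in \mathfrak{g}_0$. Then $\Ad_g(F) = \exp(\ad_W)(Z \oplus X) = Z \oplus X'$, which is exactly the claimed form.

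The main (and only real) obstacle is making sure the series manipulations are justified — that $\exp(\ad_W)$ genuinely acts term-by-term and that $\mathfrak{g}_0$ is closed under $\ad_W$, both of which are routine: $\ad_W$ is a linear endomorphism of the finite-dimensional space $\mathfrak{g}$, so $\exp(\ad_W)$ is an ordinary matrix exponential and any $\ad_W$-invariant subspace is automatically $\exp(\ad_W)$-invariant. One should also double-check that $\mathfrak{g}_0$ is $\ad$-invariant in $\mathfrak{g}$ (not merely that each $\mathfrak{g}_i$ is $\ad$-invariant within itself): this follows because $\ad_W(\mathfrak{g}_i) \subseteq \mathfrak{g}_i$ for $W \in \mathfrak{g}_j$ when $i = j$ and equals $0$ when $i \neq j$ or $W \in \mathfrak{z}$ — the simple ideals pairwise commute and commute with the center. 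With those observations in place the lemma is immediate, and it is precisely the input needed later to conclude (via the reductive decomposition) that the central component of $F$ is a fixed affine equality for $\mathcal{O}(F)$.
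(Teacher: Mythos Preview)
Your proof is correct and follows essentially the same approach as the paper: both use surjectivity of $\exp$ (Proposition \ref{prop:Lie_exp}) to write $g = \exp(W)$, expand $\Ad_g = \exp(\ad_W)$ as a power series, and exploit that brackets $\ad_W(\cdot)$ land in $\mathfrak{g}_0$. The paper phrases the last step as $\pi_c(\ad_W(Y)) = 0$ for all $Y$, whereas you split it into $\ad_W(Z) = 0$ and $\ad_W(\mathfrak{g}_0) \subseteq \mathfrak{g}_0$; these are equivalent given the direct sum decomposition.
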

\begin{proof}
    Let $\pi_c: \mathfrak{g} \to \mathfrak{z}$ be the canonical projection map. Since $G$ is compact connected, for all $g \in G$ there is an $X \in \mathfrak{g}$ such that $g = \exp(X)$ (by Proposition \ref{prop:Lie_exp}). This implies
    \[
        \Ad_g(F) = \exp(\ad_X)(F) = \sum_{n=0}^\infty \frac{\ad_X^n(F)}{n!} = F + \sum_{n=0}^\infty \frac{\ad_X(\ad_X^n(F))}{(n+1)!}.
    \]
    Since $\pi_c(\ad_X(Y)) = 0$ for all $Y \in \mathfrak{g}$, this further implies $\pi_c(\Ad_g(F)) = \pi_c(F) = Z$ for all $g \in G$.
\end{proof}

\begin{proposition}[\textbf{Maximal set of affine equalities}] \label{prop:orbit_max_equalities}
    Let $G$ be a compact connected real Lie group with reductive Lie algebra $\mathfrak{g} = \mathfrak{z} \oplus \mathfrak{g}_1 \oplus \cdots \oplus \mathfrak{g}_n$, and fix any $F = Z \oplus F_1 \oplus \cdots \oplus F_n \in \mathfrak{g}$. If $\pi_c,\pi_1,\ldots,\pi_n$ are the natural projections of $\mathfrak{g}$ onto $\mathfrak{z}, \mathfrak{g}_1, \ldots, \mathfrak{g}_n$ respectively, then
    \[
        \left\{\pi_i(X) = 0 ~:~ F_i = 0\right\} \cup \left\{\pi_c(X) = Z\right\}
    \]
    is a maximal set of affine equalities satisfied by $\mathcal{O}(F)$.
\end{proposition}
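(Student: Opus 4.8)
The plan is to verify two things: first, that every point of $\mathcal{O}(F)$ satisfies the listed equalities (so the affine span of $\mathcal{O}(F)$ is contained in the solution set), and second, that this set of equalities is maximal, i.e. the affine span of $\mathcal{O}(F)$ equals the solution set exactly. The containment is essentially already done: Lemma \ref{lem:center_action} (applied with $\mathfrak{g}_0 = \mathfrak{g}_1 \oplus \cdots \oplus \mathfrak{g}_n$, or iterated over the simple factors) gives $\pi_c(\Ad_g(F)) = Z$ for all $g \in G$, and Corollary \ref{cor:ad_subspace} combined with Lemma \ref{lem:Ad_vs_ad} gives $\spn(\mathcal{O}(F)) = \spn\{Z\} \oplus V_1 \oplus \cdots \oplus V_n$ with $V_i = \mathfrak{g}_i$ when $F_i \neq 0$ and $V_i = \{0\}$ when $F_i = 0$; in particular $\pi_i(\Ad_g(F)) = 0$ whenever $F_i = 0$. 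So every element of $\mathcal{O}(F)$ lies in the affine subspace cut out by the stated equalities.

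For maximality, I would argue that the affine hull of $\mathcal{O}(F)$ is exactly $Z + (V_1 \oplus \cdots \oplus V_n)$, which is precisely the solution set of the listed system. The key point is that $\spn(\mathcal{O}(F)) = \spn\{Z\} \oplus V_1 \oplus \cdots \oplus V_n$ by the above, but we need the \emph{affine} hull, not the linear span. Since $F \in \mathcal{O}(F)$, the affine hull is $F + \spn\{\mathcal{O}(F) - F\}$; I claim $\spn\{\mathcal{O}(F) - F\} = V_1 \oplus \cdots \oplus V_n$ (the semisimple part, with no $\mathfrak{z}$-component). Indeed, every $\Ad_g(F) - F$ has vanishing $\mathfrak{z}$-component by Lemma \ref{lem:center_action}, so the span is contained in $V_1 \oplus \cdots \oplus V_n$. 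Conversely, on each simple factor $\mathfrak{g}_i$ with $F_i \neq 0$, the differences $\Ad_g(F) - F$ project onto $\ad_{X}(F_i) + O(\text{higher order})$ directions, and more carefully: $\spn(\mathcal{O}(F))$ contains all of $\mathfrak{g}_i$, and since $F_i \in \mathfrak{g}_i$, subtracting $F$ shows $\spn\{\mathcal{O}(F) - F\}$ contains $\mathfrak{g}_i$ as well (any $v \in \mathfrak{g}_i$ can be written as a combination of $\Ad_{g_j}(F)$'s, and adjusting by a multiple of $F$ removes the $Z$-component while staying in $\mathfrak{g}_i$). Hence the affine hull of $\mathcal{O}(F)$ is $F + (V_1 \oplus \cdots \oplus V_n) = Z + (V_1 \oplus \cdots \oplus V_n)$, matching the solution set of the system exactly. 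This gives maximality.

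I would also note that the listed equalities are linearly independent as functionals on $\mathfrak{g}$: the constraints $\pi_i(X) = 0$ for distinct indices $i$ involve disjoint coordinate blocks, and $\pi_c(X) = Z$ involves the $\mathfrak{z}$-block, which is disjoint from all the $\mathfrak{g}_i$-blocks. So after choosing the orthonormal basis adapted to the decomposition $\mathfrak{g} = \mathfrak{z} \oplus \mathfrak{g}_1 \oplus \cdots \oplus \mathfrak{g}_n$, the system is represented by a $0$-$1$ matrix of full row rank, and the right-hand side $b$ has the same bit complexity as $F$ (it only records the $\mathfrak{z}$-component $Z$ of $F$), as claimed in the main proof.

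The main obstacle I anticipate is the converse inclusion in the affine-hull computation — being careful that passing from the linear span $\spn(\mathcal{O}(F))$ (which is handled cleanly by Lemma \ref{lem:Ad_vs_ad} and Corollary \ref{cor:ad_subspace}) to the affine hull $\mathrm{aff}(\mathcal{O}(F))$ does not lose a dimension in the center. Since $Z$ is fixed by the whole orbit (Lemma \ref{lem:center_action}), the affine hull genuinely sits inside the hyperplane $\pi_c(X) = Z$, so the central direction is a true equality constraint rather than a spurious one, and one must confirm that no \emph{additional} equality is forced on the semisimple blocks where $F_i \neq 0$ — which is exactly what simplicity of $\mathfrak{g}_i$ (via Corollary \ref{cor:ad_subspace}) rules out.
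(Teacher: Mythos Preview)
Your containment direction and your use of Lemma~\ref{lem:Ad_vs_ad}, Corollary~\ref{cor:ad_subspace}, and Lemma~\ref{lem:center_action} match the paper exactly. The gap is in your converse inclusion $V_1 \oplus \cdots \oplus V_n \subseteq \spn\{\mathcal{O}(F)-F\}$. Your argument (write $v\in\mathfrak{g}_i$ as $\sum_j c_j\,\Ad_{g_j}(F)$ and ``adjust by a multiple of $F$ to remove the $Z$-component'') goes through when $Z\neq 0$: projecting onto $\mathfrak{z}$ forces $\sum_j c_j=0$, and then $v=\sum_j c_j(\Ad_{g_j}(F)-F)$. But when $Z=0$ nothing forces $\sum_j c_j=0$, and you cannot pass from $v\in\spn(\mathcal{O}(F))$ to $v\in\spn\{\mathcal{O}(F)-F\}$ this way. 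Corollary~\ref{cor:ad_subspace} only identifies the \emph{linear} span of the orbit; it does not by itself prevent the affine hull from losing one dimension on a simple block. So the obstacle you correctly flag at the end is not actually dispatched by that corollary.

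The paper splits into cases here. For $Z\neq 0$ it uses a dimension count equivalent to your argument. For $Z=0$ it gives a separate $\Ad$-invariance argument: if an additional equality $B(\cdot,Y)=c$ held on $\mathcal{O}(F)$ with $Y$ non-central, then invariance of the Killing form yields $B(\Ad_g F,\,Y-\Ad_h Y)=0$ for all $g,h$; since $Y$ is non-central some $\Ad_h$ moves it, forcing $\mathcal{O}(F)$ into a proper subspace and contradicting the span computation. A short uniform fix you could slot into your write-up instead: the barycenter $\int_G \Ad_g(F)\,dg$ is $\Ad$-invariant, hence lies in $\mathfrak{z}$, and its $\pi_c$-component is $Z$, so it equals $Z$; thus $Z\in\mathrm{aff}(\mathcal{O}(F))$, which immediately gives $\mathrm{aff}(\mathcal{O}(F))=Z+(V_1\oplus\cdots\oplus V_n)$ in both cases.
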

\begin{proof}
    Combining Lemma \ref{lem:Ad_vs_ad} with Corollary \ref{cor:ad_subspace} implies $\spn(\mathcal{O}(F)) = \spn\{Z\} \oplus V_1 \oplus \cdots \oplus V_n$ where for all $i \in [n]$, $V_i = \mathfrak{g}_i$ if $F_i \neq 0$ and $V_i = \{0\}$ otherwise.
    Therefore the affine span of $\mathcal{O}(F)$ is of dimension at least $\dim(\spn\{Z\} \oplus V_1 \oplus \cdots \oplus V_n) - 1$.
    
    First suppose $Z \neq 0$. By Lemma \ref{lem:center_action}, we then have that $\pi_c(\Ad_g(F)) = \pi_c(F) = Z$ for all $g \in G$. Therefore the affine span of $\mathcal{O}(F)$ satisfies the set of affine equalities specified above, and this set is maximal by the dimension property of $\mathcal{O}(F)$ mentioned above.

    Now suppose $Z = 0$. So as to get a contradiction, suppose that there is an additional affine equality satisfied by $\mathcal{O}(F)$, given by $B(\Ad_g F, Y) = c$ where $B$ is the Killing form, $Y$ is a non-central element of $\mathfrak{g}$, and $c \in \R$. By $\Ad$-invariance of $B$, we further have that
    \[
        B(\Ad_g F, \Ad_h Y) = c
    \]
    for all $g,h \in G$. Since $Y \not\in \mathfrak{z}$, there is some $h \in G$ such that $\Ad_h Y \neq Y$. This implies
    \[
        B(\Ad_g F, Y - \Ad_h Y) = 0
    \]
    for all $g \in G$. That is, $\mathcal{O}(F)$ is contained in a subspace of $\mathfrak{g}$, a contradiction. Therefore, the above set of affine equalities is maximal.
\end{proof}

\noindent
The last results of this section then show how the maximal set of affine equalities implies orthogonality of the center of $\mathfrak{g}$ with respect to any $\Ad$-invariant bilinear form.
Combining this with Proposition \ref{prop:Killing_orthogonal} then characterizes all possible $\Ad$-invariant inner products which extend the negative of the Killing form.

\begin{corollary}[\textbf{Center orthogonality}] \label{cor:center_orthogonal}
    Let $B$ be an $\Ad$-invariant symmetric bilinear form on a real reductive Lie algebra $\mathfrak{g} = \mathfrak{z} \oplus \mathfrak{g}_0$, where $\mathfrak{z}$ is the center of $\mathfrak{g}$ and $\mathfrak{g}_0$ is the semisimple part of $\mathfrak{g}$. Then for all $Z \in \mathfrak{z} \oplus \{0\}$ and $X \in \{0\} \oplus \mathfrak{g}_0$ we have $B(Z,X) = 0$.
\end{corollary}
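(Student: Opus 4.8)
The plan is to exploit the $\Ad$-invariance of $B$ together with the structural fact that $\mathfrak{z}$ is central, i.e. that $\ad_X(Z) = 0$ for every $X \in \mathfrak{g}$ and $Z \in \mathfrak{z}$. The key observation is that the infinitesimal invariance identity $B(\ad_X(Y), Y') + B(Y, \ad_X(Y')) = 0$ degenerates nicely when one of the arguments is central. First I would fix $Z \in \mathfrak{z}$ and an arbitrary $X' = \ad_X(Y)$ lying in the image of some $\ad_X$ restricted to the semisimple part; applying the invariance identity with the central element $Z$ in the first slot gives $B(\ad_X(Z), Y) + B(Z, \ad_X(Y)) = B(Z, \ad_X(Y))$ since $\ad_X(Z) = 0$, and this expression equals $-B(\ad_X(Z), Y) = 0$ as well — wait, more carefully, the identity reads $B(\ad_X(Z), Y) = -B(Z, \ad_X(Y))$, and the left side vanishes, so $B(Z, \ad_X(Y)) = 0$ for all $X, Y \in \mathfrak{g}$.

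The remaining step is to upgrade ``$B(Z, W) = 0$ for all $W$ in the span of commutators'' to ``$B(Z, X) = 0$ for all $X \in \mathfrak{g}_0$.'' This is where I expect the only real content to lie: one must show that $\mathfrak{g}_0 = \{0\} \oplus (\mathfrak{g}_1 \oplus \cdots \oplus \mathfrak{g}_n)$ is spanned by elements of the form $\ad_X(Y)$. Since each $\mathfrak{g}_i$ is simple, it satisfies $[\mathfrak{g}_i, \mathfrak{g}_i] = \mathfrak{g}_i$ (a simple Lie algebra equals its own derived algebra — if it did not, the derived algebra would be a proper nonzero ideal, contradicting simplicity, using that $\mathfrak{g}_i$ is nonabelian as it has trivial center). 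Hence $\mathfrak{g}_0 = \sum_i [\mathfrak{g}_i, \mathfrak{g}_i] = \sum_i \ad_{\mathfrak{g}_i}(\mathfrak{g}_i)$, so every $X \in \mathfrak{g}_0$ is a finite sum of terms $\ad_{X_j}(Y_j)$ with $X_j, Y_j \in \mathfrak{g}$. By bilinearity of $B$ and the vanishing established in the previous step, $B(Z, X) = \sum_j B(Z, \ad_{X_j}(Y_j)) = 0$.

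Alternatively — and this may be cleaner to write — one can bypass the commutator argument entirely by invoking Proposition \ref{prop:Killing_orthogonal} together with the already-proven Proposition \ref{prop:orbit_max_equalities}: the latter shows $\spn(\mathcal{O}(F)) = \spn\{Z\} \oplus V_1 \oplus \cdots \oplus V_n$, and more directly, any $\Ad$-invariant form must vanish between an $\ad$-invariant subspace on which $\ad$ acts trivially (the center) and one on which it acts without trivial summands; but the self-contained derived-algebra argument above is shortest and relies only on simplicity and bilinearity. The main obstacle, such as it is, is simply making sure the base-field/real-form subtleties do not interfere — but since simplicity of each $\mathfrak{g}_i$ is assumed (not just semisimplicity) and we only use $[\mathfrak{g}_i,\mathfrak{g}_i] = \mathfrak{g}_i$, which holds over $\R$ for real simple Lie algebras, there is no genuine difficulty. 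I would therefore write the proof in three short lines: invariance identity with a central argument kills $B(Z, \ad_X Y)$; simplicity gives $\mathfrak{g}_0 = [\mathfrak{g}_0, \mathfrak{g}_0]$; bilinearity concludes.
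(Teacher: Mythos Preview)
Your proof is correct but takes a genuinely different route from the paper. The paper's argument recycles the just-proven Proposition~\ref{prop:orbit_max_equalities}: it observes that $B(Z,\Ad_g X) = B(\Ad_g Z, \Ad_g X) = B(Z,X)$ for all $g$ (using Lemma~\ref{lem:center_action} to get $\Ad_g Z = Z$), so $B(Z,\cdot)$ is constant on $\mathcal{O}(X)$; maximality of the affine equalities then forces $B(Z,\cdot)$ to be constant on the full linear subspace spanned by $\mathcal{O}(X)$, and a linear functional constant on a subspace is zero there. Your argument instead drops to the infinitesimal level: from $\Ad$-invariance one differentiates to get $B(\ad_X Z, Y) + B(Z, \ad_X Y) = 0$, centrality of $Z$ kills the first term, and the standard fact $[\mathfrak{g}_0,\mathfrak{g}_0]=\mathfrak{g}_0$ for semisimple $\mathfrak{g}_0$ finishes it. Your route is shorter, entirely self-contained, and does not even need connectedness of $G$ (since passing from $\Ad$- to $\ad$-invariance only uses $\Ad_{\exp(tX)}$, which exists regardless); the paper's route has the virtue of staying at the group level and illustrating that the orbit-span machinery already encodes this orthogonality.
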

\begin{proof}
    Let $\mathfrak{g} = \mathfrak{z} \oplus \mathfrak{g}_1 \oplus \cdots \oplus \mathfrak{g}_n$ be the decomposition of $\mathfrak{g}$ into its center and simple components, and let $\pi_c,\pi_1,\ldots,\pi_n$ be the projections onto each component. By Proposition \ref{prop:orbit_max_equalities}, we have that
    \[
        \{\pi_i(Y) = 0 ~:~ \pi_i(X) = 0\} \cup \{\pi_c(Y) = 0\}
    \]
    is a maximal set of affine equalities satisfied by $\mathcal{O}(X)$. Further by Lemma \ref{lem:center_action}, for all $g \in G$ we have
    \[
        B(Z, X) = B(\Ad_g Z, \Ad_g X) = B(Z, \Ad_g X),
    \]
    which implies $B(Z, Y) = B(Z, X)$ for all $Y \in \mathcal{O}(X)$. This is an affine equality satisfied by $\mathcal{O}(X)$, and so it must be implied by the those listed above. So in fact for any $Y$ which lies in the subspace of $\mathfrak{g}$ cut out by the above set of affine equalities, we have that $B(Z, Y) = B(Z, X)$. That is, the linear functional $B(Z, \cdot)$ is constant on a vector subspace and therefore must be 0 on that subspace. Therefore $B(Z, X) = 0$.
\end{proof}

\begin{corollary}[\textbf{Killing form inner product}] \label{cor:extend_Killing}
    Let $\langle \cdot, \cdot \rangle$ be an $\Ad$-invariant inner product on a real reductive Lie algebra $\mathfrak{g} = \mathfrak{z} \oplus \mathfrak{g}_0$ such that for all $X,Y \in \mathfrak{g}_0$ we have
    \[
        \langle X, Y \rangle = -B(X, Y),
    \]
    where $\mathfrak{z}$ is the center of $\mathfrak{g}$, $\mathfrak{g}_0$ is the semisimple part of $\mathfrak{g}$, and $B$ is the Killing form of $\mathfrak{g}$.
    Then $\langle \cdot, \cdot \rangle$ is $\Ad$-invariant if and only if $\mathfrak{z}$ and $\mathfrak{g}_0$ are orthogonal subspaces.
\end{corollary}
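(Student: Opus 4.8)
The plan is to prove the two directions of the equivalence separately. The forward implication will be an immediate consequence of Corollary \ref{cor:center_orthogonal}, while the reverse implication requires a short block-diagonalization argument using surjectivity of the exponential map and the ideal structure of $\mathfrak{g}$.

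For the ``only if'' direction, suppose $\langle \cdot, \cdot \rangle$ is $\Ad$-invariant. Then in particular $\langle \cdot, \cdot \rangle$ is an $\Ad$-invariant symmetric bilinear form on $\mathfrak{g}$, so Corollary \ref{cor:center_orthogonal} applied with $B := \langle \cdot, \cdot \rangle$ yields $\langle Z, X \rangle = 0$ for all $Z \in \mathfrak{z}$ and $X \in \mathfrak{g}_0$; that is, $\mathfrak{z}$ and $\mathfrak{g}_0$ are orthogonal.

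For the ``if'' direction, assume $\mathfrak{z}$ and $\mathfrak{g}_0$ are orthogonal with respect to $\langle \cdot, \cdot \rangle$. First I would record two facts about the adjoint action. Fix $g \in G$; since $G$ is compact and connected, Proposition \ref{prop:Lie_exp} provides $W \in \mathfrak{g}$ with $g = \exp(W)$ and $\Ad_g = \exp(\ad_W)$. Every $Z \in \mathfrak{z}$ satisfies $\ad_W(Z) = 0$, hence $\Ad_g(Z) = Z$, so $\Ad_g$ fixes $\mathfrak{z}$ pointwise; and since each simple ideal $\mathfrak{g}_i$ is $\ad$-invariant, $\exp(\ad_W)$ maps $\mathfrak{g}_i$ into itself, so $\Ad_g(\mathfrak{g}_0) \subseteq \mathfrak{g}_0$. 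Now write arbitrary $U = U_\mathfrak{z} + U_0$ and $V = V_\mathfrak{z} + V_0$ with $U_\mathfrak{z}, V_\mathfrak{z} \in \mathfrak{z}$ and $U_0, V_0 \in \mathfrak{g}_0$. Then $\Ad_g(U) = U_\mathfrak{z} + \Ad_g(U_0)$ and likewise for $V$; expanding $\langle \Ad_g(U), \Ad_g(V) \rangle$ bilinearly, the two cross terms vanish by orthogonality of $\mathfrak{z}$ and $\mathfrak{g}_0$ (using $\Ad_g(U_0), \Ad_g(V_0) \in \mathfrak{g}_0$). For the surviving term, the hypothesis $\langle \cdot, \cdot \rangle = -B$ on $\mathfrak{g}_0$ together with $\Ad$-invariance of the Killing form $B$ on all of $\mathfrak{g}$ gives $\langle \Ad_g(U_0), \Ad_g(V_0) \rangle = -B(\Ad_g(U_0), \Ad_g(V_0)) = -B(U_0, V_0) = \langle U_0, V_0 \rangle$. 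Reassembling, $\langle \Ad_g(U), \Ad_g(V) \rangle = \langle U_\mathfrak{z}, V_\mathfrak{z} \rangle + \langle U_0, V_0 \rangle = \langle U, V \rangle$. As $g$ was arbitrary, $\langle \cdot, \cdot \rangle$ is $\Ad$-invariant. (One can phrase this more slickly: under the orthogonality hypothesis $\langle \cdot, \cdot \rangle$ is the orthogonal direct sum of an inner product on $\mathfrak{z}$, which $\Ad$ fixes pointwise, and $-B|_{\mathfrak{g}_0}$, which is $\Ad$-invariant since $B$ is; a direct sum of invariant forms on complementary invariant subspaces is invariant.)

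There is no serious obstacle here; the only step needing a little care is the claim that $\Ad_g$ fixes $\mathfrak{z}$ pointwise and preserves $\mathfrak{g}_0$, which is why I would invoke surjectivity of $\exp$ for compact connected groups (Proposition \ref{prop:Lie_exp}) together with the reductive decomposition. If one prefers the infinitesimal formulation of $\Ad$-invariance (the $\ad$-skew-symmetry identity), it is equivalent to $\Ad_g$-invariance for connected $G$, so the same argument applies verbatim; Proposition \ref{prop:Killing_orthogonal} is not needed beyond knowing $B$ is $\Ad$-invariant.
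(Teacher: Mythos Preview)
Your proof is correct and follows essentially the same route as the paper: the forward direction is exactly Corollary~\ref{cor:center_orthogonal}, and for the converse the paper also reduces to the block-diagonal structure, citing Lemma~\ref{lem:center_action} for the fact that $\pi_c(\Ad_g X)=\pi_c(X)$ (which you reprove directly via surjectivity of $\exp$). The only cosmetic difference is that the paper packages your four-term expansion into the single identity $\langle X_1,X_2\rangle=\langle \pi_c X_1,\pi_c X_2\rangle - B(X_1,X_2)$ (using that $B$ vanishes on $\mathfrak{z}$), and then applies $\Ad$-invariance of $B$ and of $\pi_c$ in one step.
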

\begin{proof}
    $(\implies)$. Corollary \ref{cor:center_orthogonal}.

    $(\impliedby)$. Since the Killing form $B$ is a direct sum of the Killing form on $\mathfrak{g}_0$ and the zero form on $\mathfrak{z}$, for $X_1, X_2 \in \mathfrak{g}$ we can write
    \[
        \langle X_1, X_2 \rangle = \langle \pi_c(X_1), \pi_c(X_2) \rangle - B(X_1, X_2)
    \]
    where $\pi_c$ is the projection map $\mathfrak{g} \to \mathfrak{z}$.
    Applying the $\Ad$ action for $g \in G$ and using Lemma \ref{lem:center_action} then implies
    \[
    \begin{split}
        \langle \Ad_g X_1, \Ad_g X_2 \rangle &= \langle \pi_c(\Ad_g(X_1)), \pi_c(\Ad_g(X_2)) \rangle - B(\Ad_g(X_1), \Ad_g(X_2)) \\
            &= \langle \pi_c(X_1), \pi_c(X_2) \rangle - B(X_1, X_2) \\
            &= \langle X_1, X_2 \rangle.
    \end{split}
    \]
    That is, $\langle \cdot, \cdot \rangle$ is $\Ad$-invariant.
\end{proof}

\subsection{Restrict to Cartan subalgebra} \label{sec:convexity}

In this section, we prove the main result needed to show that we can restrict to a Cartan subalgebra of $\mathfrak{g}$.
This is essentially a corollary of the Kostant convexity theorem, which is itself a generalization of the Schur-Horn theorem describing the diagonals of matrices with prescribed eigenvalues.
We first discuss Cartan subalgebras, which generalize the notion of diagonal matrices.

Let $G$ be a compact group, and let $\mathfrak{g}$ be its (reductive) Lie algebra.
A \emph{torus} is a compact connected Abelian Lie subgroup of $T \subset G$, and a \emph{maximal torus} is a torus which is not contained in any other torus.
For example, the set of all diagonal matrices in $GL_n(\C)$ or in $\mathrm{U}(n)$ is a maximal torus.
Similarly, a \emph{maximal Abelian subalgebra} is an Abelian subalgebra $\mathfrak{t} \subset \mathfrak{g}$ which is not contained in any other Abelian subalgebra.
As discussed in the preliminaries, we also refer to such subalgebras as \emph{Cartan subalgebras}.
The connection between tori and Abelian subalgebras is given in the following.

\begin{proposition}[\cite{knapp2013}, Section 1.10 and Proposition 4.30] \label{prop:torus}
    Let $G$ be a compact connected Lie group, and let $\mathfrak{g}$ be its Lie algebra.
    If $\mathfrak{t} \subset \mathfrak{g}$ is an Abelian subalgebra, then $T = \exp(\mathfrak{t}) \subset G$ is a torus.
    Further, if $\mathfrak{t}$ is maximal, then so is $T$.
\end{proposition}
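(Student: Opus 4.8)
The plan is to prove both statements by reducing to the matrix-group case via the exponential map and then invoking the standard structure theory. First I would fix an Abelian subalgebra $\mathfrak{t} \subset \mathfrak{g}$ and show that $T = \exp(\mathfrak{t})$ is a subgroup of $G$: since $\mathfrak{t}$ is Abelian, for $X, Y \in \mathfrak{t}$ we have $\ad_X(Y) = 0$, so the Baker–Campbell–Hausdorff formula collapses and $\exp(X)\exp(Y) = \exp(X+Y) \in T$; likewise $\exp(X)^{-1} = \exp(-X) \in T$. Connectedness of $T$ follows because $\mathfrak{t}$ is a connected set and $\exp$ is continuous, so $T = \exp(\mathfrak{t})$ is the continuous image of a connected set. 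Compactness of $T$ is the one point requiring a little care: one takes the closure $\overline{T}$, which is a closed subgroup of the compact group $G$, hence itself a compact (Lie) subgroup, and one checks that $\overline{T}$ is still Abelian (the commutator map is continuous and vanishes on the dense subset $T$) and connected (closure of a connected set). Thus $\overline{T}$ is a torus containing $\exp(\mathfrak{t})$; in fact for $\mathfrak{t}$ a subalgebra one has $\overline{T} = \exp(\mathfrak{t})$ by the standard fact that $\exp$ of a subalgebra of a compact Lie algebra already yields a closed subgroup, which is exactly the content of \cite{knapp2013}, Section 1.10. So the first assertion is essentially a citation once these continuity checks are in place.

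For the maximality statement, suppose $\mathfrak{t}$ is a maximal Abelian subalgebra but $T = \exp(\mathfrak{t})$ is contained in a strictly larger torus $T' \subsetneq T' $, i.e. $T \subsetneq T'$. Let $\mathfrak{t}' = \mathrm{Lie}(T')$ be the Lie algebra of $T'$. Since $T'$ is Abelian, $\mathfrak{t}'$ is an Abelian subalgebra of $\mathfrak{g}$, and since $T \subset T'$ we get $\mathfrak{t} \subseteq \mathfrak{t}'$. If the inclusion $T \subsetneq T'$ is strict, then (because both are connected Lie groups and $\exp$ restricted to each is surjective by Proposition \ref{prop:Lie_exp}, $T'$ being compact connected) we must have $\mathfrak{t} \subsetneq \mathfrak{t}'$: otherwise $\mathfrak{t} = \mathfrak{t}'$ would force $T = \exp(\mathfrak{t}) = \exp(\mathfrak{t}') = T'$. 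But $\mathfrak{t} \subsetneq \mathfrak{t}'$ with $\mathfrak{t}'$ Abelian contradicts maximality of $\mathfrak{t}$. Hence $T$ is a maximal torus. The converse direction (maximal torus gives maximal Abelian subalgebra) is symmetric and would follow the same dictionary, though it is not needed for the stated proposition.

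I expect the main obstacle to be the compactness of $T = \exp(\mathfrak{t})$ and, relatedly, the claim that $\exp(\mathfrak{t})$ is already closed rather than merely dense in a torus — this is where one genuinely uses that $G$ is compact (equivalently, that $\mathfrak{g}$ carries the negative-definite-on-the-semisimple-part Killing form of Proposition \ref{prop:Killing_orthogonal}), since for noncompact groups $\exp$ of an Abelian subalgebra can wind densely in a higher-dimensional torus. The cleanest route is to cite \cite{knapp2013} Section 1.10 for the statement that in a compact Lie group the exponential of any subalgebra is a closed connected subgroup, and then only supply the short BCH argument for the subgroup property and the Lie-algebra inclusion bookkeeping for maximality; everything else is formal. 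The remaining steps — connectedness, the passage between the strict inclusions $T \subsetneq T'$ and $\mathfrak{t} \subsetneq \mathfrak{t}'$, and the final contradiction with maximality of $\mathfrak{t}$ — are routine once surjectivity of $\exp$ on compact connected groups (Proposition \ref{prop:Lie_exp}) is invoked.
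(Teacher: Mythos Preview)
The paper does not prove this proposition at all; it is stated with a bare citation to Knapp and no argument is given. So there is no paper proof to compare against, and your proposal already goes further than the paper does.

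That said, there is a genuine gap in your argument, and it sits exactly where you flagged the main obstacle. The claim that $\exp(\mathfrak{t})$ is closed for an \emph{arbitrary} Abelian subalgebra $\mathfrak{t}$ of the Lie algebra of a compact group is false: take $G = \R^2/\Z^2$ with Lie algebra $\R^2$, and let $\mathfrak{t}$ be the line through the origin of irrational slope. Then $\exp(\mathfrak{t})$ is the dense irrational winding, which is connected and Abelian but not compact, hence not a torus in the paper's sense. Compactness of $G$ does not rule this out --- the irrational-winding phenomenon lives precisely inside compact tori --- and Knapp's Section~1.10 (analytic subgroups) does not say that analytic subgroups of compact groups are closed. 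So the first sentence of the proposition, read literally for a non-maximal $\mathfrak{t}$, is simply not true, and your proposed citation does not patch it.

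Your argument for the maximal case, on the other hand, is correct and is the standard one --- and it already contains the fix for closedness. If $\mathfrak{t}$ is maximal Abelian, pass to $\overline{T} = \overline{\exp(\mathfrak{t})}$, a compact connected Abelian subgroup whose (Abelian) Lie algebra $\mathfrak{t}'$ contains $\mathfrak{t}$; maximality forces $\mathfrak{t}' = \mathfrak{t}$, and surjectivity of $\exp$ on the compact connected group $\overline{T}$ (Proposition~\ref{prop:Lie_exp}) gives $\overline{T} = \exp(\mathfrak{t}') = \exp(\mathfrak{t}) = T$, so $T$ is itself a torus. Your maximality-of-$T$ contradiction then goes through verbatim. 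Since the paper only ever invokes this proposition in the maximal case (to conclude that $\exp(\mathfrak{z}\oplus\mathfrak{h})$ is a maximal torus), this is all that is actually needed.
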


We now state a classical result of Kostant regarding coadjoint orbits, which is the dual notion to adjoint orbits.
We do not further discuss this notion here, as we will only need it for this section.
Further, an $\Ad$-invariant inner product on $\mathfrak{g}$ (which we always have in the compact case) allows us to transfer any results on coadjoint orbits to analogous results on adjoint orbits.

\begin{theorem}[\textbf{Kostant convexity theorem \cite{Kostant1973}}; see also \cite{ziegler1992kostant}] \label{thm:kostant}
    Let $G$ be a compact connected Lie group, $T$ a maximal torus, $\mathfrak{g}$ and $\mathfrak{t}$ the associated Lie algebras, and $\pi: \mathfrak{g}^* \to \mathfrak{t}^*$ the natural projection.
    Then every coadjoint orbit $\mathcal{O}(X^*)$ of $G$ intersects $\mathfrak{t}^*$ in a Weyl group orbit $\Omega(X^*)$, and $\pi(\mathcal{O}(X^*)) = \hull(\Omega(X^*))$.
    Given a nondegenerate $\Ad$-invariant bilinear form on $\mathfrak{g}$, this result can be transferred to adjoint orbits in $\mathfrak{g}$.
\end{theorem}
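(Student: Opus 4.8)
The plan is to obtain Kostant's theorem as a consequence of the Atiyah--Guillemin--Sternberg convexity theorem for Hamiltonian torus actions, and then to transfer the statement from coadjoint to adjoint orbits using the nondegenerate $\Ad$-invariant form. The starting point is that a coadjoint orbit $\mathcal{O}(X^*) \subset \mathfrak{g}^*$ carries the Kirillov--Kostant--Souriau symplectic form, making it a compact connected symplectic manifold on which $G$, and hence the maximal torus $T$, acts by symplectomorphisms. The moment map for the full coadjoint $G$-action is the inclusion $\iota\colon \mathcal{O}(X^*) \hookrightarrow \mathfrak{g}^*$, so the moment map for the restricted $T$-action is $\pi \circ \iota$, where $\pi\colon \mathfrak{g}^* \to \mathfrak{t}^*$ is the natural restriction map.

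Next I would identify the fixed-point set of the $T$-action on $\mathcal{O}(X^*)$. A point $\xi$ is $T$-fixed precisely when its stabilizer contains $T$; using the root space decomposition of $\mathfrak{g}$ this happens exactly when $\xi$ annihilates $\ad_{\mathfrak{t}}(\mathfrak{g})$, i.e. when $\xi \in \mathfrak{t}^*$ under the standard identification. Two classical facts then pin this set down: every coadjoint orbit meets $\mathfrak{t}^*$ (conjugacy of maximal tori), and two elements of $\mathfrak{t}^*$ lie in the same $G$-orbit if and only if they lie in the same Weyl group orbit (the standard description of a closed Weyl chamber as a fundamental domain). Hence the $T$-fixed set is exactly $\mathcal{O}(X^*) \cap \mathfrak{t}^* = \Omega(X^*)$, a finite Weyl group orbit --- this is the first assertion of the theorem.

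Applying the Atiyah--Guillemin--Sternberg convexity theorem to the Hamiltonian $T$-action with moment map $\pi\circ\iota$ then yields $\pi(\mathcal{O}(X^*)) = \hull(\Omega(X^*))$, since the image of the moment map of a Hamiltonian torus action on a compact connected symplectic manifold is the convex hull of the images of the connected components of the fixed-point set. Finally, to transfer to adjoint orbits, note that $X \mapsto B(X,\cdot)$ is a $G$-equivariant linear isomorphism $\mathfrak{g}\cong\mathfrak{g}^*$ intertwining $\Ad$ with the coadjoint action and sending $\mathcal{O}(F)$ to $\mathcal{O}(B(F,\cdot))$; since $B$ is $\Ad$-invariant and nondegenerate, the $B$-orthogonal complement of $\mathfrak{t}$ corresponds to the annihilator of $\mathfrak{t}$, so $\pi\colon\mathfrak{g}^*\to\mathfrak{t}^*$ corresponds under this isomorphism to the $B$-orthogonal projection $\mathfrak{g}\to\mathfrak{t}$. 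Composing everything gives the adjoint-orbit version stated in the theorem.

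The genuinely nontrivial ingredient is the Atiyah--Guillemin--Sternberg convexity theorem itself, whose proof rests on a Morse-theoretic argument showing that level sets of the components of a moment map are connected; I would cite this rather than reprove it (Kostant's original argument, via the Iwasawa decomposition and combinatorics of the Weyl group, gives an alternative route that avoids symplectic geometry but is no shorter). The remaining work --- checking that the $T$-fixed points on the orbit are exactly $\mathcal{O}(X^*)\cap\mathfrak{t}^*$ and constitute a single Weyl orbit, and verifying the compatibility of $\pi$ with the $B$-orthogonal projection --- is routine structure theory.
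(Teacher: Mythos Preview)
The paper does not prove this theorem at all: it is stated with attribution to \cite{Kostant1973} and \cite{ziegler1992kostant} and then invoked as a black box in the proof of Proposition~\ref{prop:Kostant_optimization}. So there is no ``paper's own proof'' to compare against; the paper's approach is simply to cite the literature.

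Your outline via the Atiyah--Guillemin--Sternberg convexity theorem is a correct and by-now standard way to recover Kostant's result, and you have correctly identified the ingredients (KKS form on the orbit, moment map for the restricted $T$-action is $\pi\circ\iota$, $T$-fixed points coincide with the Weyl-group orbit in $\mathfrak{t}^*$, transfer to adjoint orbits via the nondegenerate invariant form). Two small remarks. First, as you yourself note, this route is logically anachronistic: AGS (1982) was proven after and partly in order to generalize Kostant (1973), so citing AGS to prove Kostant is circular in a historical sense though not in a mathematical one; Kostant's original argument goes through the Iwasawa decomposition and a direct convexity argument on the Weyl group orbit, and Ziegler's short proof \cite{ziegler1992kostant} is another self-contained route. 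Second, in identifying the $T$-fixed points you should be slightly careful about what ``$\xi\in\mathfrak{t}^*$'' means as a subspace of $\mathfrak{g}^*$: one uses the $T$-invariant splitting $\mathfrak{g}=\mathfrak{t}\oplus[\mathfrak{t},\mathfrak{g}]$ (root space decomposition) to embed $\mathfrak{t}^*$ as the annihilator of $[\mathfrak{t},\mathfrak{g}]$, after which the fixed-point computation goes through as you describe. None of this affects the correctness of your sketch.
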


\noindent
The Kostant convexity theorem then implies that we can restrict to a maximal Abelian (Cartan) subalgebra, as follows.

\begin{proposition}[\textbf{Restrict to Cartan subalgebra}] \label{prop:Kostant_optimization}
    Let $G$, $T$, $\mathfrak{g}$, $\mathfrak{t}$ be as in the Kostant convexity theorem with $\mathfrak{t} \subset \mathfrak{g}$, and suppose $\mathfrak{g}$ has an $\Ad$-invariant inner product $\langle \cdot, \cdot \rangle$.
    Then for any $X,Z \in \mathfrak{t}$, we have
    \[
        \min_{Y \in \mathcal{O}(X)} \langle Y, Z \rangle = \min_{Y \in \Omega(X)} \langle Y, Z \rangle,
    \]
    where $\mathcal{O}(X)$ is the adjoint orbit of $X$ and $\Omega(X)$ is the Weyl group orbit of $X$.
    That is, optimization of a linear functional can be restricted to $\Omega(X) = \mathcal{O}(X) \cap \mathfrak{t}$ in this case.
\end{proposition}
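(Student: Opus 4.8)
The plan is to use the $\Ad$-invariant inner product $\langle \cdot, \cdot \rangle$ to identify $\mathfrak{g}$ with $\mathfrak{g}^*$ and $\mathfrak{t}$ with $\mathfrak{t}^*$, so that the Kostant convexity theorem (Theorem \ref{thm:kostant}) transfers to adjoint orbits in $\mathfrak{g}$. Under this identification the natural projection $\pi: \mathfrak{g}^* \to \mathfrak{t}^*$ becomes the orthogonal projection $\pi: \mathfrak{g} \to \mathfrak{t}$ with respect to $\langle \cdot, \cdot \rangle$, and the theorem gives $\pi(\mathcal{O}(X)) = \hull(\Omega(X))$, where $\Omega(X) = \mathcal{O}(X) \cap \mathfrak{t}$ is the finite Weyl group orbit of $X$.

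First I would record the elementary observation that pairing against an element of $\mathfrak{t}$ factors through $\pi$: for any $Y \in \mathfrak{g}$ and any $Z \in \mathfrak{t}$ the vector $Y - \pi(Y)$ lies in the orthogonal complement $\mathfrak{t}^\perp$, hence pairs to zero with $Z$, so $\langle Y, Z \rangle = \langle \pi(Y), Z \rangle$. Combining this with the Kostant convexity theorem yields
\[
    \min_{Y \in \mathcal{O}(X)} \langle Y, Z \rangle = \min_{Y \in \mathcal{O}(X)} \langle \pi(Y), Z \rangle = \min_{W \in \pi(\mathcal{O}(X))} \langle W, Z \rangle = \min_{W \in \hull(\Omega(X))} \langle W, Z \rangle,
\]
where the minima exist because $\mathcal{O}(X)$ is compact, being the continuous image of the compact group $G$, and $\Omega(X)$ is finite.

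Next I would invoke the extreme-point principle for linear optimization over a polytope: since $\hull(\Omega(X))$ is the convex hull of the finite set $\Omega(X)$, the linear functional $W \mapsto \langle W, Z \rangle$ attains its minimum over $\hull(\Omega(X))$ at some point of $\Omega(X)$, so $\min_{W \in \hull(\Omega(X))} \langle W, Z \rangle = \min_{W \in \Omega(X)} \langle W, Z \rangle$. Finally, since $\Omega(X) = \mathcal{O}(X) \cap \mathfrak{t} \subseteq \mathfrak{t}$, we have $\pi(W) = W$ for every $W \in \Omega(X)$, so no value changes and the chain of equalities above gives $\min_{Y \in \mathcal{O}(X)} \langle Y, Z \rangle = \min_{Y \in \Omega(X)} \langle Y, Z \rangle$.

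The only real content is the transfer of Kostant's theorem from the coadjoint to the adjoint picture; the rest is the remark that pairing against $Z \in \mathfrak{t}$ sees only the $\mathfrak{t}$-component, plus the standard fact about linear functionals on polytopes. I expect the step requiring the most care to be verifying that, under the identification $\mathfrak{g} \cong \mathfrak{g}^*$, the coadjoint projection $\mathfrak{g}^* \to \mathfrak{t}^*$ corresponds to the orthogonal projection $\mathfrak{g} \to \mathfrak{t}$ — which holds because $\Ad$-invariance makes $\langle \cdot, \cdot \rangle|_{\mathfrak{t}}$ nondegenerate, so the splitting $\mathfrak{g} = \mathfrak{t} \oplus \mathfrak{t}^\perp$ is dual to $\mathfrak{g}^* = \mathfrak{t}^* \oplus (\mathfrak{t}^\perp)^*$ and the restriction map on duals is exactly orthogonal projection.
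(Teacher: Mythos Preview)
Your proposal is correct and follows essentially the same route as the paper's proof: transfer Kostant's convexity theorem to the adjoint picture via the $\Ad$-invariant inner product so that $\pi$ becomes the orthogonal projection onto $\mathfrak{t}$, use $\langle Y,Z\rangle=\langle\pi(Y),Z\rangle$ for $Z\in\mathfrak{t}$, and then invoke the extreme-point principle for linear functionals on polytopes to pass from $\hull(\Omega(X))$ to $\Omega(X)$. Your write-up is, if anything, slightly more careful about existence of the minima and about why the dual projection corresponds to orthogonal projection.
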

\begin{proof}
    Let $\pi^*: \mathfrak{g}^* \to \mathfrak{t}^*$ be the natural projection, which is defined to be dual to the inclusion $\iota: \mathfrak{t} \to \mathfrak{g}$ via
    \[
        [\pi^*(X^*)](Y) := X^*(\iota(Y)) = X^*(Y)
    \]
    for $X^* \in \mathfrak{g}^*$ and $Y \in \mathfrak{t}$.
    We now use the invariant inner product to get an isomorphism between $\mathfrak{g}$ and $\mathfrak{g}^*$, which we denote by $X \mapsto X^*$ and define
    \[
        X^*(Y) := \langle X, Y \rangle
    \]
    for all $Y \in \mathfrak{g}$.
    Let $\pi: \mathfrak{g} \to \mathfrak{t}$ now denote the orthogonal projection, which corresponds to $\pi^*$ via this isomorphism.
    This implies
    \[
        \langle \pi(X), Y \rangle = \langle X, \iota(Y) \rangle = \langle X, Y \rangle
    \]
    for all $X \in \mathfrak{g}$ and $Y \in \mathfrak{t}$.
    
    Since $\langle \cdot, \cdot \rangle$ is $\Ad$-invariant, we can apply the Kostant convexity theorem to the adjoint orbit $\mathcal{O}(X)$ to get $\pi(\mathcal{O}(X)) = \hull(\Omega(X))$.
    And since we are optimizing a linear functional, the maximum value occurs at an extreme point.
    Therefore,
    \[
        \min_{Y \in \Omega(X)} \langle Y, Z \rangle = \min_{Y \in \hull(\Omega(X))} \langle Y, Z \rangle = \min_{Y \in \pi(\mathcal{O}(X))} \langle Y, Z \rangle = \min_{Y \in \mathcal{O}(X)} \langle \pi(Y), Z \rangle = \min_{Y \in \mathcal{O}(X)} \langle Y, Z \rangle.
    \]
\end{proof}

\section{Strong Counting Oracle via the Harish-Chandra Formula} \label{sec:HC}

Here we discuss some general principles of the formulas which give rise to strong counting/integration oracles for various families of groups.
We leave discussion of explicit examples for specific groups to Section \ref{sec:applications}.
In general, all formulas we achieve come by way of the Harish-Chandra integral formula, which we state now.
(Note that the following result contains some notation that we have not defined.
We will define and discuss what is important to our applications, and we refer the reader to \cite{mcswiggen2018harish} for any undefined notation.)

\begin{theorem}[\textbf{Harish-Chandra integral formula \cite{HarishChandra1957}}] \label{thm:HC}
    Let $G$ be a compact connected semisimple Lie group, $\mathfrak{g}$ the associated Lie algebra, and $\mathfrak{h} \subset \mathfrak{g}$ a Cartan subalgebra.
    For any $h_1,h_2 \in \mathfrak{h}$ we have
    \[
        \int_G e^{B(h_1, \Ad_g h_2)} dg = C_G \cdot \frac{\sum_{w \in W} \epsilon(w) e^{B(h_1, w(h_2))}}{\Pi(h_1) \Pi(h_2)},
    \]
    where $B$ is a Killing form of $\mathfrak{g}$, $dg$ is the Haar measure on $G$, $W$ is the Weyl group of $G$, $\Pi$ is an efficiently computable function on $\mathfrak{h}$, and $C_G$ is an efficiently computable constant.
\end{theorem}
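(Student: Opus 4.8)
The plan is to reduce the left-hand side to an integral over the (co)adjoint orbit $\mathcal{O}(h_2)$ and apply abelian localization. First I would reduce to the generic case: both sides are real-analytic in $(h_1,h_2)$, the left-hand side obviously, and the right-hand side because the numerator $N(h_1,h_2):=\sum_{w\in W}\epsilon(w)e^{B(h_1,w(h_2))}$ is alternating under $W$ in each variable separately (substitute $h_j\mapsto\sigma h_j$ and reindex the sum), hence as an entire function it is divisible by the product of the linear forms cutting out the reflection hyperplanes, i.e. by $\Pi(h_1)\Pi(h_2)$, with $W$-invariant quotient. So it suffices to prove the identity for $h_1,h_2$ in the dense open set of regular elements. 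Next I would push the Haar measure forward along $p_{h_2}\colon G\to\mathcal{O}(h_2)$, $g\mapsto\Ad_g h_2$; the image is a $G$-invariant measure on $\mathcal{O}(h_2)$, hence a scalar multiple $\tfrac{1}{V(h_2)}\,\tfrac{\omega^{n}}{n!}$ of the Liouville measure of the Kirillov--Kostant--Souriau form $\omega$ (identifying $\mathfrak{g}\cong\mathfrak{g}^*$ via $B$, with $2n=\dim\mathcal{O}(h_2)$), and by the classical volume formula for coadjoint orbits $V(h_2)$ is a fixed constant times $\Pi(h_2)$. This already produces the $\Pi(h_2)$ in the denominator, reducing the claim to
\[
\int_{\mathcal{O}(h_2)} e^{\phi_{h_1}}\,\frac{\omega^{n}}{n!}\;=\;(\text{const})\cdot\frac{\sum_{w\in W}\epsilon(w)e^{B(h_1,w(h_2))}}{\Pi(h_1)},
\]
where $\phi_{h_1}(X)=B(h_1,X)$ is exactly the $h_1$-component of the moment map for the Hamiltonian action of the torus $T$ = closure of $\exp(\mathbb{R}h_1)$.

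For this I would invoke the Duistermaat--Heckman exact stationary-phase theorem (equivalently Berline--Vergne / Atiyah--Bott localization): the integral localizes to the $T$-fixed locus of $\mathcal{O}(h_2)$. For $h_1$ regular a point $X$ is fixed iff $[h_1,X]=0$ iff $X\in\mathfrak{h}$, so by the Kostant convexity theorem (Theorem \ref{thm:kostant}) the fixed points are precisely $\{w(h_2):w\in W\}$. Localization gives $\int e^{\phi_{h_1}}\omega^n/n!=\sum_{w}e^{B(h_1,w(h_2))}/e_{h_1}(w(h_2))$, and the remaining work is to identify the equivariant Euler class $e_{h_1}(w(h_2))$, i.e. the product over the weights of the linearized $T$-action on $T_{w(h_2)}\mathcal{O}(h_2)\cong\bigoplus_{\alpha}\mathfrak{g}_\alpha$. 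The $T$-weight on $\mathfrak{g}_\alpha$ is $\langle\alpha,h_1\rangle$, but the symplectic orientation of the real $2$-plane attached to $\mathfrak{g}_{\pm\alpha}$ is governed by the sign of $\langle\alpha,w(h_2)\rangle=\langle w^{-1}\alpha,h_2\rangle$; choosing, at the fixed point $w(h_2)$, the $\alpha$ with $w^{-1}\alpha$ positive, one gets $\prod_{\beta>0}\langle w\beta,h_1\rangle=\Pi(w^{-1}h_1)=\epsilon(w)\,\Pi(h_1)$. Hence $e_{h_1}(w(h_2))=(\text{const})\cdot\epsilon(w)\,\Pi(h_1)$ (the constant a fixed power of $2\pi i$ times $n!$, independent of $w$), and summing yields the displayed identity. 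Folding all numerical prefactors — powers of $2\pi$, $n!$, and the orbit-volume constant $\Pi(\rho)^{-1}$-type factor — into one scalar $C_G$ finishes the regular case, and the singular case follows by the analytic-continuation remark above. Note $C_G$ is a product of factorials and inner products of roots, hence efficiently computable, and $\Pi(h)=\prod_{\alpha>0}\langle\alpha,h\rangle$ is a polynomial evaluable from the root-system data.

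The main obstacle is not the calculus but the bookkeeping at the fixed points: getting the orientations right is exactly what produces both the sign $\epsilon(w)$ and the precise shape of $\Pi$, and a sign error there would misplace the Weyl denominators; pinning down $C_G$ and the $\Pi(h_2)$ factor likewise needs the classical symplectic-volume formula for coadjoint orbits. An alternative, more classical route — showing both sides are eigenfunctions of the full algebra of invariant constant-coefficient differential operators restricted to $\mathfrak{h}$, with matching eigenvalues and leading exponential asymptotics, in Harish-Chandra's original style — would also work, but it requires developing the radial part of the Laplacian and seems heavier than localization for our purposes. For the applications in Section \ref{sec:applications}, only the \emph{form} of the answer (a signed sum over $W$, collapsible to a small determinant for the classical groups) is needed, so either proof suffices.
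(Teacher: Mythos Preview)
The paper does not prove this theorem at all: it is stated as a classical result, with citations to \cite{HarishChandra1957} and \cite{mcswiggen2018harish}, and is used as a black box. So there is no ``paper's own proof'' to compare your proposal against.

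That said, your sketch is the standard modern proof via Duistermaat--Heckman exact stationary phase (equivalently, Berline--Vergne/Atiyah--Bott equivariant localization on the coadjoint orbit), and the outline is correct: reduce to regular $h_1,h_2$ by analytic continuation using $W$-skewness of the numerator, push the Haar measure forward to the Liouville measure on $\mathcal{O}(h_2)$ (which accounts for the $\Pi(h_2)$ factor via the symplectic volume formula), localize at the finite fixed-point set $W\!\cdot h_2$, and identify the equivariant Euler class at $w(h_2)$ as $\epsilon(w)\Pi(h_1)$ up to a universal constant. Your own caveat is the right one---the sign/orientation bookkeeping at the fixed points is where errors creep in---but the structure is sound. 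The alternative you mention (Harish-Chandra's original argument via radial parts of invariant differential operators and uniqueness of joint eigenfunctions) is indeed the proof in \cite{HarishChandra1957}; the localization route is later and more geometric. For the purposes of this paper, either proof is overkill: only the \emph{form} of the right-hand side and its determinantal collapse for classical groups (supplied by \cite{mcswiggen2018harish}) are actually used downstream.
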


\noindent
Note that this extends upon the well-known HCIZ integral formula (see Theorem \ref{thm:HCIZ} below) because it allows for any compact connected semisimple Lie group, and not just the unitary group.
On the other hand it is not immediately clear how useful this formula is to us, as the above sum is over the (at least) exponentially-sized Weyl group $W$.
In Section \ref{sec:applications}, we discuss how to convert the large sum into small determinants for specific groups.
Before doing this, we use the rest of this section to discuss various issues related to turning the above Harish-Chandra integral formula into a strong counting oracle for adjoint orbits.

\subsection{The Weyl group}

Associated to every Lie group $G$ is a particular finite group $W$ called its \emph{Weyl group}.
There are many equivalent ways to define the Weyl group of a given Lie group, but we describe none of them here.
(The interested reader can consult any standard Lie theory reference.)
In this paper, the most important property of the Weyl group is its appearance in the Harish-Chandra integral formula above.

Conceptually, $W$ is a ``quadrature group'' for $G$, which is one way to interpret the Harish-Chandra formula.
That is, integrals of certain exponential functions over the group $G$ (a manifold) can be computed by instead summing those functions over the Weyl group $W$ (a discrete set).
Unfortunately the Weyl group $W$ is often related to the symmetric group, and so it is a priori too large for the Harish-Chandra formula to be efficiently computable.
For many matrix groups however, the formula can be rewritten in terms of a small sum of determinants, leading to something which can be effectively computed.
We discuss this further for specific examples in Section \ref{sec:applications}.

\subsection{Integrals on orbits}

A priori, the above integral is on the group $G$ and not on the orbit itself.
We now show how the Harish-Chandra formula actually gives a formula for computing the exponential integral we care about, given by
\[
    \mathcal{E}_F(Y) = \log \int_{\mathcal{O}(F)} e^{-\langle Y, X \rangle} d\mu_F(X)
\]
for $F \in \mathfrak{z} \oplus \mathfrak{h}$, where $\langle \cdot, \cdot \rangle$ is $\Ad$-invariant.
First, we can restrict our search space to $Y \in \mathfrak{h}$ by Theorem \ref{thm:lie_algo}.
By Corollary \ref{cor:center_orthogonal}, we may then write
\[
    \mathcal{E}_F(Y) = \mathcal{E}_{F'}(Y) = \log \int_{\mathcal{O}(F')} e^{-\langle Y, X \rangle} d\mu_{F'}(X),
\]
where $F' \in \mathfrak{h}$ is the projection $\mathfrak{z} \oplus \mathfrak{h} \to \mathfrak{h}$ applied to $F$.
We now use a standard fact about integrals on groups to transfer the integral of $\mathcal{E}_F(Y)$ to the group $G$.

\begin{theorem}[\cite{helgason1984}, Theorem 1.9] \label{thm:helgason}
    Let $H$ be a closed subgroup of a compact Lie group $G$. Then there exists a unique $G$-invariant measure on $G/H$ such that for any continuous $f:G \to \R$ we have:
    \[
        \int_G f(g) dg = \int_{G/H} \left(\int_H f(gh) dh\right) dg_H.
    \]
    Here $dg$ and $dh$ are the respective Haar probability measures, and $dg_H$ is the $G$-invariant measure on $G/H$.
\end{theorem}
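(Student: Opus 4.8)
The plan is to construct the measure $dg_H$ explicitly as a pushforward of the Haar measure and then verify the formula by Fubini, exploiting the fact that Haar measure on a compact group is bi-invariant. Let $\pi : G \to G/H$ denote the canonical projection. Since $G$ is compact, $G/H$ is compact Hausdorff, and I would define $dg_H := \pi_*(dg)$, i.e. $\int_{G/H} \phi \, dg_H := \int_G \phi(\pi(g))\, dg$ for continuous $\phi$ on $G/H$. This is a Borel probability measure, and it is $G$-invariant because $\pi$ intertwines the left-translation action of $G$ on itself with the $G$-action on $G/H$ while $dg$ is left-invariant.

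For uniqueness, let $\nu$ be any $G$-invariant Borel probability measure on $G/H$ and let $\phi$ be continuous on $G/H$. By $G$-invariance, $\int_{G/H}\phi(g\cdot x)\,d\nu(x) = \int_{G/H}\phi\,d\nu$ for every $g \in G$; integrating this identity against $dg$ and applying Fubini gives $\int_{G/H}\phi\,d\nu = \int_{G/H}\big(\int_G \phi(g\cdot x)\,dg\big)\,d\nu(x)$. For each fixed $x = g_xH$, left-invariance of $dg$ shows $\int_G \phi(g\cdot x)\,dg = \int_G \phi(\pi(gg_x))\,dg = \int_G \phi(\pi(g))\,dg$, a constant independent of $x$; hence $\int_{G/H}\phi\,d\nu = \int_G \phi\circ\pi\,dg = \int_{G/H}\phi\,dg_H$, so $\nu = dg_H$.

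It remains to establish the integration formula. First I would check that for continuous $f : G \to \R$ the function $F(g) := \int_H f(gh)\,dh$ depends only on the coset $gH$: if $g' = gh_0$ with $h_0 \in H$, then $\int_H f(g'h)\,dh = \int_H f(gh_0h)\,dh = \int_H f(gh)\,dh$ by right-invariance of the Haar measure on $H$. So $F$ descends to a function $\bar F$ on $G/H$, which is continuous by uniform continuity of $f$ on the compact group $G$ together with continuity of $\pi$. Then, unwinding the definition of $dg_H$ and applying Fubini (valid since $f$ is bounded and continuous and both Haar measures are finite),
\[
  \int_{G/H} \bar F\, dg_H \;=\; \int_G F(g)\,dg \;=\; \int_G \int_H f(gh)\,dh\,dg \;=\; \int_H \Big(\int_G f(gh)\,dg\Big) dh .
\]
Since $G$ is compact its Haar measure is bi-invariant, so $\int_G f(gh)\,dg = \int_G f(g)\,dg$ for each fixed $h \in H \subset G$; as $dh$ is a probability measure this yields $\int_{G/H}\bar F\,dg_H = \int_G f\,dg$, the claimed identity.

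The only genuinely delicate points are the regularity needed to justify Fubini and to treat $\bar F$ as a continuous function (both free here from continuity and uniform continuity of $f$ on the compact $G$), and the appeal to bi-invariance of the Haar measure — this is precisely the step that fails for general locally compact groups, where one must instead assume the modular functions of $G$ and $H$ agree on $H$. In the compact setting both are automatic, so the argument is short; I expect no serious obstacle beyond this bookkeeping.
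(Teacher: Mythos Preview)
The paper does not supply its own proof of this theorem: it is quoted as a known result from Helgason's book and used as a black box to transfer the orbit integral to an integral over $G$. So there is nothing in the paper to compare your argument against.

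Your argument is correct and is essentially the standard one. One small slip: when you check that $F(g) = \int_H f(gh)\,dh$ depends only on the coset $gH$, the substitution $h \mapsto h_0 h$ uses \emph{left}-invariance of $dh$, not right-invariance as you wrote. This is harmless here since $H$, being a closed subgroup of a compact group, is itself compact and its Haar measure is bi-invariant, but it is worth stating correctly. Your closing remark about the modular-function condition in the non-compact case is the right diagnosis of where compactness is genuinely used.
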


\noindent
Letting $\Phi: G / \Stab(F') \to \mathcal{O}(F')$ be the standard isomorphism given by $\Phi(g \cdot \Stab(F')) = \Ad_{g} F'$ for all $g \in G$, we can consider $\mu_{F'}$ to be supported on $G / \Stab(F')$.
By uniqueness, $\mu_{F'}$ is the measure on $G / \Stab(F')$ claimed by the theorem (up to scalar).
With this we compute
\[
\begin{split}
    \mathcal{E}_{F'}(Y) &= \log \int_{G / \Stab(F')} e^{-\langle Y, \Ad_g F' \rangle} d\mu_{F'}(g \cdot \Stab(F')) \\ 
        &= \log \int_{G / \Stab(F')} \left(\int_{\Stab(F')} e^{-\langle Y, \Ad_{gh} F' \rangle} dh\right) d\mu_{F'}(g \cdot \Stab(F')) \\
        &= \log \int_G e^{-\langle Y, \Ad_g F' \rangle} dg.
\end{split}
\]
That is, we have
\[
    \mathcal{E}_F(Y) = \log \int_{\mathcal{O}(F)} e^{-\langle Y, X \rangle} d\mu_F(X) = \log \int_G e^{-\langle Y, \Ad_g F' \rangle} dg.
\]
With this, we can apply the Harish-Chandra formula to compute $\mathcal{E}_F(Y)$.

\subsection{Other computability issues}

Now, the Harish-Chandra formula gives \emph{some} algorithm for computing $\mathcal{E}_F(Y)$, but the problem that still remains is that the size of the Weyl group $W$ is usually larger than exponential in the dimension of $\mathfrak{g}$.
Fortunately in most of the interesting cases, the exponentially sized sum in the formula becomes a small sum of ratios of determinants (see Section \ref{sec:applications} for specific examples).
An example to keep in mind is the HCIZ integral formula (Theorem \ref{thm:HCIZ}):
\[
    \int_{\mathrm{U}(n)} e^{\langle A, UBU^* \rangle} dU = \left(\prod_{p=1}^{n-1} p!\right) \frac{\det(e^{a_ib_j})_{i,j=1}^n}{\prod_{i<j} (a_j-a_i)(b_j-b_i)}.
\]
Multiplicity issues (i.e., when two values of $a_i$ or $b_j$ are equal) are then the last things to deal with to obtain an efficiently computable formula for such integrals.
This is handled in a standard way, as in \cite{leake2020computability}.
First, in all the cases we consider, the $\Pi(h_i)$ factors in the Harish-Chandra formula can be written as certain Vandermonde-like determinants.
Then, since we know that the integral itself is a smooth function of its inputs, we can apply L'Hoptial's rule to the ratio of determinants.
In all the cases we consider, the matrices involved are such that only one row or column depends on each variable involved.
Multilinearity of the determinant then means we can pass these derivatives directly to appropriate row or column, and so the resulting expression is still a sum of a small number of ratios of determinants.

To obtain the strong counting oracle required by \ref{thm:lie_algo}, we also need to be able to compute the gradient of $\mathcal{E}_F(Y)$.
Generally speaking, the same observation utilized to handle multiplicity can be used to compute the partial derivatives.
That is, multilinearity of the determinant means we can pass the partial derivative directly onto the appropriate row or column of the matrices involved.
What remains is to handle multiplicity in the expressions for the gradient.
What is different in this case is that these expressions may now contain ratios of \emph{products} of determinants:
\[
    \partial_x \frac{\det(A)}{\det(B)} = \frac{\det(\partial_x A)\det(B) - \det(A) \cdot \det(\partial_x B)}{\det(B)^2}.
\]
When applying L'Hopital's rule to this expression, it is a priori possible that the number of terms in the numerator will blow up exponentially as derivatives are applied.
We now argue why this can't actually happen, by considering a toy example where we are limiting $x \to y$.
Let us first write
\[
    \det(A) = O((x-y)^j) \qquad \text{and} \qquad \det(B) = O((x-y)^k),
\]
where we know by continuity of $\frac{\det(A)}{\det(B)}$ that $j \geq k$. We then have
\[
\begin{split}
    \lim_{x \to y} \partial_x \frac{\det(A)}{\det(B)} &= \lim_{x \to y} \frac{\det(\partial_x A)\det(B) - \det(A) \cdot \det(\partial_x B)}{\det(B)^2} \\
        &= \lim_{x \to y} \frac{\partial_x^{2k}[\det(\partial_x A)\det(B) - \det(A) \cdot \det(\partial_x B)]}{\partial_x^{2k}[\det(B)^2]} \\
        &= \lim_{x \to y} \frac{\sum_{j=0}^{2k} \binom{2k}{j} \det(\partial_x^{j+1} A) \det(\partial_x^{2k-j} B) - \sum_{j=0}^{2k} \binom{2k}{j} \det(\partial_x^j A) \det(\partial_x^{2k-j+1} B)}{\det(\partial_x^k B)^2} \\
        &= \lim_{x \to y} \frac{[\binom{2k}{k} - \binom{2k}{k-1}] \cdot [\det(\partial_x^{k+1} A) \det(\partial_x^k B) - \det(\partial_x^k A) \det(\partial_x^{k+1} B)]}{\det(\partial_x^k B)^2} \\
        &= \left.\binom{2k}{k} \cdot \frac{\det(\partial_x^{k+1} A) \det(\partial_x^k B) - \det(\partial_x^k A) \det(\partial_x^{k+1} B)}{(k+1) \cdot \det(\partial_x^k B)^2}\right|_{x=y}.
\end{split}
\]
The reason the above limit expression is correct is because we must apply at least $k$ derivatives to each of $\det(A)$ and $\det(B)$ in order to get a non-zero term in the numerator.
Specifically, this shows that we still have a sum of a small number of ratios of determinants.
Applying this type of argument to handle more general multiplicity then gives an efficient formula for computing the gradient of $\mathcal{E}_F(Y)$.

\section{Applications} \label{sec:applications}

In this section, we discuss examples of compact connected Lie groups to which we can apply our framework for computing maximum entropy distributions on orbits.
The particular groups we discuss are the main classical building blocks for the theory of compact Lie groups, and so they will cover a large portion of all possibilities.
For each such group, we will discuss a few important features of the Lie group $G$ and its corresponding Lie algebra $\mathfrak{g}$ which allow us to give efficient algorithms for maximum entropy optimization.
In particular, we will discuss
\begin{enumerate}
    \item An $\Ad$-invariant inner product $\langle \cdot, \cdot \rangle$ on $\mathfrak{g}$,
    \item A maximal torus $T \subset G$ and the corresponding Cartan subalgebra $\mathfrak{h} \subset \mathfrak{g}$,
    \item A basis of $\mathfrak{g}$ which is orthonormal with respect to $\langle \cdot, \cdot \rangle$ and restricts to an orthonormal basis on $\mathfrak{h}$,
    \item A counting oracle for the exponential integral $\mathcal{E}_F(Y)$ on an adjoint orbit $\mathcal{O}(F) \subset \mathfrak{g}$.
\end{enumerate}

\noindent
The bulk of the work of this section is towards demonstrating that the Harish-Chanrda formula (Theorem \ref{thm:HC}) gives a counting oracle for the exponential integral.
This relies heavily on the recent work of \cite{mcswiggen2018harish} which gives explicit determinantal formulas for a large class of compact Lie groups.

\subsection{Compact matrix groups in general}

All groups that we will consider are compact matrix groups.
Because of this, the $\Ad$-invariant inner products we consider will always be restrictions of $\langle X, Y \rangle = -\Tr(XY)$ to subalgebras of $\mathfrak{u}(n)$.
Note that the negative sign is needed here because we the matrices in $\mathfrak{u}(n)$ are skew-Hermitian.
To see further that this is $\Ad$-invariant, fix any $g \in \mathrm{U}(n)$ and $X,Y \in \mathfrak{u}(n)$ and write
\[
    \langle \Ad_g X, \Ad_g Y \rangle = -\Tr(gXg^{-1} gYg^{-1}) = -\Tr(XY) = \langle X, Y \rangle.
\]
When restricted to the semisimple part of $\mathfrak{u}(n)$, this inner product is also equal to the negative of the Killing form (up to scalar).
This fact will enable us to apply the Harish-Chandra formula (Theorem \ref{thm:HC}) for the specific groups we consider here.

\subsection{Unitary groups}

The unitary groups $\mathrm{U}(n)$ and $\mathrm{SU}(n)$ are defined as
\[
    \mathrm{U}(n) := \{U \in GL_n(\C) ~:~ U^*U = I\} \quad \text{and} \quad \mathrm{SU}(n) := \{U \in SL_n(\C) ~:~ U^*U = I\}.
\]
These are real compact connected Lie groups of dimension $n^2$ and $n^2-1$, respectively.
The associated Lie algebras are given as
\[
    \mathfrak{u}(n) := \{X \in \mathfrak{gl}_n(\C) ~:~ X \text{ is skew-Hermitian}\} \quad \text{and} \quad \mathfrak{su}(n) := \{X \in \mathfrak{sl}_n(\C) ~:~ X \text{ is skew-Hermitian}\}.
\]
The Lie algebra $\mathfrak{su}(n)$ has trivial center, while the Lie algebra $\mathfrak{u}(n)$ has center consisting of all valid multiples of the identity matrix.
Further, the Lie algebra of $\mathfrak{su}(n)$ is actually simple, which means that the Killing form on $\mathfrak{su}(n)$ is the unique invariant symmetric bilinear form up to scalar.
So it must be a multiple of the Frobenius inner product.
The fact that $\mathrm{SU}(n)$ is compact implies $B$ is negative semidefinite, but this can also be seen from the fact that $\mathfrak{su}(n)$ consists solely of skew-Hermitian matrices.

We choose the standard maximal torus $T$ given by the diagonal matrices of $\mathrm{U}(n)$ or $\mathrm{SU}(n)$, which corresponds to the Cartan subalgebra $\mathfrak{h}$ consisting of diagonal matrices of $\mathfrak{u}(n)$ or $\mathfrak{su}(n)$.
An orthonormal basis for $\mathfrak{h}$ and $\mathfrak{g}$ is then given by
\[
    H_j := i E_{j,j} \quad \text{for } j \in [n], \qquad X_{j,k} := \frac{i (E_{j,k} + E_{k,j})}{\sqrt{2}} \quad \text{for } j < k, \qquad Y_{j,k} := \frac{E_{j,k} - E_{k,j}}{\sqrt{2}} \quad \text{for } j < k.
\]
The following well-known HCIZ formula, which is a specific case of the Harish-Chandra formula (Theorem \ref{thm:HC}), then gives rise to a counting oracle for orbits of these groups.

\begin{theorem}[\textbf{Harish-Chandra-Itzykson-Zuber integral formula \cite{HarishChandra1957,IZ1980}}; see also \cite{mcswiggen2018harish}] \label{thm:HCIZ}
    Given diagonal matrices $A = \sum_j a_j H_j$ and $B = \sum_j b_j H_j$ in $\mathfrak{h}$, we have
    \[
        \int_{\mathrm{U}(n)} e^{\langle A, UBU^* \rangle} dU = \int_{\mathrm{SU}(n)} e^{\langle A, UBU^* \rangle} dU = \left(\prod_{p=1}^{n-1} p!\right) \frac{\det(e^{a_ib_j})_{i,j=1}^n}{\prod_{i<j} (a_j-a_i)(b_j-b_i)}.
    \]
\end{theorem}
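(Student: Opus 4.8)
The plan is to obtain the HCIZ formula as the $G = \mathrm{SU}(n)$ (equivalently $\mathrm{U}(n)$) instance of the general Harish-Chandra integral formula (Theorem \ref{thm:HC}), and then to pin down the overall constant by a limiting argument. First I would reconcile the two integrals: writing $U = \zeta V$ with $\zeta$ a scalar in $\mathrm{U}(1)$ and $V \in \mathrm{SU}(n)$, the scalar commutes with everything, so $UBU^* = VBV^*$ and the integrand depends only on the $\mathrm{SU}(n)$-component; since both Haar measures are probability measures, the two integrals agree, and it suffices to treat $\mathrm{SU}(n)$. Next I would record the normalization: with $H_j = iE_{j,j}$ and $\langle X, Y\rangle = -\Tr(XY)$ one computes $\langle A, UBU^*\rangle = \Tr(\diag(a)\, U\diag(b)\, U^*)$, which is exactly the pairing in Theorem \ref{thm:HC} once this inner product is identified with a scalar multiple of the Killing form on the simple Lie algebra $\mathfrak{su}(n)$, as discussed in Section \ref{sec:applications}.

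With this setup, Theorem \ref{thm:HC} gives
\[
    \int_{\mathrm{SU}(n)} e^{\langle A, UBU^*\rangle}\, dU = C_{\mathrm{SU}(n)} \cdot \frac{\sum_{w \in W} \sgn(w)\, e^{\langle A, w(B)\rangle}}{\Pi(A)\,\Pi(B)},
\]
and the three remaining tasks are: (i) identify $W$ and its action, (ii) identify $\Pi$, and (iii) compute the constant. For (i), the Weyl group of $\mathrm{SU}(n)$ is $S_n$ acting by permuting diagonal entries and $\sgn$ is the usual sign, so $\langle A, w(B)\rangle = \sum_i a_i b_{w(i)}$ and the Leibniz expansion of the determinant gives $\sum_{w \in S_n} \sgn(w)\, e^{\sum_i a_i b_{w(i)}} = \det\big(e^{a_i b_j}\big)_{i,j=1}^n$. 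For (ii), the positive roots of $\mathfrak{su}(n)$ are $\{e_i - e_j : i < j\}$, so $\Pi$ is a constant multiple of the Vandermonde $\prod_{i<j}(a_j - a_i)$, and likewise in $B$; absorbing these constants into $C_{\mathrm{SU}(n)}$ reduces the identity to the claimed one up to the value of a single overall constant.

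To determine that constant I would evaluate both sides in the limit $A \to 0$ with $B$ generic. The left-hand side tends to $\int_{\mathrm{SU}(n)} 1\, dU = 1$. On the right-hand side, $\det(e^{a_i b_j})$ vanishes whenever two $a_i$ coincide, so it is divisible by $\prod_{i<j}(a_j - a_i)$; Taylor-expanding $e^{a_i b_j}$, applying Cauchy–Binet, and extracting the lowest-degree term in $A$ (the minimal-degree subset of exponents being $\{0,1,\ldots,n-1\}$) gives
\[
    \lim_{A \to 0} \frac{\det\big(e^{a_i b_j}\big)}{\prod_{i<j}(a_j - a_i)} = \left(\prod_{p=1}^{n-1} p!\right)^{-1} \prod_{i<j}(b_j - b_i),
\]
so the right-hand side tends to $C_{\mathrm{SU}(n)} \left(\prod_{p=1}^{n-1} p!\right)^{-1}$, forcing $C_{\mathrm{SU}(n)} = \prod_{p=1}^{n-1} p!$.

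The main obstacle here is bookkeeping rather than conceptual: one must track the scalar relating $\langle \cdot, \cdot \rangle$ to the Killing form, the constant relating $\Pi$ to the plain Vandermonde, and the constant $C_G$ of Theorem \ref{thm:HC}, and check that their product is exactly $\prod_{p=1}^{n-1} p!$ with no spurious factor; the $A \to 0$ evaluation above is the cleanest route since it fixes the product of all these constants in one stroke. A secondary point needing care is that Theorem \ref{thm:HC} presupposes $A, B$ regular (so $\Pi(A)\Pi(B) \neq 0$); the general case and the confluent limit used for the constant both follow from continuity of the two sides, exactly the multilinearity/L'Hôpital argument already described in Section \ref{sec:HC}. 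One could instead give a self-contained proof via the Itzykson–Zuber heat-equation identity or exact stationary phase (Duistermaat–Heckman), but reusing Theorem \ref{thm:HC} is the shortest path given what has already been established.
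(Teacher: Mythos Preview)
Your derivation is a perfectly standard and correct route to the HCIZ formula: specialize the general Harish-Chandra formula to $\mathrm{SU}(n)$, recognize the Weyl group as $S_n$ so that the alternating sum becomes $\det(e^{a_ib_j})$, recognize $\Pi$ as a Vandermonde, and fix the overall constant by sending $A\to 0$. The bookkeeping caveats you flag (scaling between $\langle\cdot,\cdot\rangle$ and the Killing form, regularity of $A,B$) are the right ones, and the $A\to 0$ trick does resolve them cleanly.

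However, there is nothing in the paper to compare this against. The paper does not prove Theorem~\ref{thm:HCIZ}; it simply states it as a classical result with citations to Harish-Chandra, Itzykson--Zuber, and McSwiggen, in the same way the orthogonal and symplectic formulas later in Section~\ref{sec:applications} are quoted directly from \cite{mcswiggen2018harish} without derivation. So your proposal is not so much an alternative to the paper's proof as a proof where the paper has none. If you want to align with the paper's treatment, you would simply cite the result; if you want to include a proof, what you have written is an appropriate sketch, though you should be aware that it goes beyond what the paper itself undertakes.
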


\subsection{Orthogonal groups}

The orthogonal groups $\mathrm{O}(N)$ and $\mathrm{SO}(N)$ are defined as
\[
    \mathrm{O}(N) := \{O \in GL_N(\R) ~:~ O^\top O = I\} \quad \text{and} \quad \mathrm{SO}(N) := \{O \in SL_N(\R) ~:~ O^\top O = I\}.
\]
These are real compact Lie groups of the same dimension $N(N-1)/2$.
Further, $\mathrm{SO}(N)$ is the identity component of $\mathrm{O}(N)$, and so $\mathrm{SO}(N)$ is connected while $\mathrm{O}(N)$ is not.
This in particular means that both groups have the same Lie algebra, given as
\[
    \mathfrak{o}(N) = \mathfrak{so}(N) = \{ X \in \mathfrak{sl}_N(\R) ~:~ X \text{ is skew-symmetric}\}.
\]
This Lie algebra has trivial center, and is in fact simple, and so the Killing form is the unique invariant symmetric bilinear form up to scalar.
So it must be a multiple of the Frobenius inner product.
The fact that $\mathrm{SO}(N)$ is compact implies $B$ is negative semidefinite, but this can also be seen from the fact that $\mathfrak{so}(N)$ consists solely of real skew-symmetric matrices.

Beyond these properties, the orthogonal groups break into two cases corresponding to even and odd values of $N$.
This distinction is standard in Lie theory, and we will see it explicitly in differences in the counting oracles for the two cases.

\subsubsection{Even case}

In this section, we consider the Lie group $\mathrm{SO}(2n)$.
We choose the standard maximal torus $T$ given by the block-diagonal matrices (with $2 \times 2$ blocks) of $\mathrm{SO}(2n)$, which corresponds to the Cartan subalgebra $\mathfrak{h}$ consisting of block-diagonal matrices (with $2 \times 2$ blocks) of $\mathfrak{so}(2n)$.
An orthonormal basis for $\mathfrak{h}$ and $\mathfrak{g}$ is then given by
\[
    H_j := \frac{E_{2j-1,2j} - E_{2j,2j-1}}{\sqrt{2}} \quad \text{for } j \in [n] \qquad \text{and} \qquad X_{j,k} := \frac{E_{j,k} - E_{k,j}}{\sqrt{2}} \quad \text{for all other } j < k.
\]
We now consider matrices $A,B \in \mathfrak{h}$ given by
\[
    A = \frac{1}{\sqrt{2}}\sum_{j=1}^n a_j H_j \qquad \text{and} \qquad B = \frac{1}{\sqrt{2}}\sum_{j=1}^n b_j H_j
\]
for real $a_j,b_j$.
Equations 24 and 27 of \cite{mcswiggen2018harish} then give the formulas
\[
    \int_{\mathrm{SO}(2n)} e^{-\Tr(AOBO^\top)} dO = \left(\prod_{p=1}^{n-1} (2p)!\right) \frac{\det[\cosh(2a_jb_k)]_{j,k=1}^n + \det[\sinh(2a_jb_k)]_{j,k=1}^n}{\prod_{j<k} (a_k^2 - a_j^2)(b_k^2 - b_j^2)}
\]
and
\[
    \int_{\mathrm{O}(2n)} e^{-\Tr(AOBO^\top)} dO = \left(\prod_{p=1}^{n-1} (2p)!\right) \frac{\det[\cosh(2a_jb_k)]_{j,k=1}^n}{\prod_{j<k} (a_k^2 - a_j^2)(b_k^2 - b_j^2)}
\]
for distinct $a_j,b_j$.

\subsubsection{Odd case}

In this section, we consider the Lie group $\mathrm{SO}(2n+1)$.
We choose the standard maximal torus $T$ given by the block-diagonal matrices of $\mathrm{SO}(2n+1)$ which have $n$ blocks of size $2 \times 2$, followed by a single $1 \times 1$ block.
This corresponds to the Cartan subalgebra $\mathfrak{h}$ consisting of block-diagonal matrices of $\mathfrak{so}(2n+1)$ which have the same block pattern as in $T$.
Note that this means that every matrix in $\mathfrak{h}$ has the property that all entries of the last row and column are 0.
An orthonormal basis for $\mathfrak{h}$ and $\mathfrak{g}$ is then given by
\[
    H_j := \frac{E_{2j-1,2j} - E_{2j,2j-1}}{\sqrt{2}} \quad \text{for } j \in [n] \qquad \text{and} \qquad X_{j,k} := \frac{E_{j,k} - E_{k,j}}{\sqrt{2}} \quad \text{for all other } j < k.
\]
We now consider matrices $A,B \in \mathfrak{h}$ given by
\[
    A = \frac{1}{\sqrt{2}} \sum_{j=1}^n a_j H_j \qquad \text{and} \qquad B = \frac{1}{\sqrt{2}} \sum_{j=1}^n b_j H_j
\]
for real $a_j,b_j$.
Equations 29 and 30 of \cite{mcswiggen2018harish} then give the formula
\[
\begin{split}
    \int_{\mathrm{O}(2n+1)} e^{-\Tr(AOBO^\top)} dO &= \int_{\mathrm{SO}(2n+1)} e^{-\Tr(AOBO^\top)} dO \\
        &= \left(\prod_{p=1}^{n-1} (2p+1)!\right) \frac{\det[\sinh(2a_jb_k)]_{j,k=1}^n}{\prod_{j=1}^n a_jb_j \prod_{j<k} (a_k^2 - a_j^2)(b_k^2 - b_j^2)}
\end{split}
\]
for distinct $a_j,b_j$ all non-zero.

\subsection{Symplectic groups}

The unitary symplectic group $\mathrm{USp}(n)$ is defined as
\[
\mathrm{USp}(n) := \mathrm{U}(2n) \cap \mathrm{Sp}(2n,\C) := \left\{\begin{bmatrix} A & -\overline{B} \\ B & \overline{A} \end{bmatrix} \in \mathrm{U}(2n)\right\},
\]
where $\overline{A}$ denotes the entrywise complex conjugation of $A$.
This is a real compact connected Lie group of dimension $n(2n+1)$.
The associated Lie algebra is given as
\[
    \mathfrak{usp}(n) = \left\{\begin{bmatrix}
        A & -\overline{B} \\ B & \overline{A}
    \end{bmatrix} ~\bigg|~ B=B^\top \text{ and } A=-A^*\right\}.
\]
This Lie algebra has trivial center, and is in fact simple, and so the Killing form is the unique invariant symmetric bilinear form up to scalar.
So it must be the Frobenius inner product.
The fact that $\mathrm{USp}(n)$ is compact implies $B$ is negative semidefinite, but this can also be seen from the fact that $\mathfrak{usp}(n)$ consists solely of skew-Hermitian matrices.

We choose the standard maximal torus $T$ given by the diagonal matrices of $\mathrm{USp}(n)$, which corresponds to the Cartan subalgebra $\mathfrak{h}$ consisting of diagonal matrices of $\mathfrak{usp}(n)$.
An orthonormal basis for $\mathfrak{h}$ is then given by
\[
    H_j := \frac{i(E_{j,j} - E_{n+j,n+j})}{\sqrt{2}} \quad \text{for } j \in [n].
\]
We omit the rest of the orthonormal basis of $\mathfrak{g}$, because it is more complicated and we don't use it further in the computations.
That said, the remaining basis elements are straightforward to derive and can take on a form similar to the other groups if desired.
We now consider matrices $A,B \in \mathfrak{h}$ given by
\[
    A = \sqrt{2} \sum_{j=1}^n a_j H_j \qquad \text{and} \qquad B = \sqrt{2} \sum_{j=1}^n b_j H_j
\]
for real $a_j,b_j$.
(Note that multiplication by $\sqrt{2}$ here is intentional, as compared to the division by $\sqrt{2}$ in the orthogonal case.)
Equation 32 of \cite{mcswiggen2018harish} then gives the formula
\[
    \int_{\mathrm{USp}(n)} e^{-\Tr(ASBS^\top)} dS = \left(\prod_{p=1}^{n-1} (2p+1)!\right) \frac{\det[\sinh(2a_jb_k)]_{j,k=1}^n}{\prod_{j=1}^n a_jb_j \prod_{j<k} (a_k^2 - a_j^2)(b_k^2 - b_j^2)}
\]
for distinct $a_j,b_j$ all non-zero.
Note that this formula is exactly the same as in the odd orthogonal case, though the values of $a_j$ and $b_j$ take on different meanings.

\subsection{Other groups} \label{sec:app_other_groups}

Our main results apply to the case of compact connected groups in general, and so they apply to any such group for which we can compute the above integral.
In particular if $G$ is any product of the above groups, then the integral over $G$ boils down to a product of integrals and therefore a product of determinants.
Further, there exist other compact connected groups than those discussed above: the spin groups which are the covering groups of $\mathrm{SO}(n)$ and the compact forms of the exceptional Lie groups, for example.
Up to finite extensions and finite covers, this in fact exhausts the list of all compact connected (real) Lie groups.
We refer the reader to \cite{mcswiggen2018harish} for further discussion on how one might go about deriving determinantal formulas for integrals over such groups.

Further, in terms of Lie algebras and Weyl groups, the only ones we do not consider here are those associated to the exceptional Lie groups.
There are only finitely many of these, and so they cannot substantially contribute to any asymptotic complexity statement.
These means that in principle we could remove the assumption of a strong counting oracle for our main algorithm in Theorem \ref{thm:lie_algo}, because have given a strong counting oracle for all but finitely many base cases.
This is somewhat outside of the spirit of our determinantal formula counting oracles, and so we omit further exposition.

\section*{Acknowledgments}
The authors would like to thank Colin McSwiggen for useful comments. This research was partially supported by NSF CCF-1908347 grant. 

\bibliographystyle{plain}
\bibliography{references}

\end{document}